\theoremstyle{plain}
\newtheorem*{theorem*}{Theorem}
\newtheorem{theorem}{Theorem}[section] 
\newtheorem{lemma}[theorem]{Lemma}
\newtheorem{proposition}[theorem]{Proposition}
\theoremstyle{definition}
\newtheorem{remark}[theorem]{Remark}
\newtheorem{rhp}{Riemann-Hilbert Problem}[section]
\numberwithin{equation}{section}
\DeclareMathOperator{\diag}{diag}
\renewcommand{\d}{\,\mathrm{d}}
\renewcommand{\i}{\mathrm{i}}
\DeclareMathOperator{\Tr}{Tr}
\newcommand{\lozr}{
	--++(1,1)--++(0,1)--++(-1,-1)
	--++(0,-1)
}
\newcommand{\lozd}{--++(1,1)--++(1,0)--++(-1,-1)--++(-1,0)
}
\newcommand{\lozu}{--++(1,0)--++(0,1)--++(-1,0)--++(0,-1)
}
\author{Tomas Berggren\footnote{Department of Mathematics, 
	Royal Institute of Technology (KTH),
	Stockholm, Sweden. Email: tobergg@kth.se. Supported by the Swedish Research Council grant (VR) Grant no. 2016-05450 and the G\"oran Gustafsson Foundation.} \and Maurice Duits \footnote{Department of Mathematics, 
		Royal Institute of Technology (KTH),
		Stockholm, Sweden. Email: duits@kth.se. Supported by the Swedish Research Council grant (VR) Grant no. 2016-05450 and the G\"oran Gustafsson Foundation.}}
\title{Correlation functions for determinantal processes defined by infinite block Toeplitz minors}
\date{}
\begin{document}
\maketitle	

\begin{abstract}
	We study the correlation functions for determinantal point processes defined by products of infinite minors of block Toeplitz matrices. The motivation for studying such processes comes from doubly periodically weighted  tilings of planar domains, such as the two-periodic Aztec diamond. Our main results are double integral formulas for the correlation kernels. In general, the integrand is a matrix-valued function built out of a factorization of the matrix-valued weight. In concrete examples the factorization can be worked out in detail and we obtain explicit integrands.  In particular,    we find an alternative proof for a formula  for the two-periodic Aztec diamond  recently derived in \cite{DK}. We strongly believe that also in other concrete cases the double integral formulas are good starting points for asymptotic studies. 
\end{abstract}
	\section{Introduction}

We study the correlation functions for determinantal point processes defined by products of (infinite) minors of \emph{block} Toeplitz matrices. This is a natural extension of processes defined by products of \emph{scalar} Toeplitz minors, which is  very  well-studied in the literature. The infinite point system in the scalar case  covers the Schur process introduced in \cite{O,OR1}, one of the gems of integrable probability that includes  a variety of models such as the longest increasing subsequence of a random permutation,  random tilings of planar domains and random growth models. By  definition, the correlation functions for a determinantal point process can be expressed in terms of determinants of matrices constructed out of a function of two variables, called the correlation kernel. For Schur processes, this correlation kernel can be written in terms of double integral formulas with explicitly known integrands. Saddle point methods are thus at our disposal for the asymptotic analysis of  concrete examples.  This has been a large industry in recent years and we do not attempt to provide a full list of works, but point to \cite{BDS,BoGo,J06,J17} and the references therein, for  introductions to this subject and as general references.
	
	An important motivation for studying the extension to block Toeplitz minors comes from random tilings of planar domains or random dimer configurations with doubly-periodic weights, that was discussed in \cite{KO,KOS,NHB}.  The double periodicity in the weight structure naturally leads in many cases to taking minors of block Toeplitz matrices.    Being a natural and non-trivial extension of the scalar case one may therefore expect a richer structure where new phenomena can be discovered. A  remarkable feature for periodically weighted dimer models is that   a so-called gas region \cite{KOS,NHB} may appear. In such a region the 2-point correlations for the height function decay exponentially with the distance.  However, the integrable structure for these models is relatively unexplored and one reason for this is that the standard techniques for the scalar case are  inadequate.  Explicit double integrals for the kernel, even for $n \to \infty$, are not known generally. In fact, to the best of our knowledge, such a double integral formula is only known in case of the two-periodic Aztec diamond. The first results are by Chhita and Young \cite{CY} and Chhita and Johansson \cite{CJ}, who found a machinery for computing the inverse Kasteleyn matrix explicitly and used that to perform an asymptotic analysis.  See also \cite{BCJ} for further results.

 Of special importance to us is  the recent paper \cite{DK}, where one of us together with Kuijlaars took a different approach for the two-periodic Aztec diamond. Starting from the definition of non-intersecting path ensembles with general block Toeplitz transitions we showed that the correlation kernel can be related to matrix valued orthogonal polynomials. A striking feature for the two-periodic Aztec diamond is that the Riemann-Hilbert problem for the matrix-valued orthogonal polynomials can be solved for finite $n$ explicitly, resulting in a double integral formula where the integrand is a $2 \times 2$ matrix-valued function. This seemingly simpler formula than the one in \cite{CJ} is in particular useful for an asymptotic analysis using classical saddle point methods, as shown in \cite{DK}. However, the Riemann-Hilbert problem analysis of the matrix orthogonal polynomials in \cite{DK} is tailored to the two-periodic Aztec diamond and a generalization to other models is far from obvious. In fact, that the Riemann-Hilbert problem can be solved explicitly for finite $n$ came somewhat as a surprise.  The aim of this paper is to present a systematic approach for deriving such double integral formulas, that will lead to a new proof for the double integral formula of \cite{DK} and  can also be applied to other models. 
 
In Section \ref{sec:finite} we introduce a family of extended determinantal point process  defined by products of minors of block Toeplitz matrices. We will also recall families of symbols that are of special importance to us, and lead to totally non-negative block Toeplitz matrices. The symbols that we are interested in come from the non-intersecting paths on directed weighted graphs where the weights obey a periodicity in the vertical direction. The total non-negativity of the symbols has been discussed before in  a more general context in \cite{LP,LP2}. In Section \ref{sec:finite} we also recall the results in \cite{DK} and state how   the correlation kernel for these processes can be represented in a double integral where the integrand contains the Christoffel-Darboux kernel for certain matrix-valued orthogonal polynomials.

In Section \ref{sec:infinite} we then proceed to the main interest of the present paper: the case of that the minors are of infinite size. Just as in the Schur process, one may hope that the correlation structure of infinite systems has  a simpler integrable structure than the finite systems.  Indeed‚ we will show that such a double integral can be found in case we can find a Wiener-Hopf type factorization (of \eqref{eq:WHfact}) of the matrix-valued orthogonality weight. Given such a factorization the asymptotics for the matrix-valued orthogonal polynomials can be computed using a Riemann-Hilbert formulation.

 The existence of the Wiener-Hopf type factorization is a classical and non-trivial matter. There is a vast amount of literature devoted to such factorization results and we do not attempt do give a full overview here, but only refer to \cite{GGH} for a  survey of results. Since our matrix-valued weights are typically rational functions, existence results and even constructive procedures for such factorizations have been discussed in the literature. In fact, our rational matrices are of a particular type, which makes the picture even clearer. Using the notion of whirls and curls and  their commutation relations \cite{LP}, we recall in Section \ref{sec:fact} a general procedure for obtaining the factorizations we are interested in. This procedure is in general still complicated and it would be very interesting to classify the cases where it can be worked out explicitly or simplified. For instance, in Sections \ref{section:example_aztec_diamond} and \ref{section:example_lozenge_tiling}  we will illustrate our main results by  discussing various examples where this procedure can be fully carried out.  
 
 Section \ref{section:example_aztec_diamond} is devoted to the Aztec diamond. We will present a new proof for the formula for the  two-periodic Aztec diamond as found in \cite{DK} and also present an example with a $3\times 2$-periodic weighting of the Aztec diamond. In Section \ref{section:example_lozenge_tiling} we then discuss $2 \times 2$-periodic lozenge tilings of the hexagon where the vertical side is send to infinity. In all these examples we obtain explicit double integral formulas.  We strongly believe that they are  good starting points for asymptotic studies, as was the case in \cite{DK} for the two-periodic Aztec diamond. We plan to return to  these examples in future work. 
 
\subsection*{Acknowledgments}

We are very grateful to Alexei Borodin, Sunil Chhita, Kurt Johansson and Arno Kuijlaars for many inspiring dicussions. In particular, we thank Alexei Borodin for pointing out the references \cite{LP,LP2} to us and  Sunil Chhita and Christophe Charlier for providing us with codes that we used to simulate the random samples of the domino tilings of the Aztec diamond and the lozenge tilings of the hexagon. 
 
 \section{Products of minors of block Toeplitz matrices} \label{sec:finite}
 
 In this section we introduce determinantal processes defined by products of minors of block Toeplitz matrices with a finite number of points. In Section \ref{sec:infinite} we will take a particular limit to define processes with infinite points. 
 
 \subsection{Products of finite block Toeplitz  minors}
 
 	We start by recalling the definition of $p\times p$ block Toeplitz matrices. Let $\phi$ be a $p\times p$ matrix-valued function that has Fourier series expansion
 	$$
 		\phi(z)=\sum_{k=-\infty} ^\infty \hat \phi(k) z^k, \quad  \hat \phi(k) =\frac{1}{2 \pi \i} \oint_{|z|=1} \phi(z)\frac{\d z}{z^{k+1}}.
 		$$ 
 	Then the infinite block Toeplitz matrix associated to $\phi$ is defined as
 \begin{multline} \label{eq:Tmblocktoeplitz}
	 T_\phi(px+r,py+s)= \left(\hat \phi(y-x)\right)_{r+1,s+1}
 	\quad r,s=0,\ldots,p-1, \quad x,y \in \mathbb Z. 	
 \end{multline}	
 In other words, the matrix $T_\phi$ is a block matrix that is constant along the diagonals,
 $$
 	T_\phi= \begin{pmatrix} \ddots & \ddots& \ddots&\ddots  \\
 		\ddots & \hat \phi_0  &  \hat \phi_1  & \hat \phi_2&\ddots  \\
 		\ddots & \hat \phi_{-1} & \hat \phi_0  &  \hat \phi_1  & \hat \phi_2&\ddots  \\
	 	\ddots & \hat \phi_{-2} & \hat \phi_{-1} &  \hat \phi_0  & \hat \phi_1  & \hat \phi_2&  \ddots  \\
		&\ddots &\ddots & \ddots & \ddots & \ddots& \ddots \\
 \end{pmatrix},
 $$
 and the blocks along each diagonal are given by the Fourier coefficients of the matrix-valued symbol $\phi$. 
 
 In this paper, the entries  of $\phi$ will always be rational functions such that neither $\phi$ nor $\phi^{-1}$ has poles on the unit circle. We also define our matrices so that they have entries in $\mathbb Z \times \mathbb Z$. In the literature, such double infinite Toeplitz matrices are also called Laurent matrices, e.g. \cite{BG}.

 Fix parameters $p,n,N \in \mathbb N$ and consider discrete variables $((m,u_m^j))_{m=1,j=1}^{N-1,pn} \subset \{0,\ldots,N\}\times  \mathbb Z$  taken randomly from the measure
\begin{equation}\label{eq:productofdeterminants}
		\frac{1}{Z_{n,N}}\prod_{m=1}^N \det \left( T_{\phi_m}(u_{m-1}^j,u_{m}^k)\right)_{j,k=1}^{pn},
\end{equation}
where $T_{\phi_m}$ are $p\times p$ block Toeplitz matrices with symbols $\phi_m$.  	The points $u_0^j$ and $u_N^j$ will be fixed and we take them consecutive. More precisely, fix an additional parameter $M \in \mathbb Z$ and set
\begin{equation}\label{eq:endpoints}
		u_0^j=j-1, \quad u_N^j=pM+j-1, \qquad j=1,\ldots,pn.
\end{equation}
We will typically write $u=px+r$ with $0\leq r \leq p-1$.

To ensure that \eqref{eq:productofdeterminants} is indeed a positive measure we will insist that the symbols $\phi_m$ are such that all possible minors of the block Toeplitz matrices are non-negative. In other words, the block Toeplitz matrices are totally non-negative matrices. The classification of totally non-negative Toeplitz matrices is a classical problem. In the scalar case this is given by the  Thoma-Edrei Theorem \cite{Ed,Th} but also  the matrix case  has been discussed in the literature in recent years \cite{LP,LP2}. We return to this later on. 
	
The normalizing constant $Z_{n,N}$ can be computed to be a block Toeplitz minor itself. First note, since we have doubly infinite Toeplitz matrices, that $T_\phi T_\psi= T_{\phi \psi}$ for any two symbols $\phi$ and $\psi$. By the Cauchy-Binet identity, see e.g. \cite{J06}, we therefore find 	
		$$
			Z_{n,N}=  ((np)!)^{N-1} \det \left(T_{\phi} (u_0^j,u_N^k)\right)_{j,k=1}^{pn},
		$$
	where 
 	\begin{equation}\label{eq:defphi}
 		\phi(z)= \prod_{m=1}^N \phi_m(z).
 	\end{equation}
	For the same reasons, the marginal densities for the points at the $m$-th section can be written as the product of two determinants
		$$
			\frac{((np)!)^{N-2}}{Z_{n,N}}  \det \left( T_{\phi_{0,m}}(u_0^j,u_{m}^k)\right)_{j,k=1}^{pn} \cdot  \det \left( T_{\phi_{m,N}}(u_{m}^j,u_{N}^k)\right)_{j,k=1}^{pn}.
		$$
	Here, and from now on, we use the notation 
	\begin{equation} \label{eq:defphikl}
		\phi_{k,\ell}(z)= \prod_{m=k+1}^\ell	\phi_m(z).
	\end{equation}
	Note that the normalizing constant did not change and is independent of $m$.  Processes that can be written as products of two determinants are called biorthogonal ensembles \cite{Bbio} and are a special class of determinantal point processes. The point process defined by \eqref{eq:productofdeterminants} is an extended biorthogonal ensemble. We refer to \cite{B,J06} (and the references therein) for more background on determinantal point processes and their properties.

	\begin{figure}[t]
		\begin{center}
		\begin{tikzpicture}[scale=.6]
	\tikzset{->-/.style={decoration={
				markings,
				mark=at position .5 with {\arrow{stealth}}},postaction={decorate}}}

			\draw[<->] (-2,7)--(-2,6)--(-1,6);
			\draw (-2,7) node[left] {$u$};
				\draw (-1,6) node[below] {$m$};
				
					\draw (1,0) node[below] {$m=0$};
						\draw (5,0) node[below] {$m=N$};
			\draw[dashed,red] (1,0)--(1,8);			
			\draw[dashed,red] (5,0)--(5,8);			
			
	\foreach \x in {0,1,...,5}
	{\draw[->-] (\x,7)--(\x+1,7);
		\foreach \y in {0,1,...,6}
		{\draw[->-] (\x,\y)--(\x+1,\y+1);
			\draw[->-] (\x,\y)--(\x+1,\y);
	}}
	
	\foreach \y in {1,2,3,4}
	{\draw (1,\y) node[circle,fill,inner sep=2.5pt]{};
		\draw (5,\y+2) node[circle,fill,inner sep=2.5pt]{};}

	\foreach \y in {0,1,...,6}
	{\draw (0,\y)--(6,\y);
		\foreach \x in {0,1,...,5}
		{\draw (\x,\y)--(\x+1,\y+1);
	}}
	{\draw (0,7)--(6,7);}
	\foreach \y in {1,2,3,4}
	{\draw (1,\y) node[circle,fill,inner sep=2pt]{};
		\draw (5,\y+2) node[circle,fill,inner sep=2pt]{};}
	\foreach \x/\y in {1/1, 1/2, 2/1, 2/4, 3/6, 4/4, 4/5, 4/6}
	{\draw[line width=1mm] (\x,\y)--(\x+1,\y);
	\draw (\x+1,\y)  node[circle,fill,inner sep=2.5pt]{};	}
	\foreach \x/\y in {1/3, 1/4, 2/2,2/5, 3/1, 3/3, 3/4, 4/2}
	{\draw[line width=1mm] (\x,\y)--(\x+1,\y+1);
		\draw (\x+1,\y+1)  node[circle,fill,inner sep=2.5pt]{};
	}

	\end{tikzpicture}\qquad 
		 \begin{tikzpicture}[scale=0.5]
		 \tikzset{->-/.style={decoration={
		 			markings,
		 			mark=at position .5 with {\arrow{stealth}}},postaction={decorate}}}
	 			
	 				\draw (1,-4.5) node[below] {$m=0$};
	 			\draw (9,-4.5) node[below] {$m=N$};
	 			\draw[dashed,red] (1,-4.5)--(1,5);			
	 			
				\draw[ dashed,red] (1,-4.5)--(1,5);			
	\draw[help lines, dashed,red] (9,-4.5)--(9,5);			
		 \foreach \y in {-4,-3,-2,-1,0,1,2,3,4}
		 {
		 	\foreach \x in {0,1,2,3,4,5,6,7,8,9}
		 	{\draw[->,>=stealth] (\x-.5,\y)--(\x+.5,\y);
		 		\draw (9.5,\y)--(10.5,\y);
		 }}
		 \foreach \x in {0,2,4,6,8,10}
		 {\foreach \y in {-4,-3,-2,-1,0,1,2,3}
		 	{\draw[-<,>=stealth] (\x,\y-.5)--(\x,\y+.5);
		 		\draw (\x,3.5)--(\x,4.5);}}
		 \foreach \y in {-3,-2,-1,0,1,2,3,4}
		 {\foreach \x in {0,2,4,6,8}
		 	\draw [->-](\x+1,\y)to(\x+2,\y-1);}
	
		 \foreach \y in {0,1,2,3}
		 {\draw (1,\y) node[circle,fill,inner sep=2pt]{};
		 	\draw (9,\y-4) node[circle,fill,inner sep=2pt]{};}
	 	{\foreach \x/\y in  {1/3,1/2,1/1,2/3,2/2,2/0,3/3,2/-3,3/-3,4/3,4/1,4/-1,5/-4,6/-4,7/-4,8/-4,6/-3,7/-3,8/-3,5/3,6/3,8/-1,8/-2,7/3,5/1,6/-1 } 
	 				{\draw[line width=.8mm] (\x,\y)--(\x+1,\y);
 					\draw (\x+1,\y) node[circle,fill,inner sep=2.5pt]{};}} 
 				
 			{\foreach \x/\y in  {1/0,3/2,3/0,4/-3,5/-1,7/-1 } 
{ 			\draw[line width=.8mm] (\x,\y)--(\x+1,\y-1);
 			\draw (\x+1,\y-1) node[circle,fill,inner sep=2.5pt]{};}} 
 		
 			{\foreach \x/\y in  {2/1, 2/-1,2/-2,6/-2,8/3,8/2,8/1,8/0,6/1,6/0 } 
 				 			\draw[line width=.8mm] (\x,\y)--(\x,\y-1);
 			}	
	
		 \end{tikzpicture}
		   \caption{Two examples of point configurations $\{(m,u_m^j)\}_{m=0,j=1}^{N,pn}$. At each vertical line we have $pn$   points. The points at $m=0$ and $m=N$ are fixed and at placed at consecutive integers but with a shift of $pM\in p\mathbb Z$ at the end. In these examples, the points lie on non-intersecting paths placed on an underlying directed graph. }
		\end{center}
		
	\end{figure}
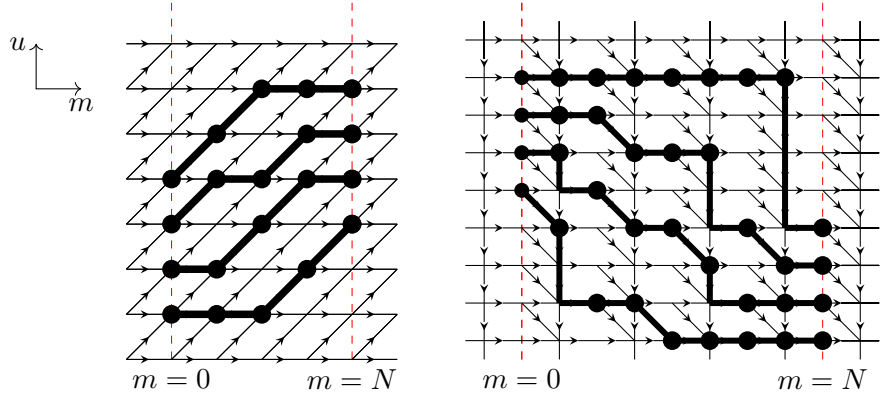
		Point processes defined by \eqref{eq:productofdeterminants} arise in  ensembles of non-intersecting paths. The construction is standard and discussed, for example, in \cite{J06}.  In Sections \ref{section:example_aztec_diamond} and \ref{section:example_lozenge_tiling} we will see two classical examples, related to  domino tilings of the Aztec diamond and lozenge tilings of the hexagon. In both examples, we are given a directed weighted graph with no cycles and with vertices in $\{0,1,\ldots, N\}\times \mathbb Z$. The weights are on the edges. If two vertices are connected by a path, then  we define the weight of that path to be the product of the weights of the edges that form the path. We also assign a weight to a collection of paths by taking the product of the weights of individual paths.   Then we can  consider the space of collections of $np$  paths that  start at $j-1$ and end at $pM+j-1$ for $j=1, \ldots, pn$ and are conditioned never to intersect.    Each such collection of paths has a weight and by normalizing this weight by the sum of all weights of possible collections we obtain a probability measure on the space of non-intersecting paths.  Then the Lindstr\"om-Gessel-Viennot Theorem  \cite{GV,L} says that the vertical positions of the paths, with   $u_m^j$ denoting the vertical position of  the $j$-th path at level $m$, then have a joint probability distribution given by \eqref{eq:productofdeterminants}. The matrices $T_m$ are determined by the weights on the edges of the graph. If these weights have a periodic structure in the vertical direction, then the matrices will indeed be block Toeplitz matrices. We will see explicit examples in the next paragraph.

	\begin{remark}
		Without loss of generality, we can always take the parameter $M=0$. Indeed, one can always add a trivial  last step by taking $T_{N+1}(x,y)=1$ if $y-x=M$ and $0$  otherwise. This is however slightly artificial and in many concrete examples it is more natural to include a general parameter~$M$.   
	\end{remark}

	\subsection{Certain symbols for totally non-negative block Toeplitz matrices}
	
  We will now present an overview of symbols that will lead to totally non-negative Toeplitz matrices, that will be coming from ensembles of non-intersecting paths on directed weighted graphs.  It will be convenient to separate the cases $p=1$, $p=2$ and the general case $p\geq 2$.

	In the scalar case $p=1$  the block matrices reduces to scalar infinite Toeplitz matrices 
\begin{equation} \label{eq:Tmtoeplitz}
T_{\phi_m} (x,y)=\hat \phi_m(y-x)
=\frac{1}{2 \pi \i} \oint_{|z|=1} \phi_m(z) \frac{\d z}{z^{y-x+1}}, \qquad  x,y \in \mathbb Z.
\end{equation}	
This means that the jump probability $T_{\phi_m}(x,y)$ to go from $x$ to $y$ depends  only on the value $y-x$ of the jump  and is driven by the same distribution for all starting points. The symbol $\phi_m$ for the Toeplitz matrix is the generating function for this distribution. Relevant symbols for $p=1$ are
\begin{equation}\label{eq:bernoulliup}
\phi^{b,\uparrow}(z)= 1+a z,
\end{equation}
\begin{equation}\label{eq:bernoullidown}
\phi^{b,\downarrow}(z)= 1+a /z,
\end{equation}
\begin{equation}\label{eq:geometricup}
\phi^{g,\uparrow} (z)= \frac{1}{1-a z},
\end{equation}
and  \begin{equation}\label{eq:geometricdown}
\phi^{g,\downarrow}(z)= \frac{1}{1-a/ z}.
\end{equation}
In \eqref{eq:geometricup} and \eqref{eq:geometricdown} we assume $0<a<1$, but only $a>0$ in \eqref{eq:bernoulliup} and \eqref{eq:bernoullidown}. The transition matrices with symbols \eqref{eq:bernoulliup} and \eqref{eq:bernoullidown} correspond to random  Bernoulli steps and \eqref{eq:geometricup} and \eqref{eq:geometricdown} to geometric steps up and down respectively. 

It can be shown that these choices for $\phi_m$ give rise to totally non-negative Toeplitz matrices. In fact, the Edrei-Thoma Theorem \cite{Ed,Th} says that all totally positive Toeplitz matrices have a symbol that can be written, up to an additional exponential factor,  as a product of terms as in \eqref{eq:bernoulliup}--\eqref{eq:geometricdown}.

\begin{figure}[t]
	\begin{center}
		\begin{tikzpicture}[scale=.9]
	\tikzset{->-/.style={decoration={
				markings,
				mark=at position .5 with {\arrow{stealth}}},postaction={decorate}}}
	\foreach \y in {-4,-3,-2,-1,0,1,2,3,4}
		{\filldraw (0,\y) circle(2pt);
		\filldraw (1,\y) circle(2pt);\draw[->,>=stealth] (0,\y)--(.5,\y);\draw[-] (0,\y)--(1,\y);}
			\foreach \y in {-4,-3,-2,-1,0,1,2,3,4}
			{\draw[->,>=stealth] (0,\y)--(.5,\y+.5);\draw[-] (0,\y)--(1,\y+1);}	\draw (.5,5) node  {{$\vdots$}};
			\draw (.5,-5) node  {{$\vdots$}};

	\draw (0.75,-2) node[above] {\tiny{$b_1$}};
\draw (0.75,-1) node[above] {\tiny{$b_2$}};
\draw (0.75,0) node[above] {\tiny{$b_3$}};
\draw (0,-2.5) node  {\tiny{$a_0b_1$}};
\draw (0,-1.5) node  {\tiny{$a_1b_2$}};
\draw (0,-.5) node {\tiny{$a_2b_3$}};

\draw (0.75,3) node[above] {\tiny{$b_{0}$}};
\draw (0,3.5) node {\tiny{$a_{0}b_1$}};
\draw (0.75,4) node[above] {\tiny{$b_1$}};
\draw (0,2.5) node  {\tiny{$a_{p-1}b_0$}};
\draw (0.75,2) node[above] {\tiny{$b_{p-1}$}};

\draw (0,1.5) node {\tiny{$\vdots$}};
\draw (0,.5) node  {\tiny{$\vdots$}};
	\draw (.5,-6) node  {{$\phi^{b,\uparrow}$}};
			\end{tikzpicture}\quad\quad\quad\quad
				\begin{tikzpicture}[scale=.9]
			\tikzset{->-/.style={decoration={
						markings,
						mark=at position .5 with {\arrow{stealth}}},postaction={decorate}}}
			\foreach \y in {-4,-3,-2,-1,0,1,2,3,4}
			{\filldraw (0,\y) circle(2pt);
				\filldraw (1,\y) circle(2pt);\draw[->,>=stealth] (0,\y)--(.5,\y);\draw[-] (0,\y)--(1,\y);}
			\foreach \y in {-4,-3,-2,-1,0,1,2,3,4}
			{\draw[->,>=stealth] (0,\y)--(.5,\y-.5);\draw[-] (0,\y)--(1,\y-1);}	\draw (.5,5) node  {{$\vdots$}};
			\draw (.5,-5) node  {{$\vdots$}};
				\draw (0.257,-2) node[above] {\tiny{$b_1$}};
			\draw (0.25,-1) node[above] {\tiny{$b_2$}};
			\draw (0.25,0) node[above] {\tiny{$b_3$}};
			\draw (1,-2.5) node  {\tiny{$a_0b_0$}};
			\draw (1,-1.5) node  {\tiny{$a_1b_1$}};
			\draw (1,-.5) node {\tiny{$a_2b_2$}};
			
			\draw (0.25,3) node[above] {\tiny{$b_{0}$}};
			\draw (1,3.5) node {\tiny{$a_{0}b_0$}};
			\draw (0.25,4) node[above] {\tiny{$b_1$}};
			\draw (1,2.5) node  {\tiny{$a_{p-1}b_{p-1}$}};
			\draw (0.25,2) node[above] {\tiny{$b_{p-1}$}};
			
			\draw (1,1.5) node {\tiny{$\vdots$}};
			\draw (1,.5) node  {\tiny{$\vdots$}};
			\draw (.5,-6) node  {{$\phi^{b,\downarrow}$}};
			\end{tikzpicture}\quad\quad
			\begin{tikzpicture}[scale=.9]
			\tikzset{->-/.style={decoration={
						markings,
						mark=at position .5 with {\arrow{stealth}}},postaction={decorate}}}
			\foreach \y in {-4,-3,-2,-1,0,1,2,3,4}
			{\filldraw (0,\y) circle(2pt);
				\filldraw (1,\y) circle(2pt);\draw[->,>=stealth] (0,\y)--(.5,\y);\draw[-] (0,\y)--(1,\y);}
			\foreach \y in {-4,-3,-2,-1,0,1,2,3,4}
			{\draw[->,>=stealth] (0,\y)--(0,\y-0.5);\draw[-] (0,\y)--(0,\y-1);}
			\draw (0.5,4) node[above] {\tiny{$b_1$}};
			\draw (0.5,3) node[above] {\tiny{$b_0$}};
			\draw (0.5,2) node[above] {\tiny{$b_{p-1}$}};
				\draw (0,3.5) node[left]  {\tiny{$a_0$}};
			\draw (0,2.5) node[left]  {\tiny{$a_{p-1}$}};
			\draw (0,1.5) node[left] {\tiny{$a_{p-2}$}};
			
				\draw (0.5,-1) node[above] {\tiny{$b_{1}$}};
			\draw (0,-1.5) node[left]  {\tiny{$a_{0}$}};
			\draw (0.5,-2) node[above] {\tiny{$b_0$}};
			\draw (0,-2.5) node[left]  {\tiny{$a_{p-1}$}};
				\draw (0.5,0) node[above] {\tiny{$b_{2}$}};
			
			\draw (0,-.5) node[left]  {\tiny{$a_1$}};
				\draw (-0.2,.5) node[left]  {\tiny{$\vdots$}};
					\draw (.5,5) node  {{$\vdots$}};
				\draw (.5,-5) node  {{$\vdots$}};
				\draw (.5,-6) node  {{$\phi^{g,\downarrow}$}};
			\end{tikzpicture}\quad\quad
				\begin{tikzpicture}[scale=.9]
			\tikzset{->-/.style={decoration={
						markings,
						mark=at position .5 with {\arrow{stealth}}},postaction={decorate}}}
			\foreach \y in {-4,-3,-2,-1,0,1,2,3,4}
			{\filldraw (0,\y) circle(2pt);
				\filldraw (1,\y) circle(2pt);
				\draw[->,>=stealth] (0,\y)--(.5,\y);\draw[-] (0,\y)--(1,\y);}
			\foreach \y in {-4,-3,-2,-1,0,1,2,3,4}
			{\draw[->,>=stealth] (0,\y)--(0,\y+.5);\draw[-] (0,\y)--(0,\y+1);}	
			
			\draw (.5,5) node  {{$\vdots$}};
			\draw (.5,-5) node  {{$\vdots$}};

				\draw (0.5,-2) node[above] {\tiny{$b_0$}};
			\draw (0.5,-1) node[above] {\tiny{$b_1$}};
			\draw (0.5,0) node[above] {\tiny{$b_{2}$}};
			\draw (0,-2.5) node[left]  {\tiny{$a_{p-1}$}};
			\draw (0,-1.5) node[left]  {\tiny{$a_0$}};
			\draw (0,-.5) node[left] {\tiny{$a_1$}};
				\draw (0.5,2) node[above] {\tiny{$b_{p-1}$}};
			\draw (0.5,3) node[above] {\tiny{$b_{0}$}};
			\draw (0,3.5) node[left]  {\tiny{$a_{0}$}};
			\draw (0.5,4) node[above] {\tiny{$b_1$}};
			\draw (0,2.5) node[left]  {\tiny{$a_{p-1}$}};
			
			\draw (0,1.5) node[left]  {\tiny{$a_{p-2}$}};
			\draw (-0.2,.5) node[left]  {\tiny{$\vdots$}};
				\draw (.5,-6) node  {{$\phi^{g,\uparrow}$}};
			\end{tikzpicture}
			\end{center}
		\caption{The four different directed weighted graphs that correspond to \eqref{eq:matrixjump1}-- \eqref{eq:matrixjump4}.}
		\label{fig:differentstep}
	\end{figure}
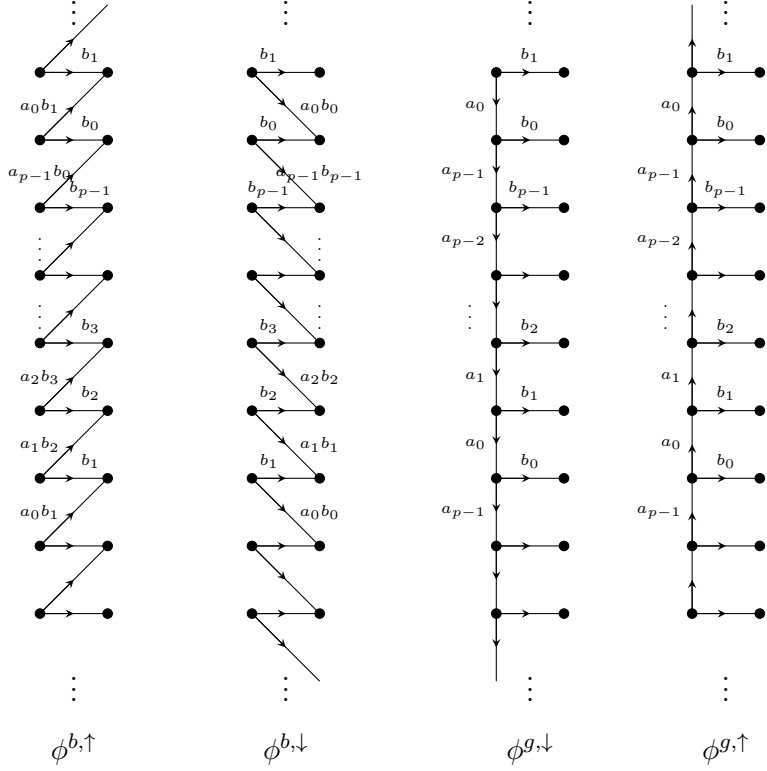

	For $p>1$, we see that the jump not only depends on the value of the jump, but also depends on a modular arithmetic.  Indeed, if $p=2$, we find different distribution for the jump depending on the parity of $x$  and $y$. It turns out that for $p\geq 2$ there are natural analogues of \eqref{eq:bernoulliup} -- \eqref{eq:geometricdown} and it is these analogues that we will be mostly interested in. For $p=2$ they are given by 
	
	\begin{equation}\label{eq:p2bernoulliup}
	\phi^{b,\uparrow}(z)= \begin{pmatrix} 
	b_0 & b_1 a_0\\
	a_1  b_0z & b_1,
	\end{pmatrix}
	\end{equation}
	\begin{equation}\label{eq:p2bernoullidown}
	\phi^{b,\downarrow}(z)= \begin{pmatrix} 
	b_0 & b_1 a_1/z\\
	b_0 a_0&  b_1
	\end{pmatrix},
	\end{equation}
	\begin{equation}\label{eq:p2geometricup}
	\phi^{g,\uparrow} (z)=\frac{1}{1-a_0a_1 z}\begin{pmatrix} 
	b_0 & b_1 a_0\\
	a_1  b_0z & b_1
	\end{pmatrix},
	\end{equation}
	and  \begin{equation}\label{eq:p2geometricdown}
	\phi^{g,\downarrow}(z)= \frac{1}{1-a_0 a_1 /z} \begin{pmatrix} 
	b_0 & a_1 b_1  /z\\
	a_0b_0 &	  b_1
	\end{pmatrix},
	\end{equation}
	where in each case $a_0,a_1,b_0,b_1>0$  and, additionally, $a_0 a_1 <1$ for \eqref{eq:p2geometricup} and  \eqref{eq:p2geometricdown}.

	We now turn to the general case $p\geq 2$.   Let $ {\bf a}=(a_0,\cdots,a_{p-1}) $ be a $p-$tuple of positive parameters and set  $ a = \prod_{i=0}^{p-1}a_i $, let
\begin{equation}\label{eq:defMa}
M(z;{\bf a})= 
\begin{pmatrix}
1 & a_0 & \cdots & 0  \\
0 & 1 & \cdots & 0  \\
\vdots & \vdots & \ddots & \cdots \\
0 & 0 & \cdots & a_{p-2} \\
a_{p-1} z & 0 & \cdots & 1
\end{pmatrix},
\end{equation}
and
\begin{equation}\label{eq:defNa}
N(z;{\bf a})= \frac{1}{1-az}
\begin{pmatrix}
1 & a_0 & \cdots & \prod_{i=0}^{p-2}a_i  \\
\prod_{i=1}^{p-1}a_iz & 1 & \cdots & \prod_{i=1}^{p-2}a_i  \\
\vdots & \vdots & \ddots & \cdots \\
a_{p-2}a_{p-1}z & a_{p-2}a_{p-1}a_0z & \cdots & a_{p-2} \\
a_{p-1}z & a_{p-1}a_0z & \cdots & 1
\end{pmatrix},
\end{equation}
where we assume that $a<1$ in \eqref{eq:defNa}. For a $ p $-tuple of positive parameters $ {\bf b}=(b_0,\cdots,b_{p-1} )$ we set
\begin{equation}
B({\bf b}) = 
\begin{pmatrix}
b_0 & 0 & \cdots & 0 \\
0 & b_1 & \cdots & 0 \\
\vdots & \vdots & \ddots & \cdots \\
0 & 0 & \cdots & b_{p-1}
\end{pmatrix},
\end{equation}

	We say that a $ p\times p $ matrix is the transition matrix for a $ p $-periodic Bernoulli step up respectively down, if it is of the form
	\begin{equation}\label{eq:matrixjump1}
\phi^{b,\uparrow}(z; {\bf a}, {\bf b})=	M(z;{\bf a})B({\bf b}),
	\end{equation}
	respectively
	\begin{equation}\label{eq:matrixjump2}
\phi^{b,\downarrow}(z;{\bf a}, {\bf b})=	M(1/z;{\bf a})^T B({\bf b}).
	\end{equation}
	Similarly we say that a $ p\times p $ matrix is the transition matrix for a $ p $-periodic geometric step up respectively down, if it is of the form
	\begin{equation}\label{eq:matrixjump3}
\phi^{g,\uparrow}(z;{\bf a},{\bf b})=	N(z; {\bf a})B({\bf b}),
	\end{equation}
	respectively
	\begin{equation}\label{eq:matrixjump4}
	\phi^{g,\downarrow}(z;{\bf a},{\bf b})=	N(1/z; {\bf a})^TB({\bf b}).
	\end{equation}
To explain our terminology, we refer to Figure \ref{fig:differentstep}. In that figure, we plotted four directed graphs with weights on the edges. The matrix entry $T_\phi(px+r,py+s)$ where $\phi$ is given by the corresponding expressions \eqref{eq:matrixjump1}, \eqref{eq:matrixjump2}, \eqref{eq:matrixjump3} and \eqref{eq:matrixjump4} then consists of the weight of the unique path connecting $(m,px+r)$ to $(m+1,py+s)$. 

Remarkably, these matrices occur in the characterization of totally non-negative block Toeplitz matrices as discussed in \cite{LP,LP2}. The matrices \eqref{eq:defMa} and \eqref{eq:defNa} are called whirls respectively curls in \cite{LP}. It was proved in \cite{LP} that these symbols lead to total positive block Toeplitz matrices. Thus if each $\phi_m$  is one of these four, the product \eqref{eq:productofdeterminants} defines indeed a probability measure. This is consistent with our picture of non-intersecting paths. In the general situation, we can construct a directed graph on $\{0,\ldots, N\} \times \mathbb Z$ by glueing graphs as indicated in Figure \ref{fig:differentstep}, so that the restriction of the graph to $\{m,m+1\} \times \mathbb Z$ is one of the four types shown. The non-intersecting path model, then leads naturally to the probability measure \eqref{eq:productofdeterminants}.  

\subsection{Doubly periodic models}

As mentioned in the beginning, the fact that we have block Toeplitz matrices, means that there is a periodicity in the vertical direction. We can also obtain a periodicity in the horizontal direction by insisting that for some $q \in \mathbb N$  we have $\phi_{m+q}=\phi_m$   for all $m$. In that case, the model is $q$-periodic in the horizontal direction. The examples of Section \ref{section:example_aztec_diamond} and \ref{section:example_lozenge_tiling}  are of this type.  For such models it is also convenient to replace $N$ by $qN$ such that \eqref{eq:defphi} can be written as 
\begin{equation}\label{eq:phidoubleper}
	\phi(z)= (\Phi(z))^{N}, \qquad \text{with } \quad \Phi(z)= \prod_{m=1}^q \phi_m(z).
	\end{equation}
This structure can have certain advantages when studying the asymptotic behavior, as $N\to \infty$ (which will  not be the focus of this paper). In the scalar case $p=1$, periodic Schur processes have been studied in the literature before \cite{BMRT,M}, by means of steepest descent techniques on the double integral representation of the correlation kernels.

\subsection{Determinantal point processes}

	The Eynard-Mehta Theorem \cite{EM}, see also \cite{B,BR,J06} tells us that  the model \eqref{eq:productofdeterminants} with $T_{\phi_m}$ as in \eqref{eq:Tmblocktoeplitz} defines a determinantal point process.  By definition, this means that there exists a $K_{n}$ such that 
		\begin{equation}\label{eq:Determinantal}
		\mathbb P \left( \text{ points at } (m_1,u_1), \ldots, (m_k,u_k)\right) = \det \left(K_{n}(m_i,u_i;m_j,u_j)\right)_{i,j=1}^k.
		\end{equation}
		One of the key properties of determinantal point processes is that all  information on the point process is thus encoded in the kernel $K_{n}$ and the quantities of interest can be expressed in terms of $K_n$. This will be particularly useful for asymptotic analysis, since we will only need to study the kernel asymptotically as $n \to \infty$. Of course, for this approach to work we need a good control of the kernel for finite  $n$. 
		 
  One of the key observations in \cite{DK} is that the kernel can be expressed in terms of matrix-valued orthogonal polynomials. More precisely, 
\begin{multline}\label{eq:finitekernel}
\left[K_{n}(m,px+i;m',px'+j) \right]_{i,j=0}	^{p-1}=-\frac{\chi_{m>m'}}{2 \pi \i} \oint_{|	z|=1}  \phi_{m',m}(z) 	\frac{\d z}{z^{x-x'+1}} \\
+\frac{1}{(2\pi \i)^2} \oint_{|w|=1} \oint_{|	z|=1} \phi_{m',N}(w) R_n(z,w) 	\phi_{0,m}(z)\frac{w^{x'}}	{z^{x+1}} \frac{\d z \d w}{w^{M+n}},
\end{multline}
where we recall the notation \eqref{eq:defphikl} and $R_n(z,w)$ is  the unique bivariate polynomial of degree $\leq n-1$ in both $z$ and $w$ such that 
$$
\frac{1}{2 \pi \i }\oint_{|w|=1} P(w) \phi(w)  R_n(z,w) \frac{\d w}{w^{M+n}} = P(z),
$$
for every matrix-valued polynomial $P$ of degree $\leq n-1$. In other words, it is the reproducing kernel for the space of matrix-valued polynomials of degree $\leq n-1$ corresponding to the inner product
	$$
		\langle P,Q\rangle=	\frac {1}{2 \pi \i} \oint_{|z|=1} P(z)\phi(z) Q(z)^T \frac{\d z}{z^{M+n}}.
	$$
It is important to note that this a non-hermitian inner product with a complex weight. It is therefore not obvious that $R_n(z,w)$ is well-defined and we refer to \cite[Sec 4]{DK} for a  detailed discussion.

One way of constructing $R_n(z,w)$ more explicitly is the following:
 Start with the matrix $G$ defined as the $pn \times pn$ block matrix with block of size $p\times p$ given by \footnote{Our convention for $G$ and the one in \cite{DK} differ by reordering the columns, i.e. $G_{ij}$ here is $G_{i (n-j+1)}$ in \cite{DK} for $j=1, \ldots,n$.}
	$$
		G=\left(\frac{1}{2  \pi \i}  \oint_{|z|=1} z^{j+i-2}\phi(z) \frac{\d z}{z^{M+n}} \right)_{i,j=1}^n.
	$$ 
Note that  $\det G= (-1)^{n} Z_{n,N}$ and thus $G$ is invertible. Now take a  factorization  $G=LU$ (for now, we do not pose restrictions on $L$ and $U$, but later we will take $L$ and $U$ to be lower and upper triangular respectively). Since $G$ is invertible also $L$   and   $U$ are invertible and we can define
	\begin{equation}  \label{eq:cdkernel1}
		R_n(z,w)=  \sum_{j=0}^{n-1} Q_j(w)^TP_j(z),
	\end{equation}
where 
	$$
		P_{j-1}(z)= \sum_{k=1}^{n} (L^{-1})_{jk} z^{k-1}\text{ and } 
		Q_{j-1}(w)= \sum_{k=1}^n \left((U^{-1})_{kj} \right)^Tw^{k-1},
	$$
where $(L^{-1})_{jk}$  and $(U^{-1})_{kj}$   denote the $jk$-th and $kj$-th block of $L^{-1}$ and $U^{-1}$ respectively. It is easy to verify \cite[Prop. 4.5 ]{DK} that 
	$$
		\frac {1}{2 \pi \i} \oint_{|z|=1} P_j(z)\phi(z) Q_\ell(z)^T \frac{\d z}{z^{M+n}} = \delta_{j,\ell} I_    p. 
	$$
In other words, the $P_j$ and $Q_\ell$ form a biorthogonal family of matrix-valued polynomials with respect to the inner product. As it turns out, the polynomials $P_j$ and $Q_\ell$ depend on the choice of factorization $G=LU$, but the function $R_n(z,w)$ does not and is indeed unique \cite[Lem 4.6 a)]{DK}. 

The above picture simplifies if there exist a factorization $G= LU$ with  $U=H L^T$, where $L$ is a block lower triangular matrix with unit elements on the diagonal and  $H= \diag (H_1,\ldots,H_N)$ is a block diagonal matrix.  Then the triangularity implies that $P_j$ and $Q_\ell$ are polynomials of degree $j$ and $\ell$ respectively.  Moreover, $P_j(z)=p_j(z)$ and $Q_\ell(z)= (H_{\ell+1}^{-1})^T p_{\ell}(z)$   where   $p_j(z)=z^jI_p+\ldots $ is a  matrix-valued polynomial of degree $j$ satisfying 
\begin{equation} \label{eq:orthogonality}
	\frac{1}{2 \pi \i}\oint_{|z|=1} p_j(z) z^k\phi(z)\frac{\d z}{z^{M+n}} =0_p, \qquad k=0,1,\ldots, j-1,
\end{equation}
and $H_{j+1}= \langle p_j,p_j\rangle$. The existence of such polynomials, and/or the existence of the factorization $G=LH L^T$  is not guaranteed. In fact, it may well happen in particular cases that one of the  polynomial for some index $j$ does not exist.  However,  given  the existence, $R_n$ takes the simpler form
\begin{equation}  \label{eq:finitekernel1}
	R_n(z,w)= \sum_{j=0}^{n-1} p_j(w)^T H_{j+1}^{-1} p_{j}(z).
\end{equation}

Although the general existence of $ p_j $ can not be guaranteed, one can  prove \cite[Lem 4.8]{DK} that the   polynomial of the special degree $n$ always exist, based on the fact that $Z_{n,N}>0$.  Moreover, the orthogonal polynomial of that degree  can be characterized by a Riemann-Hilbert problem, similar to the Riemann-Hilbert problem for scalar orthogonal polynomials \cite{FIK}. As we will recall in Section \ref{sec:infinite}, this Riemann-Hilbert problem also characterizes $R_n(z,w)$. We thus arrive at the core of our approach proposed in \cite{DK}. The asymptotic behavior of  Riemann-Hilbert problems for orthogonal polynomials have been studied intensively in the literature after the groundbreaking works \cite{DKMVZ1,DKMVZ2}. One may thus try to apply and extend these techniques to find the asymptotic behavior of $R_n(z,w)$ and use that asymptotic study as a basis to perform a classical steepest descent technique on the double integral formula \eqref{eq:finitekernel}.  The message of this paper is that a relatively simple analysis of the Riemann-Hilbert problem can be performed in case of infinite collections of paths as we will describe in the next section. 

	\section{Infinite systems and their correlation functions}\label{sec:infinite}
In this section we come to the main point of our paper and discuss the model with an infinite collection of paths, $n \to \infty$. Note that by simply taking $n \to \infty$ we run into convergence questions. However, given the fact that the process is determinantal \eqref{eq:Determinantal}, we will instead directly take the limit  $n \to \infty$ in \eqref{eq:finitekernel} while keeping the other parameters fixed. The resulting kernel is the kernel for the determinantal point process with an infinite number of paths.

We will assume that
$$\phi(z)= \prod_{m=1}^N \phi_m(z),$$
and $\phi_m$ are analytic and non-singular in an annulus $\rho < |z|<1/\rho$ containing the unit circle and it has two factorizations
\begin{equation}\label{eq:WHfact}\phi(z)=\phi_+(z)\phi_-(z)= \widetilde \phi_-(z)\widetilde \phi_+(z),\end{equation}
such that 
\begin{enumerate}
	\item $\phi_+^{\pm 1}$, $\widetilde \phi_+^{\pm 1}$ are analytic for $|z|<1$ and continuous for $|z| \leq 1$,
	\item $\phi_-^{\pm 1}$, $\widetilde \phi_-^{\pm 1}$ are analytic for $|z|>1$ and continuous for $|z| \geq 1$,
	\item and $\phi_-, \widetilde \phi_- \sim I_p z^{M}$ as $z \to \infty$. 
\end{enumerate}
In the scalar case $p=1$, the assumption  simplifies since we can always take $\tilde \phi_{\pm}=\phi_\pm$. For $p>1$, this does not necessarily (and most often does not) hold, due to non-commutativity. This makes the case $p>1$ significantly more complicated.

Under the above assumption there are two natural possibilities for obtaining limiting processes. With the non-intersecting path picture in mind, there may be non-trivial limits at the bottom of the pack or at the top. If we fix $m,m',x,x'$ and let $n\to \infty$ then we are looking at the bottom pack, while the top runs off to infinity. On the other hand, if we take 
\begin{equation}\label{eq:shiftininparam}\begin{cases}
x=n+\xi,\\
x'=n+\xi',
\end{cases}
\end{equation}
with $\xi,\xi' \in \mathbb Z$ fixed, then we are focusing at the top pack and sending the bottom part to minus infinity. As it turns out, the factorization $\phi=\phi_+ \phi_-$ is important when we study the bottom part. Similarly, the factorization $\phi=\widetilde \phi_- \widetilde \phi_+$ is important for the top part. 

The following is the first main result of this paper.  We recall that $\phi_{m,m'}$ is  defined in \eqref{eq:defphikl}.

\begin{theorem}\label{thm:main} Consider a model defined by \eqref{eq:productofdeterminants} and  \eqref{eq:endpoints} with weight $\phi$, of the form \eqref{eq:defphi} with $ \phi_m $ analytic and non-singular in an annulus $ \rho < |z| < 1/\rho $, which admits factorizations as in \eqref{eq:WHfact}.
	
	Then, in the limit as $n \to \infty$,
	
	$$ \lim_{n\to \infty}	\left[ K_n(m, px+j; m', px'+i) \right]_{i,j=0}^{p-1}  =
	\left[ K_{bottom}(m, px+j; m', px'+i) \right]_{i,j=0}^{p-1}, $$
	where
	\begin{multline}	\left[ K_{bottom}(m, px+j; m', px'+i) \right]_{i,j=0}^{p-1}
	= - \frac{\chi_{m > m'}}{2\pi \i} \oint_{|z|=1}
	\phi_{m',m}(z) z^{x'-x} \frac{\d z}{z},
	\\
	-  \frac{1}{(2\pi \i)^2}
	\iint_{|z|<|w|} 
	\phi_{m',N}(w) \phi^{-1}_-(w)\phi^{-1}_+(z) \phi_{0,m}(z)  \frac{w^{x'}  \d z \d w }{z^{x+1} (z-w)}\\
	\qquad x,x' \in \mathbb Z, \, 0 < m, m' < N.\end{multline}
	
	In other words, as $n \to \infty$, the bottom paths converge to a determinantal point process with kernel $K_{bottom}$.
	
	In the same way, 	
	\begin{multline*}
	\lim_{n\to \infty}	\left[ K_n(m, p(n+\xi)+j; m', p(n+\xi')+i) \right]_{i,j=0}^{p-1} \\ =
	\left[ K_{top}(m, p\xi+j; m', p\xi'+i) \right]_{i,j=0}^{p-1}  
	\end{multline*}
	where
	\begin{multline}	\left[ K_{top}(m, p\xi +j; m', p \xi'+i) \right]_{i,j=0}^{p-1}
	= - \frac{\chi_{m > m'}}{2\pi \i} \oint_{|z|=1}
	\phi_{m',m}(z) z^{\xi'-\xi} \frac{\d z}{z},
	\\
	+  \frac{1}{(2\pi \i)^2}
	\iint_{|w|<|z|} 
	\phi_{m',N}(w) \widetilde \phi^{-1}_+(w)\widetilde \phi^{-1}_-(z) \phi_{0,m}(z)  \frac{w^{\xi'-M}\d z \d w}{z^{\xi-M+1} (z-w)},  \\
	\qquad \xi,\xi' \in \mathbb Z, \, 0 < m, m' < N.\end{multline}
	In other words, in the limit as $n \to \infty$,  the top paths converge  to a determinantal point process with kernel $K_{top}$.
	
\end{theorem}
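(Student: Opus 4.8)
The starting point is the finite-$n$ formula \eqref{eq:finitekernel}. Its first, single-integral term is independent of $n$: writing $z^{-(x-x'+1)}=z^{x'-x}/z$ it already has the form of the corresponding term in $K_{bottom}$, and after the shift \eqref{eq:shiftininparam} (so that $x-x'=\xi-\xi'$) it reproduces the first term of $K_{top}$. Thus the entire content of the theorem lies in the large-$n$ behaviour of the double-integral term, i.e.\ of the reproducing kernel $R_n(z,w)$. One should note immediately that $R_n$ cannot converge pointwise, being a polynomial of growing degree; what will converge is a suitably normalized solution of the associated Riemann-Hilbert problem, and the double integral will be evaluated by deforming its contours into the annulus $\rho<|z|,|w|<1/\rho$ where this convergence is available.

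The plan is to replace $R_n$ by its Riemann-Hilbert representation. Recall that $R_n$ is characterized through the $2p\times 2p$ Riemann-Hilbert problem for the matrix orthogonal polynomials of degree $n$ with respect to the weight $\phi(z)\,z^{-M-n}$ on the unit circle; I denote its solution $Y_n$, normalized so that $Y_n(z)\,\diag(z^{-n}I_p,z^{n}I_p)\to I_{2p}$ as $z\to\infty$. A Christoffel-Darboux-type identity then represents the kernel as
$$R_n(z,w)=\frac{1}{z-w}\,c_1\,Y_n(w)^{-1}Y_n(z)\,c_2,$$
where $c_1$ is a constant $p\times 2p$ and $c_2$ a constant $2p\times p$ matrix (the precise form, together with the transpose conventions forced by the non-hermitian inner product, being the one recalled from \cite{DK}). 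Only the degree-$n$ solution enters, consistent with the fact that it is exactly the polynomial of degree $n$ whose existence is guaranteed; and the factor $(z-w)^{-1}$ is what ultimately produces the $(z-w)^{-1}$ and the two contour orderings $|z|<|w|$, $|w|<|z|$ in the statement.

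First I would carry out the Deift-Zhou steepest descent for $Y_n$. Because the weight is $\phi(z)$ times the pure power $z^{-M-n}$, the external field on the unit circle vanishes, the equilibrium measure is arc length, and no nontrivial $g$-function is needed; this is the source of the simplicity announced in the introduction. After normalizing at $z=\infty$ (and controlling $z=0$, the second singular point created by $z^{-M-n}$), I would deform the jump off the unit circle using the factorization $\phi=\phi_+\phi_-$: since $\phi_+^{\pm1}$ continues analytically to $|z|<1$ and $\phi_-^{\pm1}$ to $|z|>1$ throughout the annulus, the off-diagonal term $\phi(z)z^{-M-n}$ can be split and moved onto circles $|z|=\rho'$ and $|z|=1/\rho'$ with $\rho<\rho'<1$, where the factor $z^{\pm n}$ decays geometrically. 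A small-norm argument then gives the convergence of the normalized $Y_n$ to an explicit limit, uniformly on compact subsets of the annulus off the unit circle, the global parametrix being built explicitly out of $\phi_+$ and $\phi_-$.

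With these asymptotics I would substitute into the double-integral term, deform the $z$-contour just inside and the $w$-contour just outside the unit circle so that $|z|<|w|$, and pass to the limit. The large power $w^{-n}$ in the measure is cancelled by the $w^{n}$ carried by the relevant block of $Y_n(w)^{-1}$, the explicit $w^{-M}$ is absorbed by the asymptotics $\phi_-^{-1}(w)\sim w^{-M}I_p$ (condition (3) in \eqref{eq:WHfact}), and the matrix factors collapse through the explicit parametrix to $\phi_-^{-1}(w)\phi_+^{-1}(z)$, reproducing $K_{bottom}$ up to the bookkeeping of the overall sign. For $K_{top}$ the shift $x=n+\xi$, $x'=n+\xi'$ introduces the opposite powers $z^{-n}$, $w^{n}$ into the integrand, so that a finite limit now forces the $z$-contour outside and the $w$-contour inside ($|w|<|z|$) and makes the conjugate factorization $\phi=\widetilde\phi_-\widetilde\phi_+$ the relevant one, giving $\widetilde\phi_+^{-1}(w)\widetilde\phi_-^{-1}(z)$. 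The main obstacle is precisely this last matching. For $p>1$ the two factorizations genuinely differ, and one must run the steepest descent so that the correct one-sided factors appear on the correct side of $z$ and $w$ in each regime; combined with the constraint that all contour deformations remain inside the fixed annulus $\rho<|z|<1/\rho$ (so that one must check the geometric decay of $z^{\pm n}$ dominates the fixed-size local data uniformly up to the circle and near the origin), this matrix bookkeeping, rather than any single scalar estimate, is where the real work lies.
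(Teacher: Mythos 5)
Your plan is essentially the paper's own proof: the paper likewise writes the double-integral term of \eqref{eq:finitekernel} through the Christoffel--Darboux representation \eqref{eq:CDRHP} in terms of the solution $Y$ of the Riemann--Hilbert problem, performs the Deift--Zhou steepest descent (normalization at infinity, moving the jump onto circles $|z|=r$ and $|z|=1/r$ inside the annulus of analyticity, global parametrix built from the Wiener--Hopf factors, small-norm argument giving $I+\mathcal O(r^n)$), and then deforms the $z$- and $w$-contours to opposite sides of the unit circle before letting $n\to\infty$. The only presentational difference is that the paper uses a single parametrix containing both factorizations $\phi_\pm$ and $\widetilde\phi_\pm$ at once, so that the blocks picked out by $\begin{pmatrix} 0 & I_p\end{pmatrix}Y^{-1}(w)$ and $Y(z)\begin{pmatrix} I_p \\ 0\end{pmatrix}$, together with the powers $w^{\pm n}$, $z^{\pm n}$ needing cancellation, automatically select $\phi_\pm$ in the bottom regime and $\widetilde\phi_\pm$ in the top regime, rather than requiring a separate descent for each regime as you suggest.
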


The rest of this section is devoted to the proof of this theorem.

As mentioned in Section \ref{sec:finite}, the process defined by \eqref{eq:Tmblocktoeplitz}, \eqref{eq:productofdeterminants}, and \eqref{eq:endpoints}  is determinantal with correlation kernel \eqref{eq:finitekernel}. The matrix-valued orthogonal polynomials and the Christoffel-Darboux kernel $R_n(w,z)$ in \eqref{eq:finitekernel} can be characterized in terms of the following Riemann-Hilbert problem, as mentioned in \cite{DK}.

\begin{rhp} \label{rhp}
	We seek for a $2p\times 2p $ matrix-valued function $Y$ such that 
	\begin{itemize}
		\item $Y$ is analytic in $\mathbb C \setminus\{|z|=1\}$,
		\item we have $$Y_+(z)=Y_-(z) \begin{pmatrix}
		I_p & \frac{\phi(z)}{z^{n+M}} \\
		0 & I_p 
		\end{pmatrix}, \qquad |z|=1,$$
		where $Y_\pm(z)$ denote the limiting values when we approach $z$ from the inside of the unit circle, denoted by $Y_+$, or outside the unit circle, denoted by $Y_-$,
		\item as $z \to \infty$, we have  
		$$Y(z) = (I_{2p}+ \mathcal O(1/z) ) \begin{pmatrix} z^nI_p & 0 \\ 0 & z^{-n} I_p\end{pmatrix}.$$
	\end{itemize}
\end{rhp}

\begin{figure}
	\begin{center}
	\begin{tikzpicture}
		\draw[line width=.8mm] (0,0) circle(2);
		\draw[-{Classical TikZ Rightarrow[scale=1.5]},line width=.8mm] (0,2)--(-.1,2);
			\draw (-.2,2) node[below] {$+$};
				\draw (-.2,2) node[above] {$-$};
					\draw (-3,1) node[above] {$ \begin{pmatrix}
						I_p & \frac{\phi(z)}{z^{n+M}} \\
						0 & I_p 
						\end{pmatrix}$};
			\end{tikzpicture}
			\caption{Jump contours for $Y$.} 
			\label{eq:jumpsforY}
			\end{center}
\end{figure}
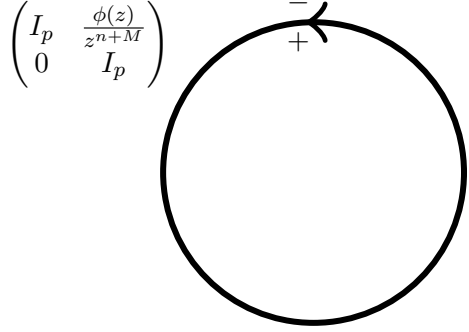

Here, and from now on, we will write $0$ for zero matrices. The dimension of the matrices will be clear from the context. 

If the solution $Y$ to the above Riemann-Hilbert problem exists, then one can show that the upper-left block of the solution is precisely the matrix orthogonal polynomial of degree $n$. Moreover, the Christoffel-Darboux kernel can be expressed in terms of $Y$ as follows,
where $R_n(z,w)$ is given by \cite[Prop. 4.9]{DK},
\begin{equation}\label{eq:CDRHP}
	R_n(z,w)=\frac{1}{z-w}  \begin{pmatrix} 0 & I_p \end{pmatrix} Y^{-1}(w) Y(z) \begin{pmatrix}
I_p\\
0\end{pmatrix}.
\end{equation}
This representation of the Christoffel-Darboux kernel in the context of matrix-valued orthogonal polynomials was first derived by Delvaux \cite[Prop 1.10]{D}.

\begin{proposition}\label{thm:RHP}
	Assume that $\phi(z)$ admits  factorizations as in \eqref{eq:WHfact} and $ \phi^{-1} $ is analytic in an annulus $\rho<1 <1/\rho$  for some $\rho$. Then, as $n \to \infty$ and for $|z|<1$, we have
	$$
	Y(z)=(I+\mathcal O(r^n))
	\begin{pmatrix}
	0 & 	\widetilde \phi_+(z)  \\ 
	-\phi_+^{-1}(z)& 0
	\end{pmatrix},$$
	for any $r$ such that $\max(|z|,\rho)<r<1$.
	
	For $|z|>1$, we have, as $n \to \infty$,
	$$
	Y(z)=(I+\mathcal O(r^n))
	\begin{pmatrix}
	z^{n+M} \widetilde \phi^{-1}_-(z) & 0 \\ 
	0 &z^{-n-M} \phi_-(z)
	\end{pmatrix},$$
	for any $r$ such that $\min(|z|,1/\rho)>1/r$. 
\end{proposition}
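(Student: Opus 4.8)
The plan is to run a Deift--Zhou nonlinear steepest descent analysis on Riemann--Hilbert Problem \ref{rhp}, in which the oscillatory factor $z^{-n-M}$ in the jump is traded, through the two Wiener--Hopf factorizations \eqref{eq:WHfact}, for exponentially decaying error jumps. The guiding observation is that on the unit circle $|z^{-n-M}|=1$, while $|z^{n+M}|\to 0$ inside the disk and $|z^{-n-M}|\to 0$ outside it; the whole construction is arranged so that every error jump lives on a contour off the unit circle where one of these powers decays.

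First I would construct a global parametrix $P$. Motivated by the target formulas, set
$$P(z)=\begin{pmatrix} 0 & \widetilde\phi_+(z)\\ -\phi_+^{-1}(z) & 0\end{pmatrix}\ (|z|<1),\qquad P(z)=\begin{pmatrix} z^{n+M}\widetilde\phi_-^{-1}(z) & 0\\ 0 & z^{-n-M}\phi_-(z)\end{pmatrix}\ (|z|>1).$$
By assumptions 1--2 on the factorizations, $P^{\pm1}$ is analytic and bounded in each region up to the circle, and assumption 3 ($\phi_-,\widetilde\phi_-\sim I_p z^M$) guarantees that $P$ has precisely the behavior $(I_{2p}+\mathcal O(1/z))\diag(z^nI_p,z^{-n}I_p)$ at infinity demanded of $Y$. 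A direct block computation, collapsing products via $\phi=\phi_+\phi_-=\widetilde\phi_-\widetilde\phi_+$, shows that the jump of $P$ across $|z|=1$ equals the anti-diagonal matrix $J_P:=\begin{pmatrix} 0 & z^{-n-M}\phi\\ -z^{n+M}\phi^{-1}& 0\end{pmatrix}$.

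Next I would open lenses. Using the elementary factorization
$$\begin{pmatrix} I_p & z^{-n-M}\phi\\ 0 & I_p\end{pmatrix}=L\,J_P\,L,\qquad L:=\begin{pmatrix} I_p & 0\\ z^{n+M}\phi^{-1} & I_p\end{pmatrix},$$
which makes sense since $\phi^{-1}$ is analytic in the annulus $\rho<|z|<1/\rho$, I would introduce auxiliary circles $|z|=r_{\mathrm{in}}$ and $|z|=r_{\mathrm{out}}$ with $\rho<r_{\mathrm{in}}<1<r_{\mathrm{out}}<1/\rho$ and define $T=YL^{-1}$ in $r_{\mathrm{in}}<|z|<1$, $T=YL$ in $1<|z|<r_{\mathrm{out}}$, and $T=Y$ elsewhere. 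Then $T$ has jump exactly $J_P$ on $|z|=1$ and jump $L$ on each auxiliary circle. Setting $R:=TP^{-1}$, the common factor $J_P$ cancels so $R$ has no jump across $|z|=1$, $R\to I_{2p}$ at infinity, and its only jumps are the conjugates $PLP^{-1}$ on the two auxiliary circles. The crucial point is that these conjugated jumps decay: a short computation gives, on $|z|=r_{\mathrm{in}}$, the upper-triangular $\begin{pmatrix} I_p & -z^{n+M}\widetilde\phi_+\phi^{-1}\phi_+\\ 0 & I_p\end{pmatrix}=I_{2p}+\mathcal O(r_{\mathrm{in}}^{\,n})$, and on $|z|=r_{\mathrm{out}}$ the lower-triangular $\begin{pmatrix} I_p & 0\\ z^{-n-M}\phi_-\phi^{-1}\widetilde\phi_- & I_p\end{pmatrix}=I_{2p}+\mathcal O(r_{\mathrm{out}}^{-n})$, the decay coming from $z^{n+M}$ inside and $z^{-n-M}$ outside.

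I would close with standard small-norm Riemann--Hilbert theory: since $R$ solves an RHP on the fixed contour $\{|z|=r_{\mathrm{in}}\}\cup\{|z|=r_{\mathrm{out}}\}$ with jump $I_{2p}+\mathcal O(r^{n})$ in $L^2\cap L^\infty$ and normalization $R\to I_{2p}$, it exists for large $n$ and satisfies $R(z)=I_{2p}+\mathcal O(r^n)$ uniformly on compact sets away from the contour. Unwinding $Y=RP$ (through $T$) in each region then produces the two claimed expansions, with the radii taken as $r_{\mathrm{in}}$ just above $\max(|z|,\rho)$ for $|z|<1$ and $r_{\mathrm{out}}=1/r$ just below $\min(|z|,1/\rho)$ for $|z|>1$, which is exactly the admissible range of $r$ in the statement. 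I expect the main obstacle to be the non-commutative bookkeeping: one must deploy the two genuinely different factorizations $\phi_+\phi_-$ and $\widetilde\phi_-\widetilde\phi_+$ at once---one pair to keep $P$ analytic and invertible inside the disk, the other to match the growth $z^{\pm(n+M)}$ at infinity outside---and then verify that conjugation by $P$ really turns the growing lens factor $L$ into the two decaying jumps above. This is where the asymmetry between the inside and outside, and the role of assumption 3, genuinely enter.
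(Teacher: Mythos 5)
Your proof is correct and follows essentially the same Deift--Zhou steepest descent route as the paper: the same triangular lens factorization of the jump, the same global parametrix built from the two Wiener--Hopf factorizations $\phi=\phi_+\phi_-=\widetilde\phi_-\widetilde\phi_+$, and the same small-norm argument on the two auxiliary circles. The only (cosmetic) difference is that the paper first normalizes $Y$ at infinity so that its parametrix is $n$-independent, whereas you fold the factors $z^{\pm(n+M)}$ directly into the outside parametrix; the two constructions are related by right-multiplication with $\begin{pmatrix} z^{-n}I_p & 0\\ 0 & z^{n}I_p\end{pmatrix}$ in $|z|>1$.
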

\begin{proof}
	The proof follows by a steepest descent analysis of the Riemann-Hilbert problem. Note that for the scalar case, the analysis is rather standard. An excellent introduction to  Riemann-Hilbert problems and their use in asymptotic analysis can be found in \cite{DeiftIntegrable, Deift}. In the matrix case, there are some small but important differences due to the non-commutativity of the factorization \eqref{eq:WHfact}.  
	
	{\bf Step 1}
	The first step is a normalization at infinity. That is, we define 
	$$X(z) = 
	\begin{cases}
	Y(z) \begin{pmatrix} z^{-n} I_p & 0 \\ 0 & z^{n}I_p\end{pmatrix}, & |z|>1,\\
	Y(z), & |z|<1. 
	\end{cases}.$$
	Then it is easy to verify that $X$ satisfies a Riemann Hilbert problem with
	$$X_+(z)=X_-(z) \begin{pmatrix}
	z^{n} I_p & \phi(z)z^{-M} \\ 
	0 & z^{-n} I_p	\end{pmatrix}, \quad |z|=1,$$
	and $$X(z)= I_{2p} +\mathcal O(1/z), \qquad z \to \infty.$$
	The benefit with working with $X$ is that the asymptotics at infinity is normalized and it does not depend on $n$. We also see that the jump matrix is highly oscillating if $n$ is large. In the next step we perform the standard trick of opening of the lenses, to replace the oscillatory jump matrices with exponentially decaying ones. 
	
	\begin{figure}
		\begin{center}
			\begin{tikzpicture}
			\draw[line width=.8mm] (0,0) circle(2);
			\draw[-{Classical TikZ Rightarrow[scale=1.5]},line width=.8mm] (0,2)--(-.1,2);
			\draw (-.2,2) node[below] {$+$};
			\draw (-.2,2) node[above] {$-$};
			
				\draw[line width=.8mm] (0,0) circle(1.25);
			\draw[-{Classical TikZ Rightarrow[scale=1.5]},line width=.8mm] (0,1.25)--(-.1,1.25);
			\draw (-.4,1.1) node[below] {$+$};
			\draw (-.6,1.1) node[above] {$-$};
			
				\draw[line width=.8mm] (0,0) circle(2.75);
			\draw[-{Classical TikZ Rightarrow[scale=1.5]},line width=.8mm] (0,2.75)--(-.1,2.75);
			\draw (.5,2.7) node[below] {$+$};
			\draw (.7,2.7) node[above] {$-$};
			
			\draw (-6,0) node[above] { 
			$ \begin{pmatrix} 0 & \phi(z)z^{-M} \\ -\phi(z)^{-1}z^M  \end{pmatrix}$};
			\draw (-6,1.5) node[above] {  $\begin{pmatrix} I_p & 0\\ z^{-n+M} \phi^{-1}(z)  & I_p  \end{pmatrix}$};
			\draw (-6,-1.5) node[above] { 
			$ \begin{pmatrix} I_p & 0\\  z^{n+M} \phi^{-1}(z)  & I_p  \end{pmatrix} $};
		
		\draw[dashed,help lines,->] (-4,2) .. controls (-3,2.5) and (-3,2.5) .. (-2,2); 
			\draw[dashed,help lines,->] (-3.5,.5)--(-2,.5);
				\draw[dashed,help lines,->] (-4,-1) .. controls (-3,-1.5) and (-3,-1.5) .. (-1,-1); 
		
			\end{tikzpicture}
			\caption{Jump contours for $T$. The jumps on the inner and outer circle give only exponentially small contributions and will be negligible as $n \to \infty$. } 
			\label{eq:jumpsforT}
		\end{center}
	\end{figure}
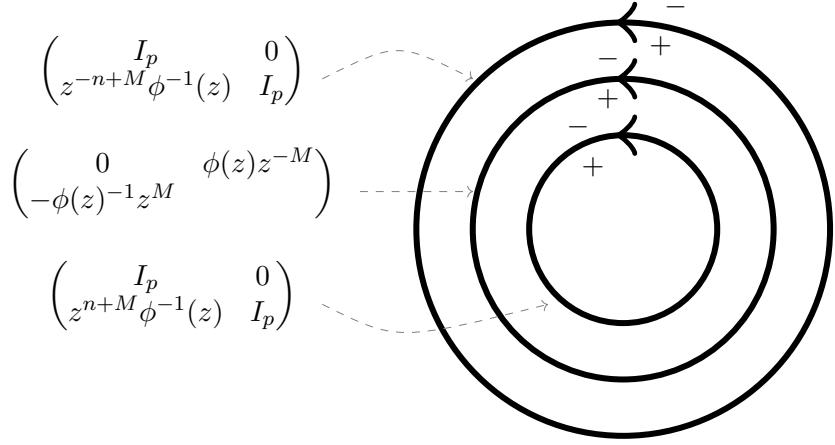

	{\bf  Step 2} We define a new function $T$   out of  $X$   in the following way 
	$$T(z) = 
	\begin{cases}
	X(z) \begin{pmatrix} I_p & 0\\  z^{-n+M} \phi^{-1}(z)  & I_p  \end{pmatrix}, & 1<|z|<1/r,\\
	X(z) \begin{pmatrix} I_p & 0\\ -z^{n+M} \phi^{-1}(z)  & I_p  \end{pmatrix}, & r<|z|<1,\\
	X(z), & |z|>1/r \textrm{ or } |z|<r. 
	\end{cases}$$
	Here we take $\rho<r<1$ so that the annulus defined by $r<|z|<1/r$ is inside the annulus $\rho< 1 <1/\rho$ of analyticity of $\phi^{-1}$. A straightforward check shows that $T$ satisfies a Riemann-Hilbert problem with jump conditions given by 
	$$T_+(z)=T_-(z)  \begin{pmatrix} I_p & 0\\ z^{-n+M} \phi^{-1}(z)  & I_p  \end{pmatrix} , \quad |z|=1/r,$$
	$$T_+(z)=T_-(z)  \begin{pmatrix} I_p & 0\\  z^{n+M} \phi^{-1}(z)  & I_p  \end{pmatrix} , \quad |z|=r,$$
	$$T_+(z)=T_-(z)  \begin{pmatrix} 0 & \phi(z)z^{-M} \\ -\phi(z)^{-1}z^M  \end{pmatrix} , \quad |z|=1,$$
	and asymptotic condition $T(z)=I_{2p}+ \mathcal O(1/z)$ as $z \to \infty$. Note that the jumps on $|z|=r,1/r$ are exponentially decaying as $n\to \infty$, so we expect the contribution from these jumps to be negligible. 
	
	{\bf Step 3} If we ignore the exponentially decaying jumps then we obtain the Riemann-Hilbert problem 
	$$\begin{cases}
	P_+(z)=P_-(z)  \begin{pmatrix} 0 & \phi(z)z^{- M}  \\ -\phi(z)^{-1}z^M &0  \end{pmatrix} ,& |z|=1,\\
	P(z)= I_{2p} + \mathcal O(1/z), & z \to \infty.	\end{cases}
	$$
	The solution to this Riemann-Hilbert problem is easily  constructed and given by 
	$$P(z)= \begin{cases}
	\begin{pmatrix}
	\widetilde \phi^{-1}_-(z)z^M & 0 \\ 
	0 & \phi_-(z)z^{-M}
	\end{pmatrix}, &  |z|>1,\\
	\begin{pmatrix}
	0 & 	\widetilde \phi_+(z)  \\ 
	-\phi_+^{-1}(z)& 0
	\end{pmatrix}, &  |z|<1.\\
	\end{cases}$$
	Note that it is at this point that we need the factorization of $\phi$.

	{\bf Step 4}  It remains to verify that $T$ is close to $P$ for large $n$. Define the function $R$ by 
	$$R(z)= T(z)P(z)^{-1}.$$
	Then $R$ has only jumps for $|z|=r$ and $|z|=1/r$, which are exponentially small. Standard arguments (see for example \cite{DeiftIntegrable, Deift}) now imply that $R$ can be solved in terms of a Neumann series and $$R(z)= I_{2p}+\mathcal O(r^{n}), \qquad n\to \infty,$$
	uniformly on compact subsets of $\mathbb C \setminus \partial \mathbb D$. 
	
	{\bf Step 5} Finally, we trace back the transformations and obtain the behavior of $Y$ in the stated regions. For $|z|<1$ we choose $r$ such that $\max(|z|, \rho)<r<1$ 	and obtain 
	$$Y(z)= X(z)= T(z)=(I+\mathcal O(r^n)) P(z)=(I+\mathcal O(r^n))	\begin{pmatrix}
	0 & 	\widetilde \phi_+(z)  \\ 
	-\phi_+^{-1}(z)& 0
	\end{pmatrix}.$$
	For $|z|>1$ we choose  $1/r<\min (|z|,\rho^{-1})$ 	and now obtain 
	\begin{multline}Y(z)= X(z) \begin{pmatrix} z^{n} I_p & 0 \\ 0 & z^{-n}I_p\end{pmatrix}= T(z)  \begin{pmatrix} z^{n} I_p & 0 \\ 0 & z^{-n}I_p\end{pmatrix}\\=(I+\mathcal O(r^n)) P(z) \begin{pmatrix} z^{n} I_p & 0 \\ 0 & z^{-n}I_p\end{pmatrix}
	=(I+\mathcal O(r^n))\begin{pmatrix}
	z^{n+M} \widetilde \phi^{-1}_-(z) & 0 \\ 
	0 &z^{-n-M} \phi_-(z)
	\end{pmatrix}.\end{multline}
	This proves the statement.
\end{proof}

Now that we have computed the asymptotic behavior of the solution to the Riemann-Hilbert problem we are ready for the proof of Theorem \ref{thm:main}.

\begin{proof}[Proof of Theorem \ref{thm:main}]
	We start with the bottom part of the line ensemble. In \eqref{eq:finitekernel} we deform the contour for $w$ to a circle with a radius slightly bigger than one, but still inside the annulus of analyticity of $\phi$. The contour for $z$ is deformed to a circle with radius less than one. 
	
	Then $$w^{-n} \begin{pmatrix} 0 & I_p \end{pmatrix} Y^{-1}(w) = w^M\phi_-^{-1}(w)\begin{pmatrix} 0 & I_p \end{pmatrix}(I_p+\mathcal O(r^n)), \qquad |w|>1,$$
	and $$Y(z) \begin{pmatrix} I_p \\0 \end{pmatrix}=  -(I_p+\mathcal O(r^n))\phi_+^{-1}(z)\begin{pmatrix} 0 \\  I_p \end{pmatrix}, \quad |z|<1,$$
	where $ r\in(0,1) $ is such that $ |z|,1/|w|<r $. Inserting this into the kernel \eqref{eq:finitekernel} and \eqref{eq:CDRHP}, and then taking the limit $n \to \infty$ gives the statement.
	
	Next we deal with the top part of the line ensemble. We set $x=\xi+n$ and $x'=\xi' +n$ in \eqref{eq:finitekernel}. Then we deform the contour for $w$ to a circle with a radius slightly less  than one  and the contour for $z$ will deformed to a circle with radius slightly bigger than one. 
	Then $$ \begin{pmatrix} 0 & I_p \end{pmatrix} Y^{-1}(w) = \widetilde \phi_+^{-1}(w)\begin{pmatrix}  I_p &0 \end{pmatrix}(I_p+\mathcal O(r^n)), \qquad |w|<1,$$
	and $$z^{-n} Y(z) \begin{pmatrix} I_p \\0 \end{pmatrix}=  z^M (I_p+\mathcal O(r^n))\widetilde \phi_-^{-1}(z)\begin{pmatrix}  I_p \\ 0 \end{pmatrix}, \quad |z|>1,$$
	where $ r\in(0,1) $ is such that $ |w|,1/|z|<r $. Inserting this into the kernel \eqref{eq:finitekernel} and \eqref{eq:CDRHP}, and then taking the limit $n \to \infty$ gives the statement. \end{proof}

	\section{Matrix factorizations} \label{sec:fact}
In light of Theorem \ref{thm:main} it  is pertinent to understand which weights $\phi$ have the factorizations \eqref{eq:WHfact} and how to compute them. This is a classical problem that has been studied intensively and we refer to \cite{GGH} for an overview of results. Under certain conditions one can show that such factorizations exists. Existence, however, is not enough for our purposes, as we are after explicit constructions. We will employ certain commutation relations for the matrix-valued symbols in \eqref{eq:matrixjump1}--\eqref{eq:matrixjump4} that will prove existence by a constructive procedure. In general this procedure is still elaborate, but as we will see in Sections \ref{section:example_aztec_diamond} and \ref{section:example_lozenge_tiling}, it can be worked out explicitly in certain examples.

	\subsection{The case $p=1$}\label{sec:factorization_p1}
	
	For $p=1$ this problem is rather straightforward, especially if we assume the symbols to be of the form \eqref{eq:bernoulliup}--\eqref{eq:geometricdown}. In that case, the factorizations can be found by dividing the symbols into two groups, one with poles and zeros inside the circle and the other with poles and zeros outside the circle. Indeed, if $\phi$ is a rational function on $\mathbb C$ with no zeros or poles on the circle, the necessary and sufficient condition to have a factorization \eqref{eq:WHfact} is that the winding number of $\phi$ is equal to the shift $M$, 
		\begin{equation}\label{eq:windingnumber}
			 \frac{1}{ 2 \pi \i} \oint \frac{\phi'(z)}{\phi(z)} \d z=M.
		\end{equation}
In other words, the number of zeros minus the number poles inside (both counted with respect to multiplicity) the unit disk equals $M.$

This pretty much settles the situation in case each $\phi_m$ is given by one of \eqref{eq:bernoulliup}--\eqref{eq:geometricdown}.  Note that each of these symbols has exactly one pole and one zero. The winding number of each  symbol corresponding to a geometric step up \eqref{eq:geometricup} is zero, since each such term has a pole and zero outside the unit disk. Similarly, symbols corresponding to a geometric jump down have a pole inside the unit disk and a zero at zero. The Bernoulli step up \eqref{eq:bernoulliup} has a pole at infinity but the zero can be both inside and outside the disk. If it is inside, the winding number is $+1$, otherwise it is zero.  The Bernoulli step down \eqref{eq:bernoullidown} has a pole at zero, but the zero can be both inside and outside the disk.  If it is outside, the winding number is $-1$, otherwise it is zero. By setting 
	$$
		M_1= \# \{m \ | \  \phi_m(z)= \phi^{b, \uparrow}(z;a_m) \text{ for some } a_m>1\},
	$$
	and 
	$$
		M_2= \# \{m \ | \ \phi_m(z)= \phi^{b, \downarrow}(z;a_m) \text{ for some } a_m<1\},
	$$
	we thus see that if \eqref{eq:windingnumber} holds then $M=M_1-M_2$. Therefore,  we  have a factorization as in \eqref{eq:WHfact} by setting
$$
	\phi_-(z)=  z^{-M_2} \prod_{m \in I} \phi_m(z), \qquad \phi_+(z) =z^{M_2} \prod_{ m \in I^c} \phi_m(z),
	$$
where 
$$
	I=\left \{ m \mid \phi_m \text{ has a pole or zero in   } \mathbb D \setminus   \{0 \}\right\},
$$
and $I^c= \{1,\ldots, N\}\setminus I$.
This means that  	\begin{multline} K_{bottom}(m, x;m',x')
= - \frac{\chi_{m > m'}}{2\pi \i} \oint_{\gamma}
\phi_{m',m}(z) z^{x'-x} \frac{\d z}{z}
\\
-  \frac{1}{(2\pi \i)^2}
\iint_{|z|<|w|} 
\frac{\prod_{k\in I^c, k > m'} \phi_{k}(w)}{\prod_{k \in I^c, k  > m}  \phi_{k}(z)}\frac{\prod_{\ell \in I, \ell  \leq m} \phi_{\ell}(z)}{ \prod_{\ell \in I, \ell \leq m'} \phi_{\ell}(w)} \frac{w^{x'+M_2}  \d z \d w }{z^{x+M_2+1} (z-w)}\\
\qquad x,x' \in \mathbb Z, \, 0 < m, m' < N.
\end{multline}
This is a well-known formula from Schur processes, see for example \cite[Th. 2.7]{J17}.  Similarly, 
				\begin{multline} K_{top}(m, \xi;m',\xi')
			= - \frac{\chi_{m > m'}}{2\pi \i} \oint_{\gamma}
			\phi_{m',m}(z) z^{\xi'-\xi} \frac{\d z}{z}
			\\
			+ \frac{1}{(2\pi \i)^2}
			\iint_{|z|>|w|} 
			\frac{\prod_{k\in I, k  > m'} \phi_{k}(w)}{\prod_{k \in I, k  > m}  \phi_{k}(z)}\frac{\prod_{\ell \in I^c, \ell  \leq m} \phi_{\ell}(z)}{ \prod_{\ell \in I^c, \ell \leq m'} \phi_{\ell}(w)} \frac{w^{\xi'+M_1}  \d z \d w }{z^{\xi+M_1+1} (z-w)}\\
			\qquad \xi,\xi' \in \mathbb Z, \, 0 < m, m' < N
			.\end{multline}
			This settles the case $p=1$.

	\subsection{Switching rules}\label{sec:switching}
	In case $p>1$ the existence of the factorization is more complicated than the scalar case $p=1$. The reason is that the matrices not necessarily (and most often do not) commute so that we can not simply reorganize the product at the left hand side of \eqref{eq:WHfact} as we could in case $p=1$. However, instead of simply commuting the factors, one can use  certain rules for switching the order of the matrices. 
	
 Suppose we have a factor 
 $\phi_{m}(z)\phi_{m'}(z)$
 and $\phi_{m}$ and $\phi_{m'}$ are regular inside and outside the disk respectively. We would like to switch them, but, of course,  in general we have $\phi_{m}(z)\phi_{m'}(z)\neq \phi_{m'}(z)\phi_{m}(z)$. However,  it is possible to find new $\phi_{m}'(z)$ and $\phi_{m'}'(z)$ that are of the same type (e.g. if $\phi_m$ is a Bernoulli step up, then so is $\phi_m'$ etcetera) such that 
 $$\phi_{m}(z)\phi_{m'}(z)= \phi'_{m'}(z)\phi_{m}'(z),$$
 and $\det \phi_{m}(z)= \det \phi_{m}'(z)$ and $\det \phi_{m'}(z)= \det \phi'_{m'}(z)$. By iterating this process we can find the factorizations in \eqref{eq:WHfact}. 
 
 Let us illustrate the switching rule first for $p=2$.  We recall that for $p=2$ the matrices that we are interested in, are given by \eqref{eq:p2bernoulliup}--\eqref{eq:p2geometricdown}. Then the switching rules are given by the identities in the following lemma. 
	\begin{lemma} The following identities hold
	\begin{equation}
	 \begin{pmatrix}
	 a & b/z\\
	 c & d
	\end{pmatrix}\begin{pmatrix}
	\alpha & \beta \\
	\gamma z & \delta 	 
	\end{pmatrix}=\begin{pmatrix}
	\delta x & \beta \\
	\gamma z &\alpha/x
	\end{pmatrix}\begin{pmatrix}
	d& b /zx\\
	xc & a 	 
	\end{pmatrix}, \quad x=\frac{a \alpha+ b \gamma}{d \delta+\beta c},
	\end{equation}
	\begin{equation}\label{eq:2p_switching_rule_bernoulli}
	\begin{pmatrix}
	a& b\\
	cz & d 	 
	\end{pmatrix}\begin{pmatrix}
	\alpha & \beta \\
	\gamma z & \delta 	 
	\end{pmatrix}=\begin{pmatrix}
	\alpha & \gamma x  \\
	\beta z/x &\delta
	\end{pmatrix}\begin{pmatrix}
	a& c x \\
	b z/x & d	 
	\end{pmatrix}, \quad x=\frac{a \beta + b \delta}{c \alpha+d \gamma},
	\end{equation}
	and
	\begin{equation}\label{eq:2p_switching_rule}
	\begin{pmatrix}
	a& b /z\\
	c & d 	 
	\end{pmatrix}\begin{pmatrix}
	\alpha & \beta /z \\
	\gamma & \delta 	 
	\end{pmatrix}=\begin{pmatrix}
	\alpha & \gamma x /z \\
	\beta/x &\delta
	\end{pmatrix}\begin{pmatrix}
	a& c x /z\\
	b/x & d	 
	\end{pmatrix}, \quad x=\frac{a \beta + b \delta}{c \alpha+d \gamma}.
	\end{equation}
	\end{lemma}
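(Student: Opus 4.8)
The plan is to prove all three identities by direct verification. Each assertion is an exact equality of $2\times 2$ matrices whose entries are Laurent polynomials in $z$, so it suffices to expand both products and compare the four entries one at a time. I would handle the three identities in parallel, since they are governed by the same mechanism, and I would introduce no machinery beyond elementary bilinear algebra.

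For each identity the organizing observation is that, for \emph{every} value of the scalar $x$, two of the four entries of the two sides already agree, while the remaining two entries each impose a single linear condition on $x$. For \eqref{eq:2p_switching_rule_bernoulli} and \eqref{eq:2p_switching_rule} the two diagonal entries match identically, because the factor $x$ appearing in the off-diagonal slot of the first factor cancels against the $1/x$ appearing in the off-diagonal slot of the second. The two off-diagonal entries of the right-hand side then reduce (after stripping the common power of $z$ that both sides carry) to $x(c\alpha+d\gamma)$ in position $(1,2)$ and $(a\beta+b\delta)/x$ in position $(2,1)$; equating these to the corresponding left-hand values $a\beta+b\delta$ and $c\alpha+d\gamma$ produces the two conditions $x(c\alpha+d\gamma)=a\beta+b\delta$ and $(a\beta+b\delta)/x=c\alpha+d\gamma$. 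For the first (unlabelled) identity the roles of the diagonal and off-diagonal entries are reversed: here the off-diagonal entries match automatically (again by cancellation of $x$), and the diagonal entries yield $x(d\delta+\beta c)=a\alpha+b\gamma$ together with $(a\alpha+b\gamma)/x=d\delta+\beta c$.

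The crux of the argument, and the only point that could conceivably fail, is that in each case the two conditions on $x$ are the \emph{same} equation, so that a single scalar $x$ simultaneously reconciles all four entries. This is immediate once each pair is written in the common form $x\cdot(\text{denominator})=(\text{numerator})$, and it delivers exactly the stated value of $x$. I would record this consistency as the main step. Implicit in it is that the denominator ($d\delta+\beta c$ for the first identity, $c\alpha+d\gamma$ for the other two) is nonzero, so that $x$ is well defined; in the intended application the entries are built from the strictly positive parameters of \eqref{eq:p2bernoulliup}--\eqref{eq:p2geometricdown}, whence these denominators are positive and the issue does not arise.

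Finally, I would note that the determinant bookkeeping used in the surrounding discussion of the switching rules falls out of the very same computation: expanding $2\times 2$ determinants shows that the first factor on the right has the same determinant as the first factor on the left, and likewise for the second factor, so the switch preserves the determinant of each individual factor while merely interchanging the position of the $z$-dependence (the type). No further input is required; the entire lemma is routine, with the coincidence of the two constraints on the single parameter $x$ being its one genuinely substantive feature.
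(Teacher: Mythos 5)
Your proof is correct and takes exactly the paper's approach: the paper disposes of the lemma with ``the identities follow easily by a direct verification,'' and your entry-by-entry check (two entries of each identity agree for every $x$, while the remaining two impose one and the same linear condition on $x$, yielding the stated value) is precisely that verification spelled out. One small slip in your closing side remark, which concerns the surrounding discussion rather than the lemma itself: since the factors are interchanged by the switch, the determinant matching pairs the first factor on the left with the \emph{second} factor on the right (and the second on the left with the first on the right), e.g.\ in \eqref{eq:2p_switching_rule_bernoulli} both of these have determinant $ad-bcz$, whereas the two first factors have determinants $ad-bcz$ and $\alpha\delta-\beta\gamma z$, which differ in general.
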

\begin{proof}
	The identities follow easily by a direct verification. 
\end{proof}

For $p>2$ we have similar switching rules, that have already been discussed in the literature. Here we will  follow the definitions and notations as in  \cite{LP}. We refer to that paper and the references therein for more background.  

We start with the map $\eta$ defined by  
\begin{equation}
\eta({\bf a}, {\bf b}) = ({\bf b'},{\bf a'}),
\end{equation}
where ${\bf b'}=(b_0',\ldots,b_{p-1}')$ and   ${\bf a'}=(a_0',\ldots,a_{p-1}')$ are defined by the rules 
\begin{equation}
b'_i = b_{i+1}\frac{k_{i+1}}{k_i}, \qquad a'_i = a_{i-1}\frac{k_{i-1}}{k_i},
\end{equation}
and
\begin{equation}
k_i = \sum_{j=i}^{i+p-1}\prod_{\ell=i+1}^jb_\ell \prod_{\ell=j+1}^{i+p-1}a_\ell.
\end{equation}
In these formulas we used the convention $a_{p+j}= a_{j}$ and  $b_{p+j}= b_{j}$.
\begin{lemma}{\cite[Lem. 6.1, Th. 6.2]{LP}} \label{lem:eta} Set $
({\bf b'}, {\bf a'}) = \eta({\bf a}, {\bf b})
	$. Then
$$
	M(z;{\bf a})	M(z;{\bf b})=M(z;{\bf b'})	M(z;{\bf a'}),
$$
	and $\det 	M(z;{\bf a})= \det 	M(z;{\bf a'})$ and $\det 	M(z;{\bf b})= \det 	M(z;{\bf b'})$.  	Similarly,
$$
N(z;{\bf a})	N(z;{\bf b})=N(z;{\bf b'})	N(z;{\bf a'}),
$$
and $\det 	N(z;{\bf a})= \det 	N(z;{\bf a'})$ and $\det 	N(z;{\bf b})= \det 	N(z;{\bf b'})$.  
	\end{lemma}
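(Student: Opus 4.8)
The plan is to exploit the fact that both families are built from a single object. Writing $E(z;\mathbf a)$ for the weighted cyclic shift whose only nonzero entries are $a_{r-1}$ in position $(r,r+1)$ for $r=1,\dots,p-1$ and $a_{p-1}z$ in position $(p,1)$, one reads off from \eqref{eq:defMa} that $M(z;\mathbf a)=I_p+E(z;\mathbf a)$. Moreover $E(z;\mathbf a)^p=az\,I_p$, so the geometric series collapses to the closed form $(I_p-E(z;\mathbf a))^{-1}=\frac{1}{1-az}\sum_{k=0}^{p-1}E(z;\mathbf a)^k$, which is exactly \eqref{eq:defNa}; hence $N(z;\mathbf a)=(I_p-E(z;\mathbf a))^{-1}$. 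Abbreviating $E_a=E(z;\mathbf a)$ and $(\mathbf b',\mathbf a')=\eta(\mathbf a,\mathbf b)$, the whirl identity is $(I_p+E_a)(I_p+E_b)=(I_p+E_{b'})(I_p+E_{a'})$, and the curl identity, after inverting both sides, is the equivalent statement $(I_p-E_b)(I_p-E_a)=(I_p-E_{a'})(I_p-E_{b'})$.

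First I would expand both products and use that $E_a,E_b$ are supported on the first cyclic off-diagonal band, while $E_aE_b$ and $E_bE_a$ are supported on the second band (for $p\ge 3$ these two bands and the diagonal are disjoint; for $p=2$ the ``second band'' is the diagonal and the argument is unchanged). Comparing bands separately, the whirl identity becomes equivalent to the pair $E_a+E_b=E_{a'}+E_{b'}$ and $E_aE_b=E_{b'}E_{a'}$, while the curl identity becomes equivalent to the same linear relation together with $E_bE_a=E_{a'}E_{b'}$. Thus proving three scalar families of identities, namely $a_i+b_i=a'_i+b'_i$ from the linear band, and $a_ib_{i+1}=b'_ia'_{i+1}$ together with $b_ia_{i+1}=a'_ib'_{i+1}$ from the quadratic band, establishes both matrix identities simultaneously.

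Next I would reduce each of these to a statement about the $k_i$. For instance $a_i+b_i=a'_i+b'_i$ becomes, after clearing denominators and using $a'_i=a_{i-1}k_{i-1}/k_i$ and $b'_i=b_{i+1}k_{i+1}/k_i$, the three-term recursion $(a_i+b_i)k_i=a_{i-1}k_{i-1}+b_{i+1}k_{i+1}$, and the quadratic relations become analogous bilinear identities among the $k_i$. Reading $k_i=\sum_{j=i}^{i+p-1}\prod_{\ell=i+1}^{j}b_\ell\prod_{\ell=j+1}^{i+p-1}a_\ell$ as the partition function of a one-dimensional chain on the cyclic window $\{i+1,\dots,i+p-1\}$ that switches from $b$-weights to $a$-weights at a single breakpoint $j$, each identity should follow by splitting the sum according to the position of the first or last factor, i.e. by a telescoping or path-decomposition argument. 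This bookkeeping is precisely the content of \cite[Lem.~6.1, Th.~6.2]{LP}, and I expect it to be the main obstacle: the map $\eta$ is reverse-engineered exactly so that these recursions hold, so the difficulty is organizing the cyclic sums rather than any conceptual point.

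Finally, the determinant statements are immediate from the same description. Since $E_a$ is a weighted $p$-cycle with product of weights $az$ around the cycle, $\det(\lambda I_p-E_a)=\lambda^p-az$, so that $\det M(z;\mathbf a)=1-(-1)^p az$ and $\det N(z;\mathbf a)=1/(1-az)$ depend on $\mathbf a$ only through $a=\prod_i a_i$. The telescoping identity $\prod_i a'_i=\big(\prod_i a_{i-1}\big)\prod_i(k_{i-1}/k_i)=\prod_i a_i$, and likewise $\prod_i b'_i=\prod_i b_i$, then give $\det M(z;\mathbf a)=\det M(z;\mathbf a')$ and $\det M(z;\mathbf b)=\det M(z;\mathbf b')$, with the same conclusion for $N$.
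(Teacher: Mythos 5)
Your algebraic framework is correct and genuinely illuminating: $M(z;\mathbf a)=I_p+E_a$ with $E_a^p=az\,I_p$, the identification $N(z;\mathbf a)=(I_p-E_a)^{-1}$, and the band-by-band comparison (with the $p=2$ caveat you note) all check out, and the whirl half plus the determinant claims do go through on this route. Indeed your family (ii), $a_ib_{i+1}=b_i'a_{i+1}'$, is an immediate cancellation of the $k$'s, and family (i), $a_i+b_i=a_i'+b_i'$, follows from exactly the telescoping you describe, since
$(a_i+b_i)k_i-a_{i-1}k_{i-1}-b_{i+1}k_{i+1}=\left(a_{i+1}\cdots a_{i+p}-b_{i+1}\cdots b_{i+p}\right)+\left(b_i\cdots b_{i+p-1}-a_i\cdots a_{i+p-1}\right)=0$,
all four products running over a full period; likewise $\prod_i a_i'=\prod_i a_i$ and $\prod_i b_i'=\prod_i b_i$ give the determinant statements.

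The gap is your third family, $b_ia_{i+1}=a_i'b_{i+1}'$, which you defer to ``bookkeeping'' contained in \cite{LP}. It is false for $p\geq 3$, so no bookkeeping can establish it. Take $p=3$, $\mathbf a=(1,1,1)$, $\mathbf b=(2,3,5)$: then $(k_0,k_1,k_2)=(19,16,9)$ and $a_0'b_1'=\frac{a_2k_2}{k_0}\cdot\frac{b_2k_2}{k_1}=\frac{9}{19}\cdot\frac{45}{16}=\frac{405}{304}\neq 2=b_0a_1$; correspondingly the $(1,1)$ entries of $N(z;\mathbf a)N(z;\mathbf b)$ and $N(z;\mathbf b')N(z;\mathbf a')$ are $\frac{1+20z}{(1-z)(1-30z)}$ and $\frac{1+\frac{1024}{57}z}{(1-z)(1-30z)}$, which differ. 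What your band argument does prove is the curl relation with the opposite ordering: conditions (i) and (ii) are unchanged under replacing every $E$ by $-E$, so the whirl identity forces $(I_p-E_a)(I_p-E_b)=(I_p-E_{b'})(I_p-E_{a'})$, and inverting yields $N(z;\mathbf b)N(z;\mathbf a)=N(z;\mathbf a')N(z;\mathbf b')$, i.e.\ the curl commutation holds with $\eta$ applied to the swapped pair. In other words, the obstruction you hit is not computational: the curl identity as printed in Lemma \ref{lem:eta} (same $\eta$, same order as for $M$) is itself misstated for $p\geq 3$ (for $p=2$ the two orderings happen to agree, which is why the discrepancy is invisible in the two-periodic applications), and the corrected, opposite-order form is what the switching arguments behind Proposition \ref{lem:section_matrix_case:interchange} actually require. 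Carried one step further, your own reduction would have detected this; as written, the proposal instead asserts the false scalar identity and therefore does not constitute a proof.
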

To get the switching rule for $M$  and $N$ we 
also need the map
\begin{equation}
\theta({\bf a}, {\bf b}) = ({\bf b'},{\bf a'}),
\end{equation}
where now
\begin{equation}
b'_i = b_{i+1}\frac{a_i+b_i}{a_{i+1}+b_{i+1}} \text{ and } a'_i = a_{i+1}\frac{a_i+b_i}{a_{i+1}+b_{i+1}}.
\end{equation}
\begin{lemma}{\cite[Lem. 6.5, Th 6.6]{LP}} \label{lem:theta} Set $
	({\bf b'}, {\bf a'}) = \theta({\bf a},{\bf b})
	$. Then
		$$
	M(z;{\bf a}) N(z;{\bf b})
	=	N(z;{\bf b'})M(z;{\bf a'}),
$$
	and $\det M(z;{\bf a})= \det M(z;{\bf a'})$ and $\det N(z;{\bf b}) = \det N(z;{\bf b'})$.  
	\end{lemma}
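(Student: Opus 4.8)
The plan is to recast the whirls and curls inside a single small noncommutative algebra, so that the switching rule becomes an elementary matching of coefficients rather than a brute-force $p\times p$ matrix multiplication. Let $S$ denote the weighted cyclic shift with $S_{i,i+1}=1$ for $0\le i\le p-2$, $S_{p-1,0}=z$ and all other entries zero, and for a $p$-tuple ${\bf a}$ write $D_{\bf a}=\diag(a_0,\dots,a_{p-1})$. The first thing I would establish is the uniform encoding
$$M(z;{\bf a})=I_p+D_{\bf a}S,\qquad N(z;{\bf a})=(I_p-D_{\bf a}S)^{-1}.$$
The identity for $M$ is read off directly from \eqref{eq:defMa}. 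For $N$ one checks that $(D_{\bf a}S)^{p}=\bigl(\prod_i a_i\bigr)z\,I_p=az\,I_p$ — the shift wraps around exactly once, contributing a single factor $z$, while the diagonal weights on each diagonal entry multiply to $a$ — so the Neumann series collapses to $(I_p-D_{\bf a}S)^{-1}=\frac{1}{1-az}\sum_{k=0}^{p-1}(D_{\bf a}S)^{k}$, and a comparison of the $(i,j)$ entries with \eqref{eq:defNa} confirms the formula.

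The only commutation relation I then need is that $SD=\sigma(D)S$ for any diagonal $D$, where $\sigma(D)=\diag(d_1,\dots,d_{p-1},d_0)$ cyclically shifts the diagonal; this is a one-line entrywise check. With it, the claimed identity $M(z;{\bf a})N(z;{\bf b})=N(z;{\bf b'})M(z;{\bf a'})$ reduces, after multiplying on the left by $(I_p-D_{\bf b'}S)$ and on the right by $(I_p-D_{\bf b}S)$, to the polynomial identity
$$(I_p-D_{\bf b'}S)(I_p+D_{\bf a}S)=(I_p+D_{\bf a'}S)(I_p-D_{\bf b}S).$$
Expanding both sides and pushing every $S$ to the right, the left side becomes $I_p+(D_{\bf a}-D_{\bf b'})S-D_{\bf b'}\sigma(D_{\bf a})S^2$ and the right side $I_p+(D_{\bf a'}-D_{\bf b})S-D_{\bf a'}\sigma(D_{\bf b})S^2$. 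Because every element of the algebra generated by $S$ and the diagonal matrices has a unique expansion $\sum_{k=0}^{p-1}D_kS^k$ modulo the single relation $S^p=zI_p$, matching coefficients produces exactly the two systems $a_i+b_i=a'_i+b'_i$ and $b'_i\,a_{i+1}=a'_i\,b_{i+1}$ for all $i$.

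I would then solve this system: the second equation says $a'_i:b'_i=a_{i+1}:b_{i+1}$, which together with $a'_i+b'_i=a_i+b_i$ forces precisely the formulas defining $\theta$ and shows they are the unique solution — establishing at once both the existence of the switch and the explicit map. The determinant claims follow cheaply: expanding $\det(I_p+D_{\bf a}S)$ along permutations leaves only the identity and the full $p$-cycle, giving $\det M(z;{\bf a})=1+(-1)^{p-1}az$ and likewise $\det N(z;{\bf b})=(1-bz)^{-1}$ with $b=\prod_i b_i$, so both determinants depend only on the products $\prod_i a_i$ and $\prod_i b_i$; since $\prod_i a'_i=\prod_i a_{i+1}\cdot\prod_i\frac{a_i+b_i}{a_{i+1}+b_{i+1}}=\prod_i a_i$ by cyclic telescoping, and similarly $\prod_i b'_i=\prod_i b_i$, the two determinant identities are immediate.

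The hard part is entirely conceptual: spotting the encoding $M=I_p+D_{\bf a}S$, $N=(I_p-D_{\bf a}S)^{-1}$, which trades the awkward wrap-around factors of $z$ in \eqref{eq:defNa} for clean bookkeeping in the free module with basis $I_p,S,\dots,S^{p-1}$. Once that is in place the computation is routine; the only point needing a moment's care is the degenerate grading at $p=2$, where $S^2=zI_p$ is scalar and the $S^2$-coefficient equation is absorbed into the constant term, but one checks it yields the same two conditions and so causes no trouble.
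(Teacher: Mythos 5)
Your proof is correct in all its steps, and it takes a genuinely different route from the paper: the paper gives no argument for this lemma at all, quoting it directly from Lam--Pylyavskyy \cite[Lem.~6.5, Th.~6.6]{LP}, with Lemma \ref{lem:MinversN} recorded only as a labor-saving remark for the cited proofs. Your encoding $M(z;{\bf a})=I_p+D_{\bf a}S$, $N(z;{\bf a})=(I_p-D_{\bf a}S)^{-1}$ with $(D_{\bf a}S)^p=azI_p$ is exactly what makes that remark transparent (the curl is literally the inverse of a whirl with negated parameters), and it converts the switching rule into coefficient matching in the expansion $\sum_{k=0}^{p-1}D_kS^k$, which is unique because $S^0,\dots,S^{p-1}$ occupy disjoint cyclic diagonals. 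I checked the key points: the entrywise match of $\frac{1}{1-az}\sum_{k=0}^{p-1}(D_{\bf a}S)^k$ with \eqref{eq:defNa}; the relation $SD=\sigma(D)S$; the reduction to the system $a_i+b_i=a'_i+b'_i$, $b'_ia_{i+1}=a'_ib_{i+1}$, whose unique positive solution is precisely the map $\theta$ defined in the paper; and the determinant formulas $\det M(z;{\bf a})=1+(-1)^{p-1}az$, $\det N(z;{\bf b})=(1-bz)^{-1}$ with $a=\prod_ia_i$, $b=\prod_ib_i$, combined with the cyclic telescoping $\prod_ia'_i=a$, $\prod_ib'_i=b$. Your caveat about $p=2$ is the right one, and your resolution is correct: when $S^2=zI_p$ the quadratic term folds into a now $z$-dependent $S^0$-coefficient, but disjointness of supports (with coefficients allowed to depend on $z$) still forces the same two equations. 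What your route buys over the citation: it is self-contained, it establishes uniqueness of the switched parameters rather than mere existence, and the same bookkeeping yields Lemma \ref{lem:MinversN} and the determinant claims of Lemma \ref{lem:eta} essentially for free. What it does not replace is the broader framework of \cite{LP} (in particular total non-negativity of whirls and curls), which the paper relies on elsewhere and which the citation supplies.
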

Note that in proving these results, the amount of work is reduced significantly after realizing the following result. 
\begin{lemma}\label{lem:MinversN} For $i,j=1, \ldots, p$ we have
	$$\left(N(a_0,\ldots,a_{p-1})\right)_{ij}=(-1)^{i-j} \left(M(a_0,\ldots,a_{p-1})^{-1}\right)_{ij}.$$
\end{lemma}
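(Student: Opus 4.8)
The plan is to prove the identity by exploiting the cyclic structure of the whirl matrix. The key observation is that $M(z;{\bf a})$ from \eqref{eq:defMa} can be written as $M(z;{\bf a}) = I_p + C$, where $C$ is the weighted cyclic-shift matrix with $C_{i,i+1} = a_{i-1}$ for $i=1,\ldots,p-1$, single corner entry $C_{p,1} = a_{p-1}z$, and all other entries zero. Because $C$ is supported on one cyclic superdiagonal, its powers stay on a single cyclic diagonal: $(C^r)_{ij}\neq 0$ exactly when $j\equiv i+r \pmod p$, and then it equals the product of the $a$-weights along the unique length-$r$ path in the cycle $1\to 2\to\cdots\to p\to 1$ from $j$ back to $i$, picking up one factor of $z$ each time the path crosses the corner. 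In particular a full loop gives the single scalar relation $C^p = \bigl(\prod_{i=0}^{p-1}a_i\bigr)z\,I_p = az\,I_p$, and it is this relation that drives the whole computation.

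My preferred route is a direct verification. I would set $\hat N_{ij} = (-1)^{i-j}N_{ij}$ with $N$ as in \eqref{eq:defNa}, and check that $M(z;{\bf a})\,\hat N = I_p$ by multiplying out. The numerators of $N$ are precisely the cyclic products of consecutive $a_i$'s described above ($a_{i-1}\cdots a_{j-2}$ on and above the diagonal, and $z\,a_{i-1}\cdots a_{p-1}a_0\cdots a_{j-2}$ below it), so in the product $M\hat N$ each off-diagonal entry is a two-term sum in which the contributions from the diagonal and superdiagonal of $M$ cancel thanks to the alternating sign $(-1)^{i-j}$, while each diagonal entry collapses to $(1-az)$ against the prefactor $\tfrac{1}{1-az}$. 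Conceptually this is the statement that, up to the sign twist by $\diag((-1)^1,\ldots,(-1)^p)$, the Neumann series $(I_p+C)^{-1}=\sum_{k\ge 0}(-1)^kC^k$, grouped by the residue $k \bmod p$ and summed geometrically using $C^p=az\,I_p$, reproduces exactly $\sum_{r=0}^{p-1}C^r$ divided by the scalar $1-az$, which is $N$.

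The main obstacle I anticipate is the careful sign and wrap-around bookkeeping: one must track precisely when a cyclic path passes through the corner (producing the factor $z$, equivalently the regime $j<i$) and reconcile the parity of $r=(j-i)\bmod p$ with the claimed sign $(-1)^{i-j}$ and with the scalar prefactor. Casting everything through the single identity $M=I_p+C$ with $C^p=az\,I_p$ is what makes these cancellations transparent and reduces the lemma to one geometric-series computation, which is why I would favour the structural formulation over a blind entrywise multiplication. Once the identity is in hand, it indeed shortens the proofs of Lemmas \ref{lem:eta} and \ref{lem:theta}, since commutation relations for the curls $N$ can then be transported from those for the whirls $M$ by taking inverses and conjugating by the fixed sign-diagonal.
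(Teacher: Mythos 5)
The paper states this lemma without proof (it is offered only as an observation that shortens the proofs of Lemmas \ref{lem:eta} and \ref{lem:theta}), so the only question is whether your verification goes through --- and it does not, for odd $p$. Your structural setup is correct: $M(z;{\bf a}) = I_p + C$ with $C$ the weighted cyclic shift, $C^p = az\,I_p$, and $N(z;{\bf a}) = \frac{1}{1-az}\sum_{r=0}^{p-1}C^r$. The decisive step, however --- the ``two-term cancellations'' in $M\widehat N$, where $\widehat N_{ij} = (-1)^{i-j}N_{ij}$ --- is exactly the step you defer as ``the main obstacle I anticipate'', and it fails in the wrap-around row. In rows $i<p$ everything cancels as you say, because there $N_{ij} = a_{i-1}N_{i+1,j}$ and the signs $(-1)^{i-j}$ and $(-1)^{(i+1)-j}$ are opposite (note these rows pair the diagonal of $M$ with its superdiagonal). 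But row $p$ pairs the diagonal with the corner entry $M_{p,1} = a_{p-1}z$, and there one finds
\[
(M\widehat N)_{pj} = \bigl[(-1)^{p-j}+(-1)^{1-j}\bigr]\,\frac{z\,a_{p-1}a_0\cdots a_{j-2}}{1-az}\ \ (j<p),
\qquad
(M\widehat N)_{pp} = \frac{1+(-1)^{1-p}az}{1-az},
\]
which equal $0$ and $1$ if and only if $p$ is even. For $p=3$ with all $a_i=1$, for instance, $(M\widehat N)_{31} = 2z/(1-z) \neq 0$.

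Equivalently, in your conjugation language: with $D = \diag\bigl((-1)^1,\ldots,(-1)^p\bigr)$ one has $DCD^{-1} = -C$ only when the corner sign $(-1)^{p-1}$ equals $-1$, i.e.\ when $p$ is even; for odd $p$, $DCD^{-1}$ is $-C$ with $z$ replaced by $-z$, and your geometric series then produces the prefactor $1/\bigl(1-(-1)^p az\bigr)$ rather than $1/(1-az)$. So this is not a bookkeeping nuisance: the identity you set out to verify (and the lemma as literally stated) is false for odd $p$, which matters since the paper also works with $p=3$. What your method does prove, uniformly in $p$, is
\[
M(z;{\bf a})^{-1} = N\bigl(z;(-a_0,\ldots,-a_{p-1})\bigr),
\qquad\text{equivalently}\qquad
N(z;{\bf a})_{ij} = (-1)^{i-j}\bigl(M((-1)^p z;{\bf a})^{-1}\bigr)_{ij},
\]
since $(I_p+C)^{-1} = \sum_{k\ge 0}(-C)^k$ and $-C$ is the cyclic-shift matrix for the parameters $(-a_0,\ldots,-a_{p-1})$, with $(-C)^p = (-1)^p az\,I_p$. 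You should either prove this corrected statement or restrict to even $p$; as written, the asserted cancellation is precisely where the argument breaks.
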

Lemmas \ref{lem:eta} and \ref{lem:theta} are sufficient for providing switching rules in case we switch two up jumps, or two down jumps. For switching mixed terms, we need the shift operator 
	\begin{equation}\label{eq:shift}
S(z) = 
\begin{pmatrix}
0 & 0 & \cdots & 0 & z^{-1} \\
1 & 0 & \cdots & 0 & 0 \\
\vdots & \vdots & \ddots & \cdots & \\
0 & 0 & \cdots & 1 & 0
\end{pmatrix},
\end{equation}
and the permutation operator
$$\sigma \left((a_0,\ldots,a_{p-1})\right)= (a_1,\ldots,a_{p-1},a_0).$$
Then the following are straightforward.

\begin{lemma} \label{lem:commutatingshift} We have 
\begin{enumerate}
	\item $S(z)^{-1} M(z;{\bf a}) S(z)= M(z;\sigma({\bf a})). $
	\item $S(z)^{-1} N(z;{\bf a}) S(z) = N(z;\sigma({\bf a})). $
		\item $S(z)^{-1} B({\bf b}) S(z)= B(\sigma({\bf b})). $
\end{enumerate}	
\end{lemma}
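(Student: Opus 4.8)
The plan is to deduce all three identities from the single relation (3), which is itself an elementary computation, and then to recognize conjugation by $S(z)$ as a ring homomorphism acting on convenient representations of $M$ and $N$.

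First I would make the action of the shift explicit: $S(z)$ sends the standard basis vector $e_j$ to $e_{j+1}$ for $j<p$ and $e_p$ to $z^{-1}e_1$, so that $S(z)^{-1}$ has $1$'s on the superdiagonal together with a single entry $z$ in position $(p,1)$. Conjugating the diagonal matrix $B({\bf b})$ by this twisted cyclic shift simply relabels its diagonal entries cyclically; the twist factors $z^{-1}$ and $z$ meet only at the corner and cancel, because $B({\bf b})$ is diagonal, so no $z$-dependence survives. A one-line entrywise check then gives $S(z)^{-1}B({\bf b})S(z)=B(\sigma({\bf b}))$, i.e.\ part (3). Equivalently, $S(z)^{-1}B({\bf a})=B(\sigma({\bf a}))S(z)^{-1}$, which is the form I will use below.

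Next I would record two representations. Directly from \eqref{eq:defMa} one has $M(z;{\bf a})=I_p+B({\bf a})\,S(z)^{-1}$. For the curl I would observe that $\bigl(B({\bf a})S(z)^{-1}\bigr)^p=a\,z\,I_p$, since traversing the cycle $1\to 2\to\cdots\to p\to 1$ collects all the factors $a_i$ together with exactly one factor $z$ from the corner entry; summing the resulting finite geometric series yields the compact form $N(z;{\bf a})=\bigl(I_p-B({\bf a})S(z)^{-1}\bigr)^{-1}$, which one checks agrees with \eqref{eq:defNa}.

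Finally, since $X\mapsto S(z)^{-1}XS(z)$ respects sums, products and inverses, and since by the commutation relation it sends $B({\bf a})S(z)^{-1}$ to $B(\sigma({\bf a}))S(z)^{-1}$, I obtain
$$S(z)^{-1}M(z;{\bf a})S(z)=I_p+B(\sigma({\bf a}))S(z)^{-1}=M(z;\sigma({\bf a}))$$
and
$$S(z)^{-1}N(z;{\bf a})S(z)=\bigl(I_p-B(\sigma({\bf a}))S(z)^{-1}\bigr)^{-1}=N(z;\sigma({\bf a})),$$
where in the last step I use that $a=\prod_{i}a_i$ is invariant under the cyclic permutation $\sigma$, so the scalar data entering $N$ is unchanged. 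The only point requiring care is the $z$-bookkeeping at the corner---both in verifying (3) and in the identity $\bigl(B({\bf a})S(z)^{-1}\bigr)^p=azI_p$---but this is routine, which is precisely why parts (1) and (2) collapse to the homomorphism property once the two representations are in hand.
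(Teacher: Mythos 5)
Your proof is correct. The paper offers nothing beyond ``follows easily by direct verification'', i.e.\ an entrywise check of all three conjugation identities, so your organization is genuinely different: you verify only the diagonal identity (3) entrywise, record the two representations $M(z;{\bf a})=I_p+B({\bf a})S(z)^{-1}$ and $N(z;{\bf a})=\bigl(I_p-B({\bf a})S(z)^{-1}\bigr)^{-1}$, and then let the fact that $X\mapsto S(z)^{-1}XS(z)$ preserves sums, products and inverses do the remaining work. All the ingredients check out: $S(z)^{-1}$ has ones on the superdiagonal and $z$ in the $(p,1)$ corner; the matrix $X=B({\bf a})S(z)^{-1}$ has $a_0,\dots,a_{p-2}$ on the superdiagonal and $a_{p-1}z$ in the corner, so $M(z;{\bf a})=I_p+X$ is immediate from \eqref{eq:defMa}; and $X^p=az\,I_p$, whence $(I_p-X)\sum_{k=0}^{p-1}X^k=(1-az)I_p$, which identifies $\frac{1}{1-az}\sum_{k=0}^{p-1}X^k$ with the matrix in \eqref{eq:defNa} and yields the resolvent form of $N$. (It is cleaner to state this as the polynomial identity just written rather than as a summed geometric series, so that no restriction $|az|<1$ is needed, but that is cosmetic.) Your route buys two things: the curl $N$ --- the only dense matrix in the lemma, and the tedious part of any entrywise verification --- is never manipulated entry by entry; and the $\sigma$-invariance of the prefactor $1/(1-az)$, which a direct check of (2) must observe separately, comes out automatically since $a=\prod_i a_i$ is unchanged under cyclic permutation (you note this). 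The paper's approach buys only brevity; for the sparse matrices $M$ and $B$ the direct check is equally short. As a side remark, the representation $N=\bigl(I_p-B({\bf a})S(z)^{-1}\bigr)^{-1}$ is of independent interest, being closely related to the statement of Lemma \ref{lem:MinversN}.
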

\begin{proof}
 Follow easily by direct verification.
\end{proof}
\begin{lemma} \label{lem:mixetatheta} With ${\bf a}^{-1}= \left(a_0^{-1},\ldots,a_{p-1}^{-1}\right) $  we have
	\begin{enumerate}
		\item$
	M(1/z;{\bf a})^T 
		=M(z;{\bf a}^{-1}) S(z)B({\bf a}) 
		= B(\sigma({\bf a})) S(z)M(z;{\bf a}^{-1}). 
			$
		\item $
	N(1/z;{\bf a})^T 
	=N(z;{\bf a}^{-1}) S(z)^{-1}B(\sigma({\bf a}^{-1})) 
	= B({\bf a}^{-1}) S(z)^{-1}N(z;{\bf a}^{-1}). 
	$
	
	\end{enumerate}	
\end{lemma}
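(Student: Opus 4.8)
The plan is to trade the explicit (but index-heavy) matrices for one compact operator identity. Reading off \eqref{eq:defMa} and \eqref{eq:shift}, I would first record that the whirl is exactly the identity plus a diagonal times the inverse shift,
$$M(z;{\bf a}) = I_p + B({\bf a})\,S(z)^{-1},$$
and that, since $\bigl(S(1/z)^{-1}\bigr)^T = S(z)$, taking $z \mapsto 1/z$ and transposing gives
$$M(1/z;{\bf a})^T = I_p + S(z)\,B({\bf a}).$$
With these two lines the first equality of part (1) is immediate: substituting the first display (for ${\bf a}^{-1}$) into the right-hand side,
$$M(z;{\bf a}^{-1})\,S(z)\,B({\bf a}) = \bigl(I_p + B({\bf a}^{-1})S(z)^{-1}\bigr)S(z)B({\bf a}) = I_p + S(z)B({\bf a}),$$
where I only use $S(z)^{-1}S(z) = I_p$ and $B({\bf a}^{-1})B({\bf a}) = I_p$.

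For the second factorization in part (1) I would again insert $M(z;{\bf a}^{-1}) = I_p + B({\bf a}^{-1})S(z)^{-1}$, now with the prefactor diagonal on the left, and push that diagonal through the shift using Lemma \ref{lem:commutatingshift}(3) in the form $S(z)B({\bf b}) = B(\sigma^{-1}({\bf b}))S(z)$. The cross term then collapses to a constant diagonal, and matching it against $I_p$ fixes the prefactor. Here the one genuinely delicate point is the \emph{direction} of the cyclic permutation $\sigma$: since $\sigma$ and $\sigma^{-1}$ agree only when $p=2$, I would determine the correct shift by an explicit $p=2$ and $p=3$ check rather than by guessing, to guard against an off-by-one in the wraparound index.

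For part (2) I would not recompute from scratch but instead transport the two identities of part (1) through the correspondence of Lemma \ref{lem:MinversN}, which in matrix form reads $N(z;{\bf a}) = \Sigma\,M(z;{\bf a})^{-1}\,\Sigma$ with $\Sigma = \diag(1,-1,1,\dots)$ and $\Sigma^2 = I_p$. Inverting the part (1) factorizations of $M(1/z;{\bf a})^T$ and conjugating by $\Sigma$ — using $N(1/z;{\bf a})^T = \Sigma\,\bigl(M(1/z;{\bf a})^T\bigr)^{-1}\,\Sigma$, that $\Sigma$ commutes with every diagonal $B$, and $\Sigma\,M(z;{\bf a}^{-1})^{-1}\,\Sigma = N(z;{\bf a}^{-1})$ — turns $S(z)$ into $S(z)^{-1}$ and $B({\bf a})$ into the shifted diagonals appearing in the claim. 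The signs carried by the conjugation $\Sigma\,S(z)^{\pm1}\,\Sigma$ must be tracked explicitly, since $\Sigma\,S(z)^{-1}\,\Sigma$ equals $-S(z)^{-1}$ when $p$ is even but has mixed signs when $p$ is odd; reconciling these with the stated form is precisely the bookkeeping I would carry out with care.

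Thus the analytic content is trivial and everything reduces to the single observation $M = I_p + B\,S^{-1}$ together with the commutation relations already in hand. I expect the main obstacle to be \emph{purely combinatorial}: keeping the modular indexing consistent across the wraparound factors of $z$ in $M$, $N$, and $S$, getting the orientation of $\sigma$ right when commuting a diagonal past the shift, and pinning down the alternating signs produced by the $\Sigma$-conjugation in part (2). Small-$p$ verifications are the sanity checks I would run at each step to make sure no stray sign or shift has slipped in.
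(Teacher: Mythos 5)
Your key identity $M(z;{\bf a}) = I_p + B({\bf a})\,S(z)^{-1}$, together with $\bigl(S(1/z)^{-1}\bigr)^T = S(z)$ and hence $M(1/z;{\bf a})^T = I_p + S(z)B({\bf a})$, is correct, and it gives a complete and clean proof of the \emph{first} equality in part (1). Your overall route is also the same as the paper's: its proof consists of the words ``direct verification'' plus the remark that part (2) follows from part (1) and Lemma \ref{lem:MinversN}, which is exactly your $\Sigma$-conjugation (with $\Sigma=\diag(1,-1,1,\dots)$); you merely organize the verification through the shift operator, which is tidier.

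The genuine gap is that the two computations you postpone are not routine bookkeeping that ends up confirming the statement: carried out with the paper's conventions, they refute it, so the proposal cannot terminate in the promised ``reconciliation.'' For the second identity in (1), your own matching argument, using $S(z)B({\bf b}) = B(\sigma^{-1}({\bf b}))S(z)$, gives $B({\bf d})S(z)M(z;{\bf a}^{-1}) = B({\bf d})S(z) + B({\bf d})B(\sigma^{-1}({\bf a}^{-1}))$, and comparing with $I_p + B(\sigma^{-1}({\bf a}))S(z)$ forces ${\bf d}=\sigma^{-1}({\bf a})$, i.e.
\begin{equation}
M(1/z;{\bf a})^T = B(\sigma^{-1}({\bf a}))\,S(z)\,M(z;{\bf a}^{-1}),
\end{equation}
whereas the lemma asserts the prefactor $B(\sigma({\bf a}))$; these agree only for $p\le 2$. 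Your $p=2$ check would pass while your $p=3$ check would fail (for $p=3$ the stated right-hand side has $(1,1)$ entry $a_1/a_2\neq 1$), so the ``off-by-one in the wraparound'' you worry about is real and sits in the statement, not in your algebra. Likewise in part (2): transporting (1) through Lemma \ref{lem:MinversN} yields $N(1/z;{\bf a})^T = B({\bf a}^{-1})\bigl(\Sigma S(z)^{-1}\Sigma\bigr)N(z;{\bf a}^{-1})$, and $\Sigma S(z)^{-1}\Sigma$ is never $S(z)^{-1}$: it equals $-S(z)^{-1}$ for $p$ even and $S(z)^{-1}\diag(1,-1,\dots,-1)$ for $p$ odd. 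Already for $p=2$ a direct computation gives $B({\bf a}^{-1})S(z)^{-1}N(z;{\bf a}^{-1}) = -\,N(1/z;{\bf a})^T$, a global sign off from the claim. So your method is sound and would in fact expose these discrepancies, but a finished proof must either correct the statement (replace $\sigma$ by $\sigma^{-1}$ in (1) and insert the sign, respectively diagonal sign matrix, in (2)) or change the conventions for $\sigma$ and $S$; the lemma as printed is not provable verbatim for general $p$, which the paper's one-line proof glosses over.
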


\begin{proof}
	Follows by direct verification. Note also that 2. follows from 1. and Lemma \ref{lem:MinversN},
\end{proof}
We are now ready to state an important claim that will be the key to find a factorization of the type \eqref{eq:WHfact} in the setting of our paper. 
\begin{proposition}\label{lem:section_matrix_case:interchange}
	Let $ \phi(z) $ and $ \varphi(z) $ be transition matrices corresponding to a Bernoulli step up or down or a geometric step up or down. Then there are matrices $ \phi'(z) $ and $ \varphi'(z) $ of the same type (meaning that if $\phi(z)$ and $\varphi(z)$ are of the type (2.x) and (2.y) respectively, then so are $\phi'(z)$ and $\varphi'(z)$)  such that
	\begin{equation}
	\phi(z)\varphi(z) = \varphi'(z)\phi'(z),
	\end{equation}
	and
	\begin{equation}
	\det \phi(z) = \det \phi'(z), \text{ and } \det \varphi(z) = \det \varphi'(z).
	\end{equation}
\end{proposition}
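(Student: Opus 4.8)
The plan is to prove the statement by reducing all sixteen ordered pairs of types to the commutation relations of Lemmas \ref{lem:eta}, \ref{lem:theta}, \ref{lem:commutatingshift} and \ref{lem:mixetatheta}, treated uniformly. The first step is to bring every factor into a normal form in which the core whirl or curl is written in the variable $z$ rather than $1/z$. For a step up this is already the case, since $\phi^{b,\uparrow}=M(z;{\bf a})B({\bf b})$ and $\phi^{g,\uparrow}=N(z;{\bf a})B({\bf b})$. For a step down I invoke the second identities of Lemma \ref{lem:mixetatheta} to write $M(1/z;{\bf a})^TB({\bf b})=B(\sigma({\bf a}))S(z)M(z;{\bf a}^{-1})B({\bf b})$ and $N(1/z;{\bf a})^TB({\bf b})=B({\bf a}^{-1})S(z)^{-1}N(z;{\bf a}^{-1})B({\bf b})$. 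After this reduction each transition matrix is a product of diagonal matrices, a shift $S(z)^{\pm1}$ in the down cases, and a single core $M(z;\cdot)$ or $N(z;\cdot)$; the up/down attribute is then recorded solely by the presence and sign of the shift factor.

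Two elementary moves now let me reorganize such products. The first is the commutation of a diagonal past a core, which a direct computation gives as $B({\bf b})M(z;{\bf a})=M(z;{\bf a}')B({\bf b})$ with $a_i'=b_ia_i/b_{i+1}$, and analogously for $N$; this preserves the type of the core and the product $\prod_i a_i$, hence the determinant of the core. The second is Lemma \ref{lem:commutatingshift}, which slides $S(z)^{\pm1}$ past any core or diagonal at the cost of a cyclic relabeling $\sigma^{\pm1}$ of its parameters, again preserving types and parameter products. Using these two moves I push all diagonal and shift factors out of the way inside $\phi(z)\varphi(z)$ and place the two cores side by side. I then switch the adjacent cores: if they are both whirls or both curls I apply $\eta$ of Lemma \ref{lem:eta}, and if one is a whirl and the other a curl I apply $\theta$ of Lemma \ref{lem:theta} when they appear in the order $M,N$, and its inverse when they appear in the order $N,M$. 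These switches preserve the determinants of the individual cores. Finally I slide the shift and diagonal factors back and, wherever the output type is a step down, recombine a core, a shift and a diagonal into a transposed whirl or curl by reading Lemma \ref{lem:mixetatheta} backwards, producing $\varphi'(z)\phi'(z)$ with $\varphi'$ of the same type as $\varphi$ and $\phi'$ of the same type as $\phi$.

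The point I expect to be the crux is the determinant bookkeeping. The identity $\phi(z)\varphi(z)=\varphi'(z)\phi'(z)$ only constrains the product of the two output diagonal normalizations, not their individual values, so the stronger conditions $\det\phi=\det\phi'$ and $\det\varphi=\det\varphi'$ do not follow from the structural switching alone; indeed the naive choice of lumping all diagonal weight into one factor fails them. I resolve this by exploiting the freedom to redistribute diagonal weight: before recombining, I split a trailing diagonal and slide part of it back across the shift and the second core, picking up only cyclic relabelings of its entries, which leave the product of its entries invariant, until the two output steps carry exactly the diagonal products of the two inputs demanded by $\det\phi$ and $\det\varphi$. The remaining determinant constraints on the cores are then automatic, since every core move preserves $\prod_i a_i$.

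The second delicate point is the shift tracking in the mixed up/down and down/down cases, where one or two factors $S(z)^{\pm1}$ must be threaded between the cores and then recombined into transposed cores of precisely the right kind; here Lemma \ref{lem:MinversN} is used to keep whirls and curls aligned, and one must also verify that $\theta$ is invertible on the positive orthant so that the ordering with a curl preceding a whirl can likewise be switched. By contrast, the cases of two steps up, or two steps of the same $M$/$N$ kind, are routine, as no shift appears and $\eta$ applies directly. Finally, transposition combined with $z\mapsto 1/z$, which exchanges the up and down versions of a fixed kind up to commuting a diagonal factor, can be used to reduce the number of cases requiring explicit verification by roughly half.
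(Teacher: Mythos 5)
Your proposal is correct and follows essentially the same route as the paper: the paper's proof is precisely this reduction to the whirl/curl switching relations, invoking Lemmas \ref{lem:eta} and \ref{lem:theta} for the up/up case (and, after transposing and replacing $z$ by $1/z$, the down/down case) and Lemma \ref{lem:mixetatheta} followed by Lemmas \ref{lem:eta} and \ref{lem:theta} for the mixed cases, with all remaining details explicitly left to the reader. The points you isolate --- commuting diagonals past cores, tracking the shifts $S(z)^{\pm 1}$ via Lemma \ref{lem:commutatingshift}, invertibility of $\theta$ on the positive orthant (which indeed holds, since $\theta$ preserves each sum $a_i+b_i$ and is therefore explicitly invertible there), and redistributing diagonal weight so that each output factor inherits the correct individual determinant rather than only the correct product --- are exactly those omitted details, and you handle them correctly.
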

\begin{proof}
For switching $\phi^{\cdot,\uparrow}$ and $\phi^{*,\uparrow}$ we use Lemma's \ref{lem:eta} and \ref{lem:theta}. The same lemmas are used for  $\phi^{\cdot,\downarrow}$ and $\phi^{*,\downarrow}$, but now with the transpose everywhere and $z$ replaced by $1/z$. Finally, to switch $\phi^{\cdot,\uparrow}$ and $\phi^{*,\downarrow}$ (and vice versa) we use first Lemma \ref{lem:mixetatheta} and then Lemma's \ref{lem:eta} and \ref{lem:theta}. We leave the details to the reader. 
\end{proof}

	\subsection{Existence of the factorization}
With the switching rules from the previous section  it is now easy to give a constructive proof, for general $p>1$, of the fact that a weight  admits a factorization \eqref{eq:WHfact} precisely when the winding number of the determinant of the weight equals $pM$. 
\begin{theorem}\label{thm:p-periodic_admissible}
	Consider
$
	\phi(z) = \prod_{m=1}^{N}\phi_m(z),
	$
	where  each $ \phi_m $ is as in one of the forms given in \eqref{eq:matrixjump1}-\eqref{eq:matrixjump4} for $ m = 1,\cdots,N $. Then $ \phi $ admits a factorization \eqref{eq:WHfact} with the desired properties if and only if the winding number of $ \det \phi(z) $ with respect to the unit circle equals $pM$.
\end{theorem}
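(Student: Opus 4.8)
The statement is an "if and only if," and I would handle the two directions separately. The necessity direction is the easy one: if a factorization $\phi(z)=\phi_+(z)\phi_-(z)$ as in \eqref{eq:WHfact} exists, then taking determinants gives $\det\phi(z)=\det\phi_+(z)\det\phi_-(z)$. By properties (1)--(3), $\det\phi_+$ is analytic and nonvanishing inside the disk (since $\phi_+^{\pm1}$ are both analytic there), so it has winding number $0$, while $\det\phi_-\sim (\det I_p)\,z^{pM}=z^{pM}$ at infinity and is analytic and nonvanishing outside, contributing winding number $pM$. Hence the winding number of $\det\phi$ equals $pM$. This uses only the multiplicativity of the determinant and the argument principle.

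**The constructive direction.** For sufficiency I would give an explicit algorithm built on the switching rules. Each $\phi_m$ is one of the four types \eqref{eq:matrixjump1}--\eqref{eq:matrixjump4}, i.e. a Bernoulli/geometric step up or down, and each such factor is analytic and invertible either strictly inside or strictly outside the unit disk (up to the scalar prefactors $z^{\pm\cdots}$ that I track separately). The geometric-up factors $N(z;\mathbf a)B(\mathbf b)$ are regular outside, the geometric-down factors are regular inside, and the Bernoulli factors are regular except for a pole at $0$ or $\infty$. The idea is to sort the product $\phi=\prod_{m=1}^N\phi_m$ so that all factors regular inside the disk are collected on one side and all factors regular outside on the other, using Proposition \ref{lem:section_matrix_case:interchange} to interchange adjacent factors of mismatched type. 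Crucially, Proposition \ref{lem:section_matrix_case:interchange} guarantees that a swap $\phi\varphi=\varphi'\phi'$ produces new factors $\phi',\varphi'$ \emph{of the same type}, so the total multiset of "inside-regular" and "outside-regular" factors is preserved under each interchange, and the procedure terminates (a standard bubble-sort argument, with each swap strictly reducing the number of out-of-order adjacent pairs). After sorting I would collect the inside-regular block as a candidate $\phi_+$ and the outside-regular block as $\phi_-$, absorbing the scalar monomial shifts into the powers of $z$.

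**Reconciling the shift and the winding number.** The remaining point is that the asymptotic normalization (3), $\phi_-\sim I_p z^M$, must come out correctly; this is exactly where the winding-number hypothesis enters. After sorting, the factor that is regular outside has determinant behaving like $z^{d}$ at infinity for some integer $d$ determined by how many geometric-down versus Bernoulli-down (and analogously up) factors ended up on each side, together with the monomial prefactors; by multiplicativity of $\det$ this integer $d$ equals the total winding number of $\det\phi$ inside the disk, which the hypothesis fixes to be $pM$. One then checks that the per-block normalization $z^M I_p$ is achievable precisely when $d=pM$, since $\det(I_p z^M)=z^{pM}$. In the scalar case $p=1$ this reduces exactly to the bookkeeping of Section \ref{sec:factorization_p1}, where $M=M_1-M_2$; for $p>1$ the same counting holds blockwise. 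The second factorization $\phi=\widetilde\phi_-\widetilde\phi_+$ is obtained by the mirror-image sorting, collecting outside-regular factors on the left instead of the right.

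**The main obstacle.** I expect the genuinely delicate step to be verifying that the sorted inside-block really satisfies \emph{both} $\phi_+$ and $\phi_+^{-1}$ analytic and continuous up to $|z|\le 1$ (and symmetrically for $\phi_-$), i.e. that no poles or zeros of the collected factors sit on or straddle the unit circle, and that the $z$-prefactor bookkeeping assembles into the clean $I_p z^M$ asymptotics rather than merely $z^{pM}$ in the determinant. The type-preservation in Proposition \ref{lem:section_matrix_case:interchange} is what makes the sort well-defined, but turning the sorted product into honest analytic factors satisfying (1)--(3) -- as opposed to just a rearranged product -- is where the argument requires care; the winding-number condition is precisely the integer constraint that permits this final matching.
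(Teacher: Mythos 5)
Your necessity argument (determinants plus the argument principle) and your overall plan for sufficiency (bubble-sort the factors via Proposition \ref{lem:section_matrix_case:interchange}, then normalize) follow the same route as the paper. But there is a genuine gap exactly at the step you defer with ``one then checks that the per-block normalization $z^M I_p$ is achievable precisely when $d=pM$.'' That implication is false as stated: knowing that the determinant of the outside-sorted block behaves like $z^{pM}$ at infinity does not imply the block itself behaves like $Cz^{M}(I_p+o(1))$ there. For instance $\diag(z^{M+1},z^{M-1})$ has determinant $z^{2M}$ but violates condition 3 following \eqref{eq:WHfact}. The matrix normalization is strictly stronger than any determinant-level bookkeeping, and this is precisely where the whole difficulty of the case $p>1$ is concentrated, so it cannot be left as an unproved matching claim.

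The paper closes this gap with a mechanism your proposal never invokes: the shift matrix $S(z)$ of \eqref{eq:shift}. After the sort, the only obstructions to conditions 1--3 are poles and zeros at $0$ and $\infty$, which come from the Bernoulli factors. Lemma \ref{lem:mixetatheta} converts a Bernoulli step down into a Bernoulli step up at the cost of one explicit factor $S(z)^{\pm 1}$, and Lemma \ref{lem:commutatingshift} shows that conjugation by $S$ preserves the type of every factor (it only permutes parameters), so all accumulated shifts can be commuted into a single spot, yielding $\phi=\bigl(\prod_{m\in I_1}\phi_m'\bigr)S(z)^{\ell_1-\ell_2}\bigl(\prod_{m\in I_2}\phi_m'\bigr)$ with both outer products regular, together with their inverses, in the closed disk respectively its complement. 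Taking determinants, the winding-number hypothesis forces $\ell_1-\ell_2=pM$, and then the identity $S(z)^{pM}=z^{M}I_p$ converts the matrix-valued shift into the scalar $z^{M}$, which is exactly what condition 3 needs after multiplying by a constant matrix $C$. This is the missing idea. Two smaller slips: your classification of the geometric steps is reversed ($\phi^{g,\uparrow}$, with its pole at $1/a$, $a<1$, is regular \emph{inside} the closed disk together with its inverse, and $\phi^{g,\downarrow}$ outside); and the Bernoulli factors are not ``regular except for a pole at $0$ or $\infty$'' --- their inverses have poles at the zero of their determinant, which may lie on either side of the unit circle, and it is this zero (not the pole at $0$ or $\infty$) that dictates into which block each Bernoulli factor must be sorted.
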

\begin{proof}
	One direction is straightforward. Suppose $ \phi $ has a factorization \eqref{eq:WHfact} with the desired properties. Then using the factorization and the asymptotic behavior at infinity  we obtain  from the argument principle that the winding number  of $ \det \phi $  equals $pM$.
	
	Now let us consider the other direction. We start with  $\phi(z)= \prod_{m=1}^N\phi_m$ and we search for a factorization $\phi= \phi_+ \phi_-$ (we will leave the other factorization to the reader, as it follows by similar arguments).   Each of the $\phi_m$ is one of the four options $\phi^{b,\uparrow}$, $\phi^{b,\downarrow}$, $\phi^{g,\uparrow}$ and $\phi^{g,\downarrow}$ as given in \eqref{eq:matrixjump1}--\eqref{eq:matrixjump4}. Our proof will be relying on a reordering of all these terms based on the switching rules from the previous section. First we will need to investigate which terms are regular inside the circle and which ones outside. 
	
	Let us start with $\phi^{g,\uparrow}(z;{\bf a},{\bf b})$. This term has a pole  at $z=1/a$, but no others. Since we assumed $a<1$ this pole lies outside the circle. Its inverse has a pole at $\infty$ and no others. In other words, $\phi^{g,\uparrow}(z;{\bf a},{\bf b})$ and its inverse are analytic inside the unit  disk. Similarly, the term $\phi^{g,\downarrow}(z;{\bf a},{\bf b})$ has  a pole  at $z=a$, but no others.  Its inverse has a pole at $0$ and no others. This means that $\phi^{g,\downarrow}(z;{\bf a},{\bf b})$ and its inverse are analytic outside the unit  disk. So far we see that we want to switch all $ \phi^{g,\uparrow}(z;{\bf a},{\bf b})$  to the left and all  $\phi^{g,\downarrow}(z;{\bf a},{\bf b})$ to the right. 
	
	The terms  $\phi^{b,\uparrow}(z;{\bf a},{\bf b})$ and $\phi^{b,\downarrow}(z;{\bf a},{\bf b})$ are slightly more subtle. The term $\phi^{b,\uparrow}(z;{\bf a},{\bf b})$ has a pole at $\infty$ and its inverse at   $z=1/a$. Since here  we do not assume $a=a_0\cdots a_{p-1}<1$, the pole $z=1/a$ could be inside and outside. Similarly,  $\phi^{b,\downarrow}(z;{\bf a},{\bf b})$ has a pole at $0$ and its inverse at   $z=a$. 
	
	For now we will ignore the possible singularities at $0$ and $\infty$ and switch all the terms based on the other singularities. Thus, let $ I_1 $ be the $ m $ such that $ \phi_m $ is singular or has a singularity outside the unit circle and $ I_2 $ the $ m $ such that $ \phi_m $ is singular or has a singularity inside the unit circle, with possible exceptions at $z=0$ and $z= \infty$. Note that $ I_1 $ and $ I_2 $ are disjoint and $ I_1\cup I_2 = \{1,\cdots, N\} $. By Lemma \ref{lem:section_matrix_case:interchange} there are $ \varphi_m(z) $, $ m = 1,\cdots,N $, such that
	\begin{equation}\label{eq:phiinvarphi}
	\phi(z) =\prod_{m\in I_1}\varphi_m(z)\prod_{m\in I_2}\varphi_m(z),
	\end{equation}
	and
	\begin{equation}
	\det \phi_m(z) = \det \varphi_m(z).
	\end{equation}
	Note that we can ensure that each type is preserved (i.e. if $\phi_m$ is of the type $\phi^{b,\uparrow}$ then so is $\varphi_m$ but with different parameters). 
	
	This is not yet the factorization that we are after, since the factors (or their inverses) could still contain singularities at $z=0$ and $z= \infty$.  At this point it is important to note that we can turn a Bernoulli step down into a Bernoulli step up (and vice versa), at the cost of a shift as was stated in Lemma \ref{lem:mixetatheta}.	For $m \in I_1$ we write $ \varphi_m = \varphi'_m $  if $ \varphi_m $ is a Bernoulli step down and $ \varphi_m = \varphi'_m S $ otherwise.  Similarly,	for $m \in I_2$ we write  $ \varphi_m = \varphi'_m S ^{-1}$ if $ \varphi_m $ is a Bernoulli step up and $ \varphi_m = \varphi'_m $ otherwise.  A conjugation of $ \varphi_m' $ with $ S $ does not change the determinant or the type, but just shuffles parameters around (see Lemma's \ref{lem:commutatingshift} and \ref{lem:mixetatheta}). We can therefore move all factors $ S $ to one place. That is, there exists $\phi_m'$  (that are of the same type as $\varphi_m'$ and that can be constructed explicitly using Lemma's \ref{lem:commutatingshift} and \ref{lem:mixetatheta} iteratively) such that 
	\begin{equation}\label{eq:section_matrix_case:factorization_of_weight}
	\phi(z)
	=\left(\prod_{m\in I_1}\phi_m'(z)\right)S(z)^{\ell_1-\ell_2} \left(\prod_{m\in I_2}\phi_m'(z)\right),
	\end{equation}
	where 
$$
	\ell_1 = \#\{m\in I_1:\phi_m \text{ corresponds to a Bernoulli step down}\}
$$
and
$$
	\ell_2 = \#\{m\in I_2:\phi_m \text{ corresponds to a Bernoulli step up}\}.
$$
Note $ \phi'_m $ is a Bernoulli or geometric step up when $ m \in I_1 $ and a Bernoulli or geometric step down when $ m \in I_2 $. 
	
  Since  the winding number of 
	\begin{equation}
	\det \phi(z) = z^{(\ell_1-\ell_2)}\det\prod_{m\in I_1}\phi_m'(z)\det \prod_{m\in I_2}\phi_m'(z), 
	\end{equation}
	is equal to $pM$ by assumption, the argument principle implies  that $ \ell_1-\ell_2 = pM $. Now note that $ S(z)^{pM} = z^MI $  and thus 
		\begin{equation}\label{eq:section_matrix_case:factorization_of_weight1}
		\phi(z)
	= z^{M}\prod_{m\in I_1}\phi_m'(z) \prod_{m\in I_2}\phi_m'(z),
	\end{equation} 
	We now find the desired factorization $\phi=\phi_+ \phi_-$ by 
	\begin{equation}
	\phi_+(z) =\left(\prod_{m\in I_1}\phi_m'(z)\right) C,
	\end{equation}
	and
	\begin{equation}
	\phi_-(z) = z^{M} C^{-1} \prod_{m\in I_2}\phi_m'(z),
	\end{equation}
	where $ C $ is a normalizing factor, so that $ \phi_-(z) \to z^{M}I_p$ as $ z \to \infty $. \end{proof}
	
\subsection{Constructing the factorization}\label{sec:method_factorization} 

We finalize our discussion on the factorization \eqref{eq:WHfact} by putting everything together and presenting a general strategy based on the discussion above.

Suppose that we want to construct  a factorization $\phi= \phi_+\phi_-$ as in \eqref{eq:WHfact} for the  product

\begin{equation} \label{eq:prodphiminABa}
	\prod_{m=1}^N \phi_m= A_1^{(1)}(z)B_1^{(1)} (z) A_2^{(1)}(z) B_2^{(1)}(z) \cdots A_k^{(1)}(z) B_k^{(1)}(z),
\end{equation}
where $A_j^{(1)}$ are regular inside the disk, with possibly a singularity at zero (for the matrix and the inverse) and the $B_j^{(0)}$ outside, with possibly a singularity at infinity. Each $A_j^{(1)}$ and $B_j^{(1)}$ is the product of $\phi_m$'s of the form \eqref{eq:matrixjump1}--\eqref{eq:matrixjump4}. We can iteratively switch the matrices  in $k$ steps, where in the $\ell$-th step we simultaneously switch $k-\ell$ pairs according to the rule
	$$  B^{(\ell)}_{j}(z) A^{(\ell)}_{j+\ell}(z)= A^{(\ell+1)}_{j+\ell}(z)B^{(\ell+1)}_{j}(z),\quad  j=\ell,\ldots, k-\ell-1,,$$
	with  the matrices $A^{(\ell+1)}_{j+\ell}(z)$, $B^{(\ell+1)}_{j}(z)$ chosen according to the rules from Section \ref{sec:switching}.	After the last step we obtain 
	\begin{equation} \label{eq:prodphiminABb}
	\prod_{m=1}^N \phi_m=A_1^{(1)}A_2^{(2)}A_3^{(3)}\cdots A_k^{(k)} B_1^{(k)}\cdots B_{k-2}^{(3)}B_{k-1}^{(2)}B_k^{(1)},
\end{equation}
	which is almost the desired factorization  $\phi= \phi_+ \phi_-$. We still need to take care of the possible singularities at $z=0$ and have the right behavior at $z=\infty$. We are now at the same point as \eqref{eq:phiinvarphi} in the proof of Theorem \ref{thm:p-periodic_admissible}. For the final steps, we need the number $\ell_1$ of Bernoulli steps down among the $A_j^{(k)}$'s and the number $\ell_2$ of Bernoulli steps up among the $B_j^{(k)}$. Then 
		\begin{equation} \label{eq:prodphiminABbfinal}
\phi_+=A_1^{(1)}A_2^{(2)}A_3^{(3)}\cdots A_k^{(k)} S^{-\ell_1} C\quad 
\phi_-= C^{-1} S^{\ell_2} B_1^{(k)}\cdots B_{k-2}^{(3)}B_{k-1}^{(2)}B_k^{(1)},
	\end{equation}
	where $C$ is constant matrix to ensure that the leading coefficient of the expansion of  $\phi_-$ as $z\to \infty $ equals $I_p$.
	
Note that the procedure simplifies in case we have \eqref{eq:phidoubleper}. In that case, we can take
$$A_j^{(1)}= \Phi_+, \text{ and } B_j^{(1)}= \Phi_-,$$
where $\Phi=\Phi_+ \Phi_-$ and $\Phi_+$ and $\Phi_-$ are regular inside and outside the disk respectively.   Indeed, in this case the $A_j^{(\ell)}$  are the same for $j=\ell, \ldots, k$. Similarly,  $B_j^{(\ell)}$  are the same for $j=1, \ldots, k-\ell+1$. Thus less administration has to be taken care of in this case. We will write $A_j^{(\ell)}= \Phi_+^{(\ell)}$ and   $B^{(\ell)}_j= \Phi_-^{(\ell)}$  to indicate that there is no dependence on $j$. Then \eqref{eq:prodphiminABb} becomes 
$$\prod_{m=1}^N \phi_m=\Phi_+^{(1)}\Phi_+^{(2)}\Phi_+^{(3)}\cdots \Phi_+^{(k)} \Phi_-^{(k)}\cdots \Phi_-^{(3)}\Phi_-^{(2)}\Phi_-^{(1)}.$$
The examples that we will discuss in the rest of this paper are all of this type, but they even have an important additional simplification. In all of the examples it will be true that after a few iterations we return to the initial situation. That is, 
\begin{equation}\label{eq:finite_steps}
\Phi_+^{(\ell+q)}= \Phi_+^{(\ell)},
\end{equation}
for some $q$ and all $\ell$. This means  that \eqref{eq:prodphiminABb}  can be written as 
\begin{equation}\label{eq:closed_formula}
\prod_{m=1}^N \phi_m=\left(\Phi_+^{(1)}\Phi_+^{(2)}\cdots\Phi_+^{(q)}\right)^{N/q}\left( \Phi_-^{(q)}\cdots\Phi_-^{(2)}\Phi_-^{(1)}\right)^{N/q},
\end{equation}
where we, for simplicity, have assume that  $N\equiv 0 \mod q$.  By including the factors $C$ and $S^{\ell_j}$ as in \eqref{eq:prodphiminABbfinal} we find the desired factorization $\phi=\phi_+\phi_-$.

This settles the factorization of $\phi=\phi_+ \phi_-$, and, naturally, the other factorization $\phi= \tilde \phi_- \tilde \phi_+$ follows a similar discussion.

\section{Example: Domino tiling of the Aztec diamond}\label{section:example_aztec_diamond}

Domino tilings of the Aztec diamond is a well-studied topic introduced in \cite{EKLP}. In \cite{CJY,CEP,JPS95,J02,J05} local and global properties in the large $ N $ limit are analyzed, both for the uniform weight and a weighting that favors either the vertical or horizontal dominoes over the other (see the setup in Section \ref{sec:aztec_diamond:p=1}). Also asymptotic results for models with periodic weighting have been studied. In \cite{KO,KOS} global properties have been discussed in a general context, including  periodic weightings for the Aztec diamond. See also \cite{FR} for results for a certain family of periodic weightings of the Aztec diamond.  To the best of our knowledge, only in case of the two-periodic weighting the fine asymptotic properties were studied. The first results were based on computation of the inverse Kasteleyn matrix, \cite{BCJ,CJ,CY}.   Recently , \cite{DK} used a connection to matrix-valued orthogonal polynomials. Here we will give an alternative derivation for the double integral formula in \cite{DK} using the machinery of the present paper. We will also include an example with higher periodicity. 


We start by recalling the connection between the Aztec diamond and non-intersecting paths, which has been discussed and used  many times before.  We will therefore be brief in our explanation, and refer to \cite{J02}, and also \cite{DK,J05}. Note, however, there are minor differences in the construction of the paths, which will be of help when  we take the number of paths to infinity. 

The $ N\times N $ Aztec diamond is a certain region consisting of $ 2N(N+1) $ squares, which we usually color in a white/black chess board way, see Figure \ref{fig:section_aztec_diamond:boundary}. A domino tiling of the Aztec diamond is a configuration of tiles, $ 2\times 1 $ or $ 1 \times 2  $ rectangles, which covers the Aztec diamond such that no dominoes  overlap, see Figure \ref{fig:section_aztec_diamond:boundary}. The tiles are divided into four types, North, West, South and East. To each tile in a tiling we associate a weight which depends on the type of tile and the position of the tile. To each tiling $ \mathcal T $ of the Aztec diamond we then associate a weight $ w(\mathcal T) $ which is the product of the weights associated to the tiles in $ \mathcal T $. A natural probability measure on the space of all possible tilings $ \mathcal T $ of the Aztec diamond is given by
\begin{equation}\label{eq:section_aztec_diamond:measure}
\mathbb P[\mathcal T] = \frac{w(\mathcal T)}{\sum_{\tilde {\mathcal T}}w(\tilde{\mathcal T})},
\end{equation}
where the sum is over all possible tilings.

\begin{figure}[t]
	\begin{center}
		\begin{tikzpicture}[scale=0.5]
		
		\foreach \x/\y in {-4/-1,-3/-2,-2/-3,-1/-4,
			-3/0,-2/-1,-1/-2,0/-3,
			-2/1,-1/0,0/-1,1/-2,
			-1/2,0/1,1/0,2/-1,
			0/3,1/2,2/1,3/0}
		{\fill[outer color=lightgray,inner color=gray]
			(\x,\y) rectangle (\x+1,\y+1); 
		}

		\foreach \x/\y in {-4/-1,-3/-2,-2/-3,-1/-4}
		{\draw [line width = 1mm]
			(\x+1,\y)--(\x,\y)--(\x,\y+1); 
		}
		
		\foreach \x/\y in {-4/1,-3/2,-2/3,-1/4}
		{\draw [line width = 1mm]
			(\x,\y-1)--(\x,\y)--(\x+1,\y); 
		}
		
		\foreach \x/\y in {4/1,3/2,2/3,1/4}
		{\draw [line width = 1mm]
			(\x-1,\y)--(\x,\y)--(\x,\y-1); 
		}
		
		\foreach \x/\y in {4/-1,3/-2,2/-3,1/-4}
		{\draw [line width = 1mm]
			(\x-1,\y)--(\x,\y)--(\x,\y+1); 
		}
		
		\draw[outer color=lightgray,inner color=gray]
		(5,0.5)--(5,-0.5)--(6,-0.5)--(6,0.5);
		\draw [line width = 1mm] (5,-0.5) rectangle (6,1.5);
		\draw (5.5,-1) node {West};
		
		\draw[outer color=lightgray,inner color=gray]
		(6.5,3)--(7.5,3)--(7.5,4)--(6.5,4);
		\draw[line width = 1mm] (5.5,3) rectangle (7.5,4);
		\draw (6.5,2.5) node {North};
		
		\draw[outer color=lightgray,inner color=gray]
		(6.5,-3)--(5.5,-3)--(5.5,-2)--(6.5,-2);
		\draw[line width = 1mm] (5.5,-3) rectangle (7.5,-2);
		\draw (6.5,-3.5) node {South};
		
		\draw[outer color=lightgray,inner color=gray]
		(7.5,0.5)--(7.5,1.5)--(8.5,1.5)--(8.5,0.5);
		\draw[line width = 1mm] (7.5,-0.5) rectangle (8.5,1.5);
		\draw (8,-1) node {East};
		\end{tikzpicture}
		\begin{tikzpicture}[scale=0.5]
		
		\foreach \x/\y in {-4/0,-3/1,-2/2,-3/-1,-1/-1,0/-2}
		{ \fill[outer color=lightgray,inner color=gray]
			(\x,\y) rectangle (\x+1,\y+1); 
			\draw [line width = 1mm] (\x,\y) rectangle (\x+1,\y+2);
		}
		
		\foreach \x/\y in {-2/-1,1/-2,2/-1,3/0,2/1,1/2}
		{  \fill[outer color=lightgray,inner color=gray]
			(\x,\y+1) rectangle (\x+1,\y+2);
			\draw [line width = 1mm] (\x,\y) rectangle (\x+1,\y+2);
		}
		
		\foreach \x/\y in {-1/-3,-2/-2,-1/3,0/0}
		{  \fill[outer color=lightgray,inner color=gray]
			(\x,\y) rectangle (\x+1,\y+1);
			\draw [line width = 1mm] (\x,\y) rectangle (\x+2,\y+1);
		}
		
		\foreach \x/\y in {-2/1,0/1,-1/2,-1/4}
		{  \fill[outer color=lightgray,inner color=gray]
			(\x+1,\y) rectangle (\x+2,\y+1);
			\draw [line width=1mm] (\x,\y) rectangle (\x+2,\y+1);
		}
		\end{tikzpicture}
		\caption{The boundary of the $ 4 \times 4 $ Aztec diamond together with the four different tiles and an example of a tiling of the Aztec diamond.
			\label{fig:section_aztec_diamond:boundary} } 
	\end{center}
\end{figure}
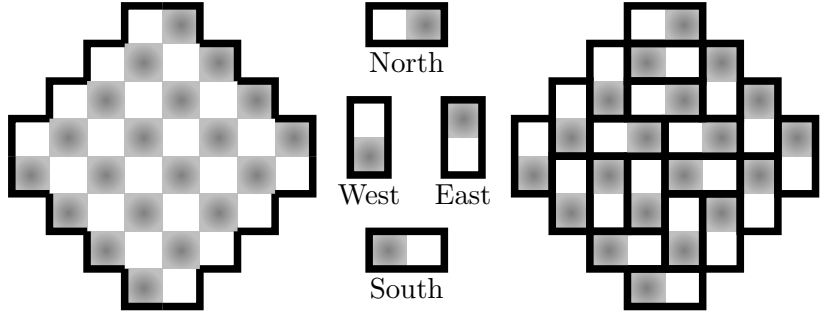

To specify the weights on the tiles we introduce a coordinate system such that the colored squares cover exactly the points $ (m-x+N-1,m+x) $ for $ m=0,1,\cdots,N $ and $ x=0,1,\cdots,N-1 $. For a tile with the black part on $ (m-x+N-1,m+x) $ we set the weight to $ b_{m,x}\in(0,\infty) $ if it is a West tile, to $ c_{m,x}\in (0,\infty) $ if it is a South tile, to $ d_{m,x}\in(0,\infty) $ if it is an East tile and, without loss of generality, to one if it is a North tile. To connect this to non-intersecting paths which fits into the framework of Section \ref{sec:finite}, we assume periodicity in $x$, that is there is an integer $ p $ such that $ b_{m,x+p}=b_{m,x} $, $ c_{m,x+p}=c_{m,x} $ and $ d_{m,x+p}=d_{m,x} $ for all $ m $ and $ x $.

\begin{figure}
	\begin{center}
		\begin{tikzpicture}[scale=0.5]
		\tikzset{->-/.style={decoration={
					markings,
					mark=at position .5 with {\arrow{stealth}}},postaction={decorate}}}
		\foreach \y in {-4,-3,-2,-1,0,1,2,3,4}
		{
			\foreach \x in {0,1,2,3,4,5,6,7,8,9}
			{\draw[->,>=stealth] (\x-.5,\y)--(\x+.5,\y);
				\draw (9.5,\y)--(10.5,\y);
		}}
		\foreach \x in {0,2,4,6,8,10}
		{\foreach \y in {-4,-3,-2,-1,0,1,2,3}
			{\draw[-<,>=stealth] (\x,\y-.5)--(\x,\y+.5);
				\draw (\x,3.5)--(\x,4.5);}}
		\foreach \y in {-3,-2,-1,0,1,2,3,4}
		{\foreach \x in {0,2,4,6,8}
			\draw [->-](\x+1,\y)to(\x+2,\y-1);}
		\draw (1,-5) node {$m=0$};
		\draw (9,-5) node {$m=2N$};
		\draw (-1,0) node {0};
		\draw (-1.5,3) node {$n-1$};
		\foreach \y in {0,1,2,3}
		{\draw (1,\y) node[circle,fill,inner sep=2pt]{};
			\draw (9,\y-4) node[circle,fill,inner sep=2pt]{};}
		\end{tikzpicture}
		\hspace{.5cm}
		\begin{tikzpicture}[scale=0.3]
		\foreach \x in {-1,1,3,5,7,9}
		{\draw (\x,-5.5)--(\x,11.5);}
		\foreach \y in {-5,-4,...,11}
		{\draw (-1.5,\y)--(9.5,\y);}
		
		{\draw (-1.5,0) node[left] {0};
		\draw (-1.5,10) node[left] {$n-1$};
		\draw (0,-5.5) node[below] {$m=0$};
		\draw (8,-5.5) node[below] {$m=2N$};}
		\foreach \x in {0,2,4,6,8}
		{\foreach \y in {-5,-4,...,10}
			{\draw (\x,\y+1)--(\x+1,\y);}}
		
		\foreach \y in {0,1,...,10}
		{\draw (0,\y) node[circle,fill,inner sep=2pt]{};
			\draw (8,\y-4) node[circle,fill,inner sep=2pt]{};}
		
		\foreach \x/\y in {0/7,1/6,0/8,1/8,3/6,0/9,1/9,3/8,4/8,5/6,0/10,1/10,2/10,3/10,4/10,5/10,6/10,7/6}
		{\draw[line width = .7mm] (\x,\y)--(\x+1,\y);
			\draw (\x+1,\y) node[circle,fill,inner sep=1.6pt]{};}
		\foreach \x/\y in {1/7,3/7,5/8,5/7,7/10,7/9,7/8,7/7}
		{\draw[line width = .7mm] (\x,\y)--(\x,\y-1);}
		\foreach \x/\y in {2/8,2/9}
		{\draw[line width=.7mm] (\x,\y)--(\x+1,\y-1);
			\draw (\x+1,\y-1) node[circle,fill,inner sep=1.6pt]{};}
		\foreach \x in {0,2,4,6}
		{\foreach \y in {1,2,...,6}
			{\draw[line width = .7mm] (\x,\y)--(\x+1,\y-1)--(\x+2,\y-1);
				\draw (\x+1,\y-1) node[circle,fill,inner sep=1.6pt]{};
				\draw (\x+2,\y-1) node[circle,fill,inner sep=1.6pt]{};}}
		\foreach \x/\y in {7/-1,7/-2,6/-2,5/-2,7/-3,6/-3,5/-3,3/-2,7/-4,6/-4,5/-4,3/-3,1/-2}
		{\draw[line width = .7mm] (\x,\y)--(\x+1,\y);
			\draw (\x+1,\y) node[circle,fill,inner sep=1.6pt]{};}
		\foreach \x/\y in {5/-1,3/-1,1/-1}
		{\draw[line width = .7mm] (\x,\y)--(\x,\y-1);}
		\foreach \x/\y in {0/0,2/0,4/0,6/0,4/-2,4/-3,2/-2}
		{\draw[line width=.7mm] (\x,\y)--(\x+1,\y-1);
			\draw (\x+1,\y-1) node[circle,fill,inner sep=1.6pt]{};}
		\end{tikzpicture}
		\caption{The underlying graph and a configuration of non-intersecting paths with $ n=11 $ and $ 2N=8 $. The top part of the right picture corresponds to a $ 4\times 4 $ Aztec diamond.
			\label{fig:section_aztec_diamond:graph} }
	\end{center}
\end{figure}
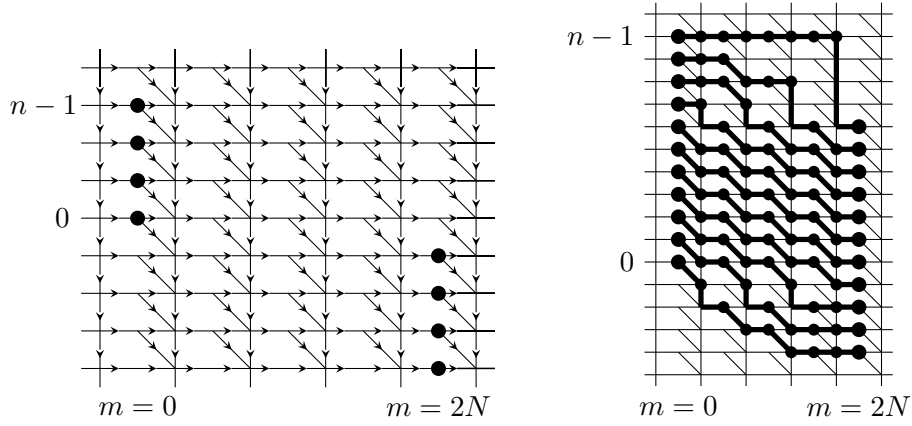

Now, let us leave the subject of domino tilings of the Aztec diamond for a moment and consider instead a directed graph, as in Figure \ref{fig:section_aztec_diamond:graph}. We distribute weights on each of the edges. Then  consider the set $ \Omega $ of all paths starting at $ (0,0), (0,1),\cdots, (0,n-1) $ and ending at $ (2N,-N),(2N,-N+1),\cdots,(2N,-N+n-1) $ for some $ n \geq N $. We assign a weight to each collection of paths $\omega \in \Omega$  by taking the product of the weights of the edges that lie on one of the paths. This defines a probability measure on $\Omega$ by taking the probability of having $\omega$ proportional to its weight. We are now interested in the restriction of this probability measure to the set $\Omega_{n.i.}$ consisting of collections of paths that do not intersect.   The Lindstr\"om-Gessel-Viennot Theorem \cite{GV,L} gives us, as discussed in Section \ref{sec:finite}, a probability measure on the space $\Omega_{n.i}$ of the type \eqref{eq:productofdeterminants}. If we choose the edges weights in a specific way, this probability measure  can be related to the measure \eqref{eq:section_aztec_diamond:measure} on the Aztec diamond.

\begin{figure}
	\begin{center}
		\begin{tikzpicture}[rotate=-45, scale=0.6]
		
		\foreach \x/\y in {-4/0,-3/1,-2/2,-3/-1,-1/-1,0/-2}
		{ \fill[outer color=lightgray,inner color=gray]
			(\x,\y) rectangle (\x+1,\y+1); 
			\draw [line width = 1mm] (\x,\y) rectangle (\x+1,\y+2);
			\draw[very thick,red] (\x,\y+.5)--(\x+1,\y+1.5);
		}
		
		\foreach \x/\y in {-2/-1,1/-2,2/-1,3/0,2/1,1/2}
		{  \fill[outer color=lightgray,inner color=gray]
			(\x,\y+1) rectangle (\x+1,\y+2);
			\draw [line width = 1mm] (\x,\y) rectangle (\x+1,\y+2);
			\draw[very thick,red] (\x,\y+1.5)--(\x+1,\y+.5);
		}
		
		\foreach \x/\y in {-1/-3,-2/-2,-1/3,0/0}
		{  \fill[outer color=lightgray,inner color=gray]
			(\x,\y) rectangle (\x+1,\y+1);
			\draw [line width = 1mm] (\x,\y) rectangle (\x+2,\y+1);
			\draw[very thick,red] (\x,\y+.5)--(\x+2,\y+.5);
		}
		
		\foreach \x/\y in {-2/1,0/1,-1/2,-1/4}
		{  \fill[outer color=lightgray,inner color=gray]
			(\x+1,\y) rectangle (\x+2,\y+1);
			\draw [line width=1mm] (\x,\y) rectangle (\x+2,\y+1);
		}
		\end{tikzpicture}
		\hspace{.5cm}
		\begin{tikzpicture}[scale=0.45]
		\foreach \x in {-1,0,...,9}
		{\draw (\x,-3.5) node[below] {$\x$};}
		\foreach \x in {-1,1,3,5,7,9}
		{\draw (\x,-3.5)--(\x,5.5);}
		\foreach \y in {-4,-3,-2,-1,0,1,2,3,4}
		{\draw (-1.5,\y+1)--(9.5,\y+1);}
		\foreach \x in {0,2,4,6,8}
		{\foreach \y in {-3,-2,-1,0,1,2,3,4}
			{\draw (\x,\y+1)--(\x+1,\y);}}
		
		\foreach \x/\y in {0/4,2/4,4/4,0/3,2/2,2/1}
		{
			\draw[line width = 1mm,red] (\x,\y)--(\x+1,\y);
		}
		
		\foreach \x/\y in {7/3,7/2,7/1,1/3,5/1,3/1}
		{
			\draw[line width = 1mm,red] (\x,\y)--(\x,\y-1);
		}
		
		\foreach \x/\y in {6/4,4/2,0/2,0/1}
		{
			\draw[line width = 1mm,red] (\x,\y)--(\x+1,\y-1);
		}
		
		\foreach \x/\y in {1/4,3/4,5/4,1/2,3/2,1/1}
		{
			\draw[line width = 1mm,red] (\x,\y)--(\x+1,\y);
		}
		
		\foreach \y in {-3,-2,-1,0,1,2,3,4}
		{\draw (0,\y) node[circle,fill,inner sep=2pt]{};}
		\foreach \y in {-3,-2,-1,0}
		{\draw (8,\y) node[circle,fill,inner sep=2pt]{};}
		\foreach \x in {1,3,5,7}
		{\draw (\x,0) node[circle,fill=red,inner sep=2pt]{};}
		\foreach \y in {-2,-1,0}
		{\foreach \x in {1,3,5,7}
			{\draw[line width = .7mm] (\x-1,\y)--(\x,\y-1);
				\draw[line width = .7mm] (\x,\y)--(\x+1,\y);
				\draw[line width = .7mm] (\x,-3)--(\x+1,-3);}}
		\end{tikzpicture}
		\caption{The left picture is the tiling of the Aztec diamond in Figure \ref{fig:section_aztec_diamond:boundary} rotated clockwise by $ \frac{\pi}{4} $ together with the corresponding paths. The right picture is an element in $ \Omega_{n.i.} $ corresponding to the same tiling of the Aztec diamond. It turns out that the red part is independent from the black part. \label{fig:section_aztec_diamond:tiling_paths}  }
	\end{center}
\end{figure}
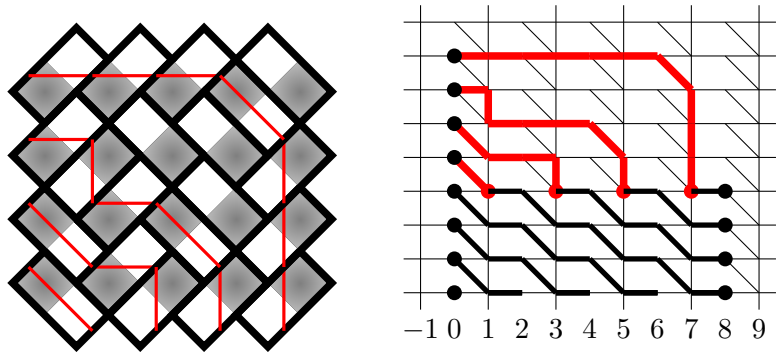

Namely, let $ \mathcal T $ be a tiling of the Aztec diamond. Draw lines on the West, South and East tiles, according to
\begin{center}
	\tikz[scale=.6]{ \fill[outer color=lightgray,inner color=gray](0,0) rectangle (1,1);
		\draw [line width = 1mm] (0,0) rectangle (1,2);
		\draw[very thick,red] (0,.5)--(1,1.5);} \quad
	\tikz[scale=.6]{\fill[outer color=lightgray,inner color=gray](0,0) rectangle (1,1);
		\draw [line width = 1mm] (0,0) rectangle (2,1);
		\draw[very thick,red] (0,.5)--(2,.5);}
	\quad and \quad 
	\tikz[scale=.6]{\fill[outer color=lightgray,inner color=gray](0,1) rectangle (1,2);
		\draw [line width = 1mm] (0,0) rectangle (1,2);
		\draw[very thick,red] (0,1.5)--(1,0.5);},
\end{center}
and rotate the Aztec diamond clockwise by $ \frac{\pi}{4} $ (Figure \ref{fig:section_aztec_diamond:tiling_paths}). We obtain a family of non-intersecting paths. These non-intersecting paths correspond to the top part of an element in $ \Omega_{n.i.} $, the red part in Figure \ref{fig:section_aztec_diamond:tiling_paths}. The only difference, as can be seen in Figure \ref{fig:section_aztec_diamond:tiling_paths}, is the horizontal part of each path going from an odd step to the consecutive even step. We  add these steps artificially (giving them weight one). 

To obtain all paths in an element in $ \Omega_{n.i.} $, we consider tilings on a bigger domain.  Consider a boundary as in Figure \ref{fig:extended_boundary_aztec_diamond}, the boundary of an $ N\times N $ Aztec diamond and an $ (N-1)\times (N-1) $ Aztec diamond connected by a long diagonal ``corridor''. Any tiling of this domain is actually decoupled into three parts. In fact, the ``corridor''-part can only be tiled by Souths dominoes, which implies that a tiling of this bigger domain contains a tiling of an $N\times N $ Aztec diamond (and a tiling of an $(N-1)\times (N-1) $ Aztec diamond). So a probability measure on the space of tilings on this bigger domain can be viewed as a probability measure on the $ N\times N $ Aztec diamond. By adding paths on the tilings, as we did before, rotate the domain clockwise by $ \frac{\pi}{4} $ and add the artificial lines, we obtain an element in $\Omega_{n.i.} $ (Figure \ref{fig:extended_boundary_aztec_diamond}).

If we do not include the ``corridor'' this is similar to the construction done in \cite{DK}. However, it is essential that we do include the ``corridor'', since the number of paths  need to grow to infinity for our results to apply.

\begin{center}
	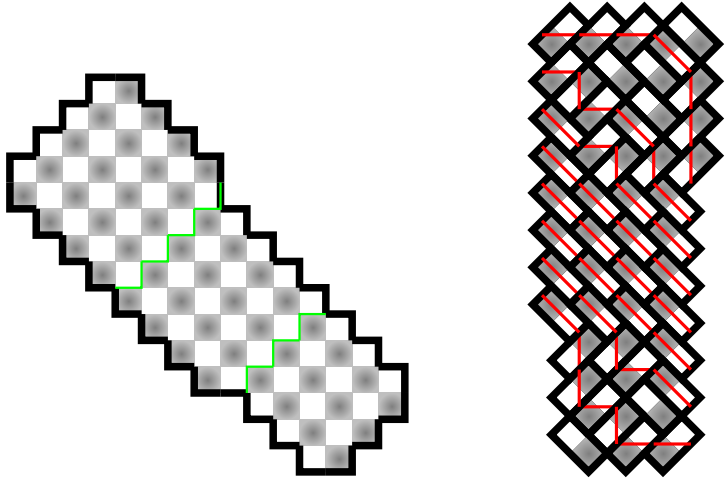
\begin{figure}[t]
		\centering
		\begin{subfigure}[t]{0.45\textwidth}
			\begin{tikzpicture}[scale=.35]
			\foreach \x/\y in {-4/-1,-3/-2,-2/-3,-1/-4,
				-3/0,-2/-1,-1/-2,0/-3,
				-2/1,-1/0,0/-1,1/-2,
				-1/2,0/1,1/0,2/-1,
				0/3,1/2,2/1,3/0,
				0/-5,1/-4,2/-3,3/-2,
				1/-6,2/-5,3/-4,4/-3,
				2/-7,3/-6,4/-5,5/-4,
				3/-8,4/-7,5/-6,6/-5,
				5/-8,6/-7,7/-6,
				6/-9,7/-8,8/-7,
				7/-10,8/-9,9/-8,
				8/-11,9/-10,10/-9}
			{\fill[outer color=lightgray,inner color=gray]
				(\x,\y) rectangle (\x+1,\y+1); 
			}

			\foreach \x/\y in {-4/-1,-3/-2,-2/-3,-1/-4,0/-5,1/-6,2/-7,3/-8,5/-9,6/-10,7/-11}
			{\draw [line width = 1mm]
				(\x+1,\y)--(\x,\y)--(\x,\y+1); 
			}
			
			\foreach \x/\y in {-4/1,-3/2,-2/3,-1/4}
			{\draw [line width = 1mm]
				(\x,\y-1)--(\x,\y)--(\x+1,\y); 
			}
			
			\foreach \x/\y in {4/1,3/2,2/3,1/4,5/-1,6/-2,7/-3,8/-4,9/-5,10/-6,11/-7}
			{\draw [line width = 1mm]
				(\x-1,\y)--(\x,\y)--(\x,\y-1); 
			}
			
			\foreach \x/\y in {9/-11,10/-10,11/-9}
			{\draw [line width = 1mm]
				(\x-1,\y)--(\x,\y)--(\x,\y+1); 
			}
			
			\draw [line width=1mm] (4,-8)--(5,-8);
			\draw [line width=1mm] (4,0)--(4,-1);
			
			\foreach \x/\y in {4/-1,3/-2,2/-3,1/-4}
			{\draw [green,line width = .3mm]
				(\x-1,\y)--(\x,\y)--(\x,\y+1); 
			}
			\foreach \x/\y in {5/-7,6/-6,7/-5}
			{\draw [green,line width = .3mm]
				(\x,\y-1)--(\x,\y)--(\x+1,\y); 
			}
			
			\end{tikzpicture}
		\end{subfigure}
		\hspace{1cm}
		\begin{subfigure}[t]{0.45\textwidth}
			\begin{tikzpicture}[rotate=-45, scale=0.35]
			
			\foreach \x/\y in {-4/0,-3/1,-2/2,-3/-1,-1/-1,0/-2,
				7/-9,9/-9,10/-8,7/-7}
			{ \fill[outer color=lightgray,inner color=gray]
				(\x,\y) rectangle (\x+1,\y+1); 
				\draw [line width = 1mm] (\x,\y) rectangle (\x+1,\y+2);
				\draw[very thick,red] (\x,\y+.5)--(\x+1,\y+1.5);
			}
			
			\foreach \x/\y in {-2/-1,1/-2,2/-1,3/0,2/1,1/2,
				5/-8,6/-7,6/-9,8/-9}
			{  \fill[outer color=lightgray,inner color=gray]
				(\x,\y+1) rectangle (\x+1,\y+2);
				\draw [line width = 1mm] (\x,\y) rectangle (\x+1,\y+2);
				\draw[very thick,red] (\x,\y+1.5)--(\x+1,\y+.5);
			}
			
			\foreach \x/\y in {-1/-3,-2/-2,-1/3,0/0,
				0/-4,1/-5,2/-6,3/-7,1/-3,2/-4,3/-5,4/-6,2/-2,3/-3,4/-4,5/-5,3/-1,4/-2,5/-3,6/-4,
				7/-5,8/-6}
			{  \fill[outer color=lightgray,inner color=gray]
				(\x,\y) rectangle (\x+1,\y+1);
				\draw [line width = 1mm] (\x,\y) rectangle (\x+2,\y+1);
				\draw[very thick,red] (\x,\y+.5)--(\x+2,\y+.5);
			}
			
			\foreach \x/\y in {-2/1,0/1,-1/2,-1/4,
				8/-7,7/-10}
			{  \fill[outer color=lightgray,inner color=gray]
				(\x+1,\y) rectangle (\x+2,\y+1);
				\draw [line width=1mm] (\x,\y) rectangle (\x+2,\y+1);
			}
			\end{tikzpicture}
		\end{subfigure}
		\caption{It turns out that a tiling of the extended domain can never have a tile that crosses the green lines indicated in the picture. To see this it may help to take the view from the non-intersecting paths perspective. \label{fig:extended_boundary_aztec_diamond}}
	\end{figure}
\end{center}

The above discussion leads to the following proposition.
\begin{proposition}\label{prop:section_aztec_diamond:bijection}
	The probability measure on the pace of $ pn $ non-intersecting paths, $\Omega_{n.i.} $, defined by 
	\eqref{eq:productofdeterminants}, \eqref{eq:endpoints} with $ pM=-N$ and with transition matrices $ T_m $, $ m=1,2,\cdots,2N $, where $ T_{2m'+1}(x,x) = b_{m',x-pn+N} $, $ T_{2m'+1}(x,x-1) = c_{m',x-pn+N} $, $ T_{2m'}(x,x) = 1 $, $ T_{2m'}(x,y) = \prod_{k=x}^{y+1}d_{m',k-pn+N} $ if $ y < x $, and zero otherwise, can be viewed as a measure on the $ N\times N $ Aztec diamond. Moreover this measure, when viewed as a measure on the $ N\times N $ Aztec diamond, is the same as the measure \eqref{eq:section_aztec_diamond:measure}.
\end{proposition}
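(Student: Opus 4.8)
The plan is to prove the statement in two movements: first exhibit a weight-preserving bijection between domino tilings (of the extended domain) and the non-intersecting path families in $\Omega_{n.i.}$, and then invoke the Lindström--Gessel--Viennot theorem \cite{GV,L} exactly as in Section \ref{sec:finite} to recognise the induced path measure as an instance of \eqref{eq:productofdeterminants} with the advertised transition matrices and endpoints. Once the bijection is in place, the equality of the two measures reduces to matching partition functions, which is automatic since both \eqref{eq:section_aztec_diamond:measure} and \eqref{eq:productofdeterminants} are normalised.

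First I would make the tile-to-path dictionary of Figure \ref{fig:section_aztec_diamond:tiling_paths} precise. On each West, South and East tile I draw the indicated red segment and leave North tiles unmarked; after the clockwise rotation by $\pi/4$ and the insertion of the artificial weight-one horizontal edges (from each odd level to the consecutive even level), every tiling is sent to a lattice path family on the directed graph of Figure \ref{fig:section_aztec_diamond:graph}, and conversely. I would check this is a bijection level by level and that it is weight-preserving: the product $w(\mathcal T)$ over tiles equals the product of edge weights of the corresponding paths once we declare the odd-step horizontal edge at height $x$ to carry the West weight $b_{m',\cdot}$, the odd-step diagonal edge the South weight $c_{m',\cdot}$, and the even-step down-edges the East weights $d_{m',\cdot}$, with North tiles and artificial edges carrying weight one. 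Reading these edge weights off the graph gives exactly the transition matrices of the statement; in particular the even-step rule $T_{2m'}(x,y)=\prod_{k=x}^{y+1}d_{m',k-pn+N}$ for $y<x$ records that a single downward displacement of a path traverses a string of consecutive East tiles. The $p$-periodicity assumed for $b_{m,x},c_{m,x},d_{m,x}$ in $x$ is precisely what turns each $T_m$ into a $p\times p$ block Toeplitz matrix, and the index shift $x-pn+N$ together with the start and end heights $(0,0),\dots,(0,n-1)$ and $(2N,-N),\dots,(2N,-N+n-1)$ forces \eqref{eq:endpoints} with $pM=-N$.

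Next I would handle the extended domain of Figure \ref{fig:extended_boundary_aztec_diamond}. The key observation, which I would justify from the path picture, is that no tile can cross the green lines, so any tiling of the extended region decouples into a tiling of the $N\times N$ Aztec diamond, a forced tiling of the diagonal corridor by South dominoes only, and a tiling of an $(N-1)\times(N-1)$ Aztec diamond. Because the corridor is frozen and the two diamonds are tiled independently, the weight of an extended tiling factorises, so the pushforward of \eqref{eq:productofdeterminants} under the projection onto the $N\times N$ part is the corresponding product measure marginalised to that part. Since the decoupled corridor and $(N-1)$-diamond factors are common to numerator and denominator, they cancel in the normalisation, and the induced measure on the $N\times N$ Aztec diamond coincides with \eqref{eq:section_aztec_diamond:measure}.

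I expect the combinatorial core---the forcedness of the corridor and the resulting decoupling---to be the main obstacle, since it is where one must argue that the three regions cannot communicate through a shared tile and that the corridor admits a unique tiling; the cleanest route is to translate the claim into the non-intersecting path language, where the frozen region corresponds to paths pinned to a deterministic trajectory (the black part of Figure \ref{fig:section_aztec_diamond:tiling_paths}, which is independent of the red part). The remaining verifications---that the dictionary is a genuine bijection, that it preserves weights edge by edge, and that the bookkeeping of the shifts $x-pn+N$ and $pM=-N$ is consistent---are routine once the figures are read carefully, so I would state them and leave the index chasing to a direct check.
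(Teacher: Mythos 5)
Your proposal is correct and takes essentially the same route as the paper: the paper's own justification of Proposition \ref{prop:section_aztec_diamond:bijection} is exactly the discussion preceding it, namely the tile-to-path dictionary with the artificial weight-one horizontal edges, the observation that the corridor of the extended domain is frozen (tiled by South dominoes only) so that the extended tiling decouples, and the Lindstr\"om--Gessel--Viennot theorem identifying the resulting path measure with \eqref{eq:productofdeterminants} for the stated transition matrices and endpoints. You merely make the weight bookkeeping and the cancellation of the frozen factors in the normalisation slightly more explicit than the paper does.
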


We will be interested in the top part of the paths, since it is the $ N\times N $ Aztec diamond we want to study. That is, in the limit when $ n \to \infty $, taking an infinitely long ``corridor'', the part of the correlation kernel we consider converges to the correlation kernel $ K_{top} $ in Theorem \ref{thm:main}. In fact, with the change of variables \eqref{eq:shiftininparam}, it is the part $ 0< m,m' < 2N $, $ -N/p \leq \xi,\xi' \leq -1 $ which corresponds to the Aztec diamond. For simplicity we will consider Aztec diamonds of size $ pN\times pN $ and then replace $ N $ by $ pN $.

\subsection{Domino tiling of the Aztec diamond $ p=1 $}\label{sec:aztec_diamond:p=1}
The easiest possible choice is to take all weights on the tiles equal to one,  leading to the uniform measure on the Aztec diamond. This does not fit directly into our framework, since the weight has both poles and singularities on the unit circle. Therefore (as was done in \cite{J05}) we introduce a parameter $ a \in (0,1) $ and take the weight on the tiles to $ b_{m,x}=1 $ and $ c_{m,x}^{-1} = d_{m,x} = a $ and later take the limit $ a \to 1 $. That is, consider the probability measure defined by \eqref{eq:productofdeterminants}, \eqref{eq:endpoints} and with transition matrix $ T_m = T_{\phi_m} $, \eqref{eq:Tmblocktoeplitz}, where 
\begin{equation}
\phi_m(z)=
\begin{cases} 1+a^{-1}z^{-1}, &m \text{ odd,}\\
\frac{1}{1-az^{-1}}, & m \text{ even,}
\end{cases}
\end{equation}
and $ M=-N $. This is actually an interesting model by itself, a model which favors vertical dominoes over horizontal dominoes. It is a determinantal point process with correlation kernel given by
\begin{multline}
K(2m,\xi;2m',\xi') = - a^{m'-m}\frac{\chi_{m>m'}}{2\pi \i}\int_{\gamma_{\text{int}}}\left(\frac{az+1}{z-a}\right)^{m-m'}z^{\xi'-\xi}\frac{\d z}{z} \\ 
+ a^{m'-m}\frac{1}{(2\pi\i)^2}\int_{\gamma_{\text{int}}}\int_{\gamma_\text{ext}} \frac{w^{\xi'+N}}{z^{\xi+N+1}} \frac{\left(w-a\right)^{m'-N}}{(a w+1)^{m'}}\frac{(az+1)^m}{\left(z-a\right)^{m-N}}\frac{\d z\d w}{z-w},
\end{multline}
where $ \gamma_\text{int} $ is a circle around zero and one and $ \gamma_\text{ext} $ is a circle around $ \gamma_\text{int} $ with $ -1 $ in its exterior. By analyticity of the kernel we may take $ a\to 1 $. So the kernel for the uniform measure is given by the above formula with $  a=1 $ (see \cite{J05}).

\subsection{Domino tiling of the Aztec diamond $ p=2 $}\label{sec:aztec_diamond:p=2}
The two-periodic Aztec diamond is, to our knowledge, the only non-intersecting path model with block Toeplitz transition matrices which has been studied in full extent. 

The two-periodic Aztec diamond is given in the following way. Let $ \alpha,\beta > 0 $ with $ \alpha\beta = 1 $ and set $ b_{m,x} = 1 $ for all $ m $ and $ x $, $ c_{m,x} = d_{m+1,x} = \alpha^2 $ if $ x $ is odd and $ m $ is even, $ c_{m,x} = d_{m+1,x} = \beta^2 $ if $ x $ is even and $ m $ is even, $ c_{m,x} = d_{m+1,x} = 1 $ otherwise. For simplicity we consider the Aztec diamond of size $ 2N $. That is, consider the probability measure defined by \eqref{eq:productofdeterminants}, \eqref{eq:endpoints} with transition matrix $ T_m = T_{\phi_m} $, \eqref{eq:Tmblocktoeplitz}, where 
\begin{equation}
\phi_{4k+1}(z)  =  
\begin{pmatrix}
1 & \alpha^2 z^{-1}\\
\beta^2 & 1
\end{pmatrix},
\end{equation}
\begin{equation}
\phi_{4k+2}(z) = \frac{1}{1-z^{-1}} 
\begin{pmatrix}
1 & \alpha^2 z^{-1}\\
\beta^2 & 1
\end{pmatrix}, 
\end{equation}
\begin{equation}
\phi_{4k+3}(z) = 
\begin{pmatrix}
1 & z^{-1}\\
1 & 1
\end{pmatrix}, 
\end{equation}
\begin{equation}
\phi_{4k+4}(z) = \frac{1}{1-z^{-1}} 
\begin{pmatrix}
1 & z^{-1}\\
1 & 1
\end{pmatrix}, 
\end{equation}
for $ k = 0,1,\cdots,N-1 $ and with $ M = -N $. Then
\begin{equation}\label{eq:section_aztec_diamond:2weight}
\phi(z) = \frac{1}{(1-z^{-1})^{2N}}\left(
\begin{pmatrix}
1 & \alpha^2 z^{-1}\\
\beta^2 & 1
\end{pmatrix}^2
\begin{pmatrix}
1 & z^{-1}\\
1 & 1
\end{pmatrix}^2
\right)^N.
\end{equation}

\begin{theorem}\label{thm:two_periodic_aztec_diamond}
	Consider the two-periodic Aztec diamond defined above with $ N $ even. This model is a determinantal point process with correlation kernel given by 
	\begin{multline}
	\left[K(4m,2\xi+i;4m',2\xi'+j)\right]_{i,j=0}^1 \\
	= -\frac{\chi_{m>m'}}{2\pi\i}\int_{\gamma_{0,1}} \Phi(z)^{m-m'}z^{\xi'-\xi}\frac{\d z}{z} \\
	+ \frac{1}{(2\pi\i)^2}\int_{\gamma_1}\int_{\gamma_{0,1}} \frac{w^{\xi'}}{z^{\xi+1}}\frac{\rho_1(w)^{\frac{N}{2}-m'}}{z-w}\frac{(1-z^{-1})^{N}}{(1-w^{-1})^{N}} \\
	\times E(w)
	\begin{pmatrix}
	1 & 0 \\
	0 & 0
	\end{pmatrix}
	E(w)^{-1}\Phi(z)^{m-\frac{N}{2}}\d z\d w, \\
	-N \leq \xi,\xi' \leq -1, \quad 0<m,m'<N.
	\end{multline}
	Here 
	\begin{equation}
	\Phi(z) = \frac{1}{(1-z^{-1})^2}
	\begin{pmatrix}
	1 & \alpha^2z^{-1} \\
	\beta^2 & 1
	\end{pmatrix}^2
	\begin{pmatrix}
	1 & z^{-1} \\
	1 & 1
	\end{pmatrix}^2,
	\end{equation}
	$ \rho_1(z) $ and $ \rho_2(z) $ are the eigenvalues of $ \Phi(z) $,
	\begin{equation}
	\Phi(z) = E(z)
	\begin{pmatrix}
	\rho_1(z) & 0 \\
	0 & \rho_2(z)
	\end{pmatrix}
	E(z)^{-1},
	\end{equation}
	and $ \gamma_1 $ is a contour around $ 1 $ and $ \gamma_{0,1} $ is a contour around $ 0 $ and $ \gamma_1 $.
\end{theorem}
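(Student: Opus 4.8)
The plan is to obtain the stated kernel as a specialization of the top kernel $K_{top}$ of Theorem \ref{thm:main}. By Proposition \ref{prop:section_aztec_diamond:bijection} together with the infinite-corridor limit $n\to\infty$, the restriction of the correlation kernel to the Aztec part is exactly $K_{top}$ for the weight \eqref{eq:section_aztec_diamond:2weight}, with $p=2$, $M=-N$, levels $4m$ and positions $2\xi+i$. Writing everything through the one-period symbol $\Phi$, one has $\phi=\Phi^{N}$, and $\phi_{m',m}=\Phi^{m-m'}$, $\phi_{0,m}=\Phi^{m}$, $\phi_{m',N}=\Phi^{N-m'}$. The single-integral term of $K_{top}$ then matches $-\frac{\chi_{m>m'}}{2\pi\i}\int_{\gamma_{0,1}}\Phi(z)^{m-m'}z^{\xi'-\xi}\frac{\d z}{z}$ immediately, once the contour $|z|=1$ is deformed to $\gamma_{0,1}$. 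Hence the entire content reduces to producing the factorization $\phi=\widetilde{\phi}_-\widetilde{\phi}_+$ of \eqref{eq:WHfact} explicitly and to evaluating the matrix $\widetilde{\phi}_+^{-1}(w)\widetilde{\phi}_-^{-1}(z)$ inside the double integral.

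I would build the factorization from the spectral decomposition $\Phi(z)=E(z)\operatorname{diag}(\rho_1(z),\rho_2(z))E(z)^{-1}$. Since $\alpha\beta=1$ forces $\det A=\det B=1-z^{-1}$, one finds $\det\Phi=(1-z^{-1})^2$, so the two eigenvalues are tied by $\rho_2=(1-z^{-1})^2/\rho_1$; feeding this relation into the $\rho_2$-eigenchannel is precisely what will produce the explicit scalar prefactor $(1-z^{-1})^N/(1-w^{-1})^N$ and reduce the remaining $z$-dependence to powers of $\rho_1$. The genuine difficulty is that $E(z)$ is analytic and invertible in \emph{neither} $|z|<1$ nor $|z|>1$, so it cannot simply be distributed across the factorization — this is where the non-commutativity of \eqref{eq:WHfact} bites. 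To repair this I would exploit that the spectral curve $\rho^2-(\Tr\Phi)\rho+(1-z^{-1})^2=0$ is rational (genus zero), uniformize it, and track the analyticity of the eigenvector matrix on the two sheets; equivalently, one can construct $\widetilde{\phi}_\pm$ by the switching rules of Section \ref{sec:switching} following the closed-form recipe \eqref{eq:closed_formula} of Section \ref{sec:method_factorization}, and afterwards recognize the answer in spectral form.

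With the factorization in hand I would substitute into the double-integral term of $K_{top}$, insert $\phi_{m',N}(w)=\Phi(w)^{N-m'}=E(w)\operatorname{diag}(\rho_1^{N-m'},\rho_2^{N-m'})E(w)^{-1}$ and $\phi_{0,m}(z)=\Phi(z)^{m}$, and diagonalize throughout. The crucial simplification is that $\widetilde{\phi}_+^{-1}(w)\widetilde{\phi}_-^{-1}(z)$ together with these spectral powers and the normalization $\widetilde{\phi}_-\sim z^{M}I_p$ (with $M=-N$) decouples the two eigenchannels and, on the relevant contours, keeps only the $\rho_1$-channel at $w$: this is exactly where the rank-one spectral projector $E(w)\begin{pmatrix}1&0\\0&0\end{pmatrix}E(w)^{-1}$ is born, with the leftover $\rho_1(w)^{N/2-m'}$ in front and the full $z$-propagator reassembled into $\Phi(z)^{m-N/2}$ (the half-integer exponents reflecting how the factorization splits the total power $\Phi^{N}$ between inside and outside factors). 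Finally I would pin down the contours: $\gamma_1$ encircling $z=1$ and $\gamma_{0,1}$ encircling $0$ together with $\gamma_1$, forced by the pole at $z=1$ coming from $(1-z^{-1})^{-1}$ and by the zero/pole structure of $\rho_1$, making sure the ordering $|w|<|z|$ of $K_{top}$ is respected so that $1/(z-w)$ is integrated correctly.

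The main obstacle is the explicit construction of $\widetilde{\phi}_\pm$ and, above all, establishing the rank-one collapse onto $E(w)\begin{pmatrix}1&0\\0&0\end{pmatrix}E(w)^{-1}$: this is the one step where the non-commutativity of the factorization is genuinely felt and where the special (rational, genus-zero) spectral structure of the two-periodic $\Phi$ is indispensable. Once that collapse is in place, the remaining work is bookkeeping of the scalar factors, of the $z^{-M}$ normalization, and of the index shifts relating the Aztec size to the number of $\Phi$-blocks.
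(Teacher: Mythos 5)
Your proposal founders at its first step: Theorem \ref{thm:main} cannot be applied to the two-periodic weight \eqref{eq:section_aztec_diamond:2weight} at all. The symbol $\Phi(z)^N$ is \emph{singular on the unit circle}: since $\alpha\beta=1$, the factor $\begin{pmatrix} 1 & \alpha^2z^{-1}\\ \beta^2 & 1\end{pmatrix}$ degenerates to rank one at $z=1$ while the scalar prefactor $(1-z^{-1})^{-2}$ blows up there, and one checks that $\Phi$ genuinely has a pole at $z=1$. Worse, $\det\Phi(z)\equiv 1$, so the winding number of $\det\phi$ is $0$, whereas $pM=-2N$; by the argument-principle direction of Theorem \ref{thm:p-periodic_admissible}, a factorization \eqref{eq:WHfact} with $\widetilde\phi_-\sim z^{-N}I_2$ \emph{does not exist} for this weight. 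So the object you set out to construct --- whether by uniformizing the spectral curve or by the switching rules (which likewise require the geometric-step singularities to lie off the unit circle) --- is simply not there, and your opening claim that ``the restriction of the correlation kernel to the Aztec part is exactly $K_{top}$ for the weight \eqref{eq:section_aztec_diamond:2weight}'' is unjustified.

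The paper's proof is organized precisely around this obstruction. It regularizes the model by a parameter $a\in(0,1)$, chosen so that $\det\phi_a$ has its $2N$-fold zero at $a^{-2}$ and $2N$-fold pole at $a^2$ and hence winding number $-2N$; Theorem \ref{thm:main} and the switching-rule factorization are applied for $a<1$, and only afterwards is $a\to1$ taken \emph{in the kernel formula}, never in the factorization. Two further ingredients, absent from your sketch, make that limit work. First, the switching rule \eqref{eq:2p_switching_rule} acts trivially at $a=1$ and depends continuously on $a$, which gives $\prod_{k,\mathrm{even}}\phi'_{a,k}(z)\to(1-z^{-1})^{-N}\Phi(z)^{N/2}$ as $a\to1$; this is where the exponents $N/2$, the hypothesis that $N$ is even, and the scalar factor $(1-z^{-1})^{N}/(1-w^{-1})^{N}$ actually come from. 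Second, the limit $a\to1$ pinches the $w$-contour between the pole of the integrand at $a^2$ (inside $\gamma_a$) and its singularity at $a^{-2}$ (outside $\gamma_a$); the eigenvalue decomposition is introduced exactly to resolve this: the $\rho_{a,2}$-channel integrates to zero because it is analytic inside $\gamma_a$, while the $\rho_{a,1}$-channel is shown (using the identity relating $\Phi_a^N$ to the product of the even-indexed factors) to be analytic at $a^{-2}$, so the $w$-contour can be enlarged to $\gamma_1$ before letting $a\to1$. Your rank-one projector $E(w)\begin{pmatrix}1 & 0\\ 0 & 0\end{pmatrix}E(w)^{-1}$ is thus real, but it is born from this contour-pinching analysis, not from the normalization $\widetilde\phi_-\sim z^{M}I_p$ ``decoupling the eigenchannels'' as you suggest; without the regularization there is no factorization whose normalization could do any decoupling.
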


\begin{proof}
	As in the uniform case \eqref{eq:section_aztec_diamond:2weight} does not fit into the framework of Section \ref{sec:infinite} directly, since it is singular and has poles on the unit circle. Therefore we introduce an extra parameter $ 0<a<1 $ in the model, which we later take to $ 1 $. Let
	\begin{equation}
	\phi_{a,4k+1}(z) = 
	\begin{pmatrix}
	1 & \alpha^2a^{-1} z^{-1}\\
	\beta^2a^{-1} & 1
	\end{pmatrix},
	\end{equation}
	\begin{equation}
	\phi_{a,4k+2}(z) = \frac{1}{1-a^2z^{-1}}
	\begin{pmatrix}
	1 & \alpha^2a z^{-1}\\
	\beta^2a & 1
	\end{pmatrix},
	\end{equation}
	\begin{equation}
	\phi_{a,4k+3}(z) = 
	\begin{pmatrix}
	1 & a^{-1} z^{-1}\\
	a^{-1} & 1
	\end{pmatrix},
	\end{equation}
	\begin{equation}
	\phi_{a,4k+4}(z) = \frac{1}{1-a^2z^{-1}}
	\begin{pmatrix}
	1 & a z^{-1}\\
	a & 1
	\end{pmatrix},
	\end{equation}
	for $ k = 0,1,\cdots, N-1 $. Consider the model defined by \eqref{eq:productofdeterminants}, \eqref{eq:endpoints} with transition matrices $ T_m=T_{\phi_{a,m}} $ and $ M = -N $. To use Theorem \ref{thm:main} we need a factorization $ \phi_a=\phi_{a,+}\phi_{a,-}=\widetilde{\phi}_{a,-}\widetilde{\phi}_{a,+} $ of the matrix
	\begin{equation}
	\phi_a(z) = \Phi_a(z)^N,
	\end{equation}
	where
	\begin{equation}
	\Phi_a(z) = \phi_{a,1}(z)\phi_{a,2}(z)\phi_{a,3}(z)\phi_{a,4}(z).
	\end{equation}
	The way we introduce the $ a $ parameter in the model is done so that the zeros and poles of $ \det \phi_a $ are away from the unit circle and so that the winding number of $ \det \phi_a $ is $ -2N $. The conditions in Theorem \ref{thm:p-periodic_admissible} are therefore fulfilled. As in the $ p=1 $ case, this is an interesting model on the Aztec diamond by itself. However,  the formula we obtain for general $ a\in (0,1) $ seems to complicated to use for asymptotic analysis.  
	
	By the procedure discussed in Section \ref{sec:method_factorization} we use \eqref{eq:2p_switching_rule} to obtain 
$$
	\phi_a(z) = \prod_{k=1,\text{even}}^{4N}\phi'_{a,k}(z)\prod_{k=1,\text{odd}}^{4N}\phi'_{a,k}(z) 
	= \prod_{k=1,\text{odd}}^{4N}\phi''_{a,k}(z)\prod_{k=1,\text{even}}^{4N}\phi''_{a,k}(z),
$$
	where, for each $ k $, $ \phi_{a,k}' $, $ \phi_{a,k}'' $ and $ \phi_{a,k} $ are of the same type and have the same determinant. So $ \phi'_{a,k} $ and $ \phi''_{a,k} $ does not have poles or singularities inside the unit circle if $ k $ is odd, and $ \phi'_{a,k} $ and $ \phi''_{a,k} $ does not have poles or singularities outside the unit circle if $ k $ is even, except possible at zero and infinity. To obtain the right behavior at zero and infinity we compensate with a factor $ z^N $. More precisely, the factors in the two factorizations of $ \phi_a $ become
	\begin{equation}
	\phi_{a,+}(z) = z^N\prod_{k=1,\text{odd}}^{4N}\phi''_{a,k}(z)C, \quad  \phi_{a,-}(z) = C^{-1}z^{-N}\prod_{k=1,\text{even}}^{4N}\phi''_{a,k}(z),
	\end{equation}
	and
	\begin{equation}
	\widetilde \phi_{a,+}(z) = \widetilde Cz^N\prod_{k=1,\text{odd}}^{4N}\phi'_{a,k}(z), \quad  \widetilde \phi_{a,-}(z) = z^{-N}\prod_{k=1,\text{even}}^{4N}\phi'_{a,k}(z)\widetilde C^{-1},
	\end{equation}
	where $ C $ and $ \widetilde C $ are constants to normalize the leading coefficient at infinity. Theorem \ref{thm:main} then applies. 
	
	Before applying Theorem \ref{thm:main}, we make an important observation that will be used later. The $ \phi'_{a,m} $ are created by applying the rule \eqref{eq:2p_switching_rule} many times, in a particular order (explained in Section \ref{sec:method_factorization}). However, if we use this rule in the case $ a=1$, it does not effect the matrices. For example
	\begin{equation}
	\begin{pmatrix}
	1 & \alpha^2az^{-1} \\
	\beta^2 a & 1
	\end{pmatrix}
	\begin{pmatrix}
	1 & a^{-1}z^{-1}\\
	a^{-1} & 1 
	\end{pmatrix}
	=
	\begin{pmatrix}
	1 & a^{-1}xz^{-1}\\
	a^{-1}x^{-1} & 1 
	\end{pmatrix}
	\begin{pmatrix}
	1 & \beta^2axz^{-1}\\
	\alpha^2ax^{-1} & 1 
	\end{pmatrix},
	\end{equation}
	where $ x = \frac{\alpha^2a+a^{-1}}{\beta^2a+a^{-1}} $ and in the the case $ a=1 $ this equality becomes
	\begin{equation}
	\begin{pmatrix}
	1 & \alpha^2z^{-1} \\
	\beta^2 & 1
	\end{pmatrix}
	\begin{pmatrix}
	1 & z^{-1}\\
	1 & 1 
	\end{pmatrix}
	=
	\begin{pmatrix}
	1 & \alpha^2z^{-1} \\
	\beta^2 & 1
	\end{pmatrix}
	\begin{pmatrix}
	1 & z^{-1}\\
	1 & 1 
	\end{pmatrix}.
	\end{equation}
	So the factors $ \phi'_{1,k} $, $ k=1,\cdots,4N $ are not complicated and 
	\begin{equation}
	\prod_{k=1,\text{even}}^{4N}\phi'_{1,k}(z) = (1-z^{-1})^{-N}\Phi(z)^\frac{N}{2}.
	\end{equation}
	Moreover, this switching rule is continuous with respect to the parameter $ a $. This tells us that even if $ \widetilde \phi_{a,-} $ is complicated for $ a \in (0,1) $, it simplifies significantly when $ a \to 1 $ (the same is true for $ \widetilde \phi_{a,+} $), namely
	\begin{equation}\label{eq:section_aztec_diamond:limit_factor}
	\prod_{k=1,\text{even}}^{4N}\phi'_{a,k}(z) \to \prod_{k=1,\text{even}}^{4N}\phi'_{1,k}(z) = (1-z^{-1})^{-N}\Phi(z)^\frac{N}{2},
	\end{equation}
	as $ a \to 1 $.
	
	Now, by Theorem \ref{thm:main} we obtain, if $ -N \leq \xi' $,
	\begin{multline}\label{eq:section_aztec_diamond:kernel_with_a}
	\left[K_{top}^{(a)}(4m,2\xi+i;4m',2\xi'+j)\right]_{i,j=0}^1 \\
	= -\frac{\chi_{m>m'}}{2\pi\i}\int_{\gamma_{0,1}} \Phi_a(z)^{m-m'}z^{\xi'-\xi}\frac{\d z}{z} \\
	+ \frac{1}{(2\pi\i)^2}\int_{\gamma_a}\int_{\gamma_{0,1,a}} \frac{w^{\xi'}}{z^{\xi+1}}\frac{1}{z-w} \Phi_a(w)^{N-m'} \left(\prod_{k=1,\text{odd}}^{4N}\phi'_{a,k}(w)\right)^{-1} \\
	\times\left(\prod_{k=1,\text{even}}^{4N}\phi'_{a,k}(z)\right)^{-1}\Phi_a(z)^m\d z\d w,
	\end{multline}
	where $ \gamma_a $ is a simple curve with $ a^2 $ in the interior and $ a^{-2} $ in the exterior, and $ \gamma_{0,1,a} $ is a simple curve with $ 0 $, $ 1 $ and $ \gamma_a $ in the interior. We would like to take $ a \to 1 $. The problem however is that the integrand with respect to $ w $ is singular both at $ a^2 $ and $ a^{-2} $ which lie on different sides of $ \gamma_a $, see Figure \ref{fig:section_aztec_diamond:contours}, which complicates the limit procedure. To solve this problem we go to the eigenvalues.
	
	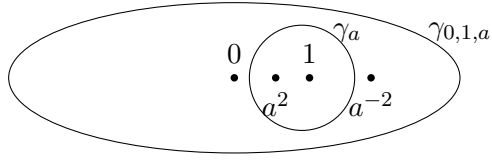
\begin{figure}[t]
		\begin{center}
			\begin{tikzpicture}[scale=1]
			\draw (.9,0) circle (.7);
			\draw (.55,0) node[circle,fill,inner sep=1pt,label=below:$a^2$]{};
			\draw (1,0) node[circle,fill,inner sep=1pt,label=above:$1$]{};
			\draw (1.818,0) node[circle,fill,inner sep=1pt,label=below:$a^{-2}$]{};
			\draw (0,0) node[circle,fill,inner sep=1pt,label=above:$0$]{};
			\draw (0,0) ellipse (3 and 1);
			\draw (1.5,.6) node{$ \gamma_a$};
			\draw (3,.6) node{$ \gamma_{0,1,a}$};
			\end{tikzpicture}
		\end{center}
		\caption{The contours of integration. The point $ a^2 $ is a pole for $ \rho_{a,1} $ and $ a^{-2} $ is a zero for $ \rho_{a,2} $.
			\label{fig:section_aztec_diamond:contours}}
	\end{figure}
	
	We do not need explicit formulas for the eigenvalues, but only the behavior of the eigenvalues near $z=1$. Nevertheless, for the sake of clarity we calculate them explicitly. First, observe that 
	\begin{equation}
	\det \Phi_a(z) = \frac{(1-a^{-2}z^{-1})^2}{(1-a^2z^{-1})^2},
	\end{equation}
	and
	\begin{equation}
	\Tr \Phi_a(z) = \frac{1}{(1-a^2z^{-1})^2}\left(2(1+z^{-1})^2+z^{-1}(a+a^{-1})^2(\alpha^2+\beta^2)\right).
	\end{equation}
The eigenvalues of $ \Phi_a $ become therefore	\begin{multline}
	\rho_{a,1}(z) = \frac{1}{(z-a^2)^2}\left((z+1)^2+\frac{1}{2}z(a+a^{-1})^2(\alpha^2+\beta^2)\right. \\
	\left.+ (a+a^{-1})(\alpha+\beta)\sqrt{z(z^2+xz+1)}\right)
	\end{multline}
	and
	\begin{multline}
	\rho_{a,2}(z) = \frac{1}{(z-a^2)^2} \left((z+1)^2+\frac{1}{2}z(a+a^{-1})^2(\alpha^2+\beta^2) \right. \\
	\left.- (a+a^{-1})(\alpha+\beta)\sqrt{z(z^2+xz+1)}\right)
	\end{multline}
	where
	\begin{equation}
	x = \frac{1}{4}\left((a+a^{-1})^2(\alpha^2+\beta^2)-2(a-a^{-1})^2\right)\geq 2.
	\end{equation}
	That $ x \geq 2 $ tells us that the branch of $ \sqrt{z(z^2+xz+1)} $ can be taken with the cuts 
	\begin{equation}  
	\left(-\infty,-\frac{x}{2}-\sqrt{\frac{x^2}{4}-1}\right] \text{ and } \left[-\frac{x}{2}+\sqrt{\frac{x^2}{4}-1},0\right].
	\end{equation}
	Take $ \sqrt{z(z^2+xz+1)} $ to be positive when $ z>0$ which, together with $ \det \Phi_a = \rho_{a,1}\rho_{a,2} $, implies that $ \rho_{a,1} $ and $ \rho_{a,2} $ are analytic and non-zero for $ z $ away from the cuts, except for $ z = a^2 $ where $ \rho_{a,1} $ has a pole, and for $ z = a^{-2} $ where $ \rho_{a,2} $ is zero. 
	
	Let $ \Phi_a(w) = E_a(w)\Lambda_a(w)E_a(w)^{-1} $ be an eigenvalue decomposition of $ \Phi_a $. Use the eigenvalue decomposition to write $ \Phi_a^{N-m'} $ as the sum
	\begin{multline}
	\Phi_a(w)^{N-m'} = \rho_{a,1}(w)^{N-m'}E_a(w)
	\begin{pmatrix}
	1 & 0 \\
	0 & 0
	\end{pmatrix}
	E_a(w)^{-1} \\
	+ \rho_{a,2}(w)^{N-m'}E_a(w)
	\begin{pmatrix}
	0 & 0 \\
	0 & 1
	\end{pmatrix}
	E_a(w)^{-1},
	\end{multline}
	and insert it in \eqref{eq:section_aztec_diamond:kernel_with_a}. Then the term with $ \rho_{a,2} $ is zero, since the integrand depending on $ w $ is analytic inside $ \gamma_a $. For the other term, recall that $ \phi'_{a,m} $ is analytic outside the unit circle if $ m $ is even but is singular at $ a^{-2} $ if $ m $ is odd. Moreover, 
	\begin{multline}
	\rho_{a,1}(w)^{N-m'}E_a(w)
	\begin{pmatrix}
	1 & 0 \\
	0 & 0
	\end{pmatrix}
	E_a(w)^{-1}
	\left(\prod_{k=1,\text{odd}}^{4N}\phi'_{a,k}(w)\right)^{-1} \\
	= \rho_{a,1}(w)^{-m'}E_a(w)
	\begin{pmatrix}
	1 & 0 \\
	0 & 0
	\end{pmatrix}
	E_a(w)^{-1}
\Phi_a(w)^N	\left(\prod_{k=1,\text{odd}}^{4N}\phi'_{a,k}(w)\right)^{-1} \\
	=\rho_{a,1}(w)^{-m'}E_a(w)
	\begin{pmatrix}
	1 & 0 \\
	0 & 0
	\end{pmatrix}
	E_a(w)^{-1}
	\prod_{k=1,\text{even}}^{4N}\phi'_{a,k}(w),
	\end{multline}
	which is analytic at $w= a^{-2} $. Move the contour $ \gamma_a $ to a contour $ \gamma_1 $ containing $ a^2 $, $1$ and $ a^{-2}$ in its interior. By rewriting \eqref{eq:section_aztec_diamond:kernel_with_a} in this way we may, by analyticity of the kernel, take the limit $ a \to 1 $. This shows that 
	\begin{multline}
	\lim_{a\to1} \left[K_{top}^{(a)}(4m,2\xi+i;4m',2\xi'+j)\right]_{i,j=0}^1 \\
	= -\frac{\chi_{m>m'}}{2\pi\i}\int_{\gamma_{0,1}} \Phi(z)^{m-m'}z^{\xi'-\xi}\frac{\d z}{z} \\
	+ \frac{1}{(2\pi\i)^2}\int_{\gamma_1}\int_{\gamma_{0,1}} \frac{w^{\xi'}}{z^{\xi+1}}\frac{\rho_1(w)^{-m'}}{z-w} E(w)
	\begin{pmatrix}
	1 & 0 \\
	0 & 0
	\end{pmatrix}
	E(w)^{-1}
	\prod_{k=1,\text{even}}^{4N}\phi'_{1,k}(w) \\
	\times\left(\prod_{k=1,\text{even}}^{4N}\phi'_{1,k}(z)\right)^{-1}\Phi(z)^m\d z\d w.
	\end{multline}
 After 	combining this with  \eqref{eq:section_aztec_diamond:limit_factor}, we obtain the statement.
\end{proof}

\subsection{Domino tiling of the Aztec diamond $ p=3 $}\label{sec:aztec_diamond:p=3}

The last example on the Aztec diamond that we discuss is  a model with a higher periodicity. Numerical and preliminary computations suggest that this model has two distinct gas phases, see Figure \ref{fig:section_aztec_diamond:plot3x2}.

\begin{figure}[t]
	\begin{center}
		\includegraphics[scale=.45]{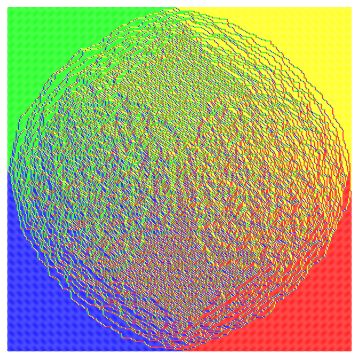}
		\hspace{.5cm}
		\includegraphics[scale=.45]{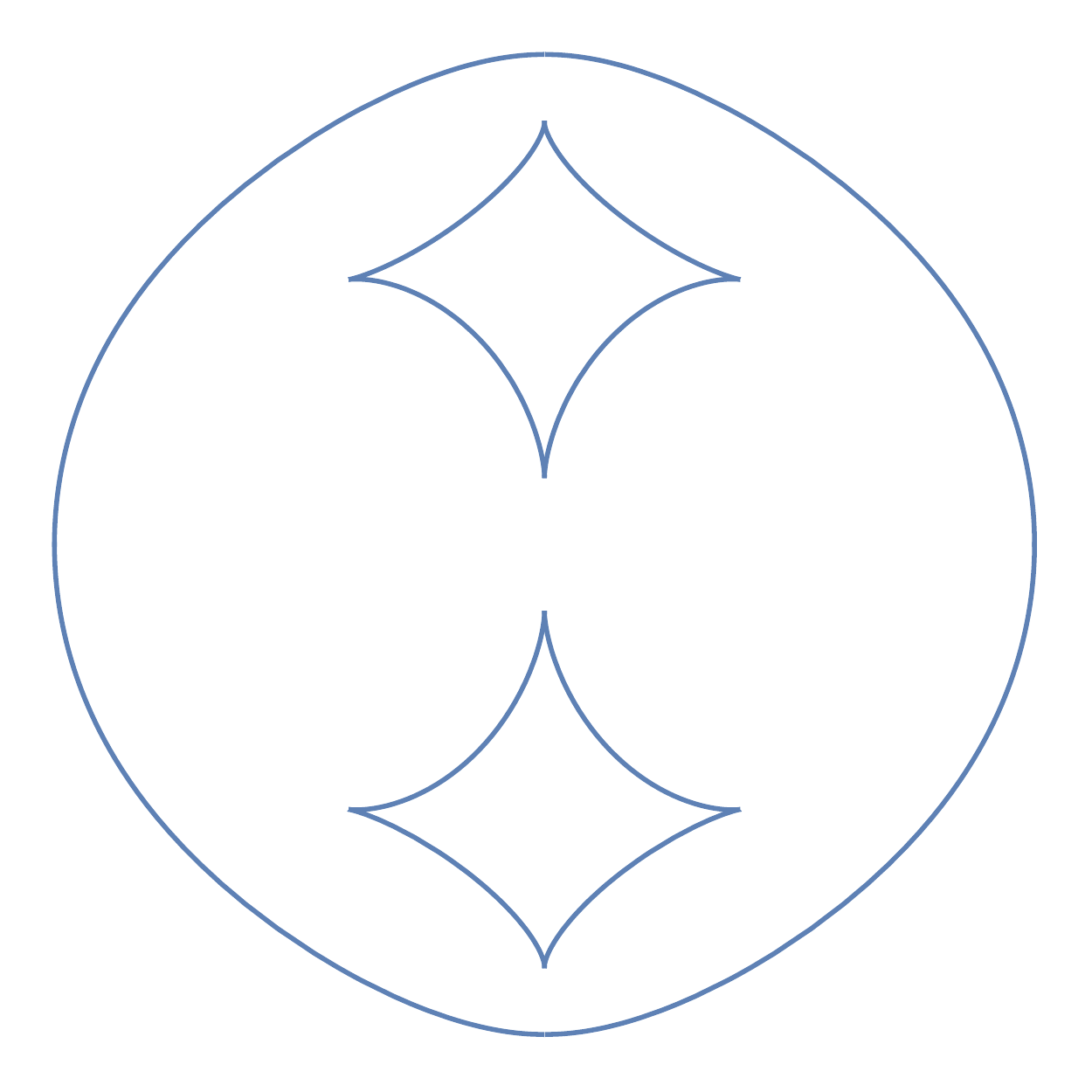}
	\end{center}
	\caption{Numerical and formal computations, here with $ \alpha_0 = 0.2 $, $ \alpha_1 = 0.7 $, $ \alpha_2=\alpha_0^{-1}\alpha_1^{-1} $ and $ \beta_0=\beta_1=\beta_2=1 $, indicate that there are two distinct gas phases. \label{fig:section_aztec_diamond:plot3x2}}
\end{figure}

Let $ \alpha_0,\alpha_1,\alpha_2,\beta_0,\beta_1,\beta_2 $ be positive parameters such that $ \alpha_0\alpha_1\alpha_2 = \beta_0\beta_1\beta_2 = 1 $. Consider the $ 6 N $-sized Aztec diamond and set $ b_{m,x} = 1 $ for all $ m $ and $ x $, $ c_{m,x} = d_{m+1,x} = \alpha_i $ if $ x\equiv i \mod 3 $ and $ m $ is even, $ c_{m,x} = d_{m+1,x} = \beta_i $ if $ x\equiv i \mod 3 $ and $ m $ is odd. That is, consider the probability measure defined by \eqref{eq:productofdeterminants}, \eqref{eq:endpoints} with transition matrices $ T_m = T_{\phi_m} $ with symbols
\begin{equation}
\phi_{4k+1}(z) = M_d(z;\alpha_0,\alpha_1,\alpha_2),
\end{equation}
\begin{equation}
\phi_{4k+2}(z) = N_d(z;\alpha_0,\alpha_1,\alpha_2),
\end{equation}
\begin{equation}
\phi_{4k+3}(z) = M_d(z;\beta_0,\beta_1,\beta_2),
\end{equation}
and
\begin{equation}
\phi_{4k+4}(z) = N_d(z;\beta_0,\beta_1,\beta_2),
\end{equation}
for $ k=0,\cdots,3N-1 $ and $ M=-2N $. Here we use the notation
\begin{equation}
M_d(z;\alpha_0,\alpha_1,\alpha_2) = M(1/z;(\alpha_0,\alpha_1,\alpha_2))^T,
\end{equation}
and
\begin{equation}
N_d(z;\alpha_0,\alpha_1,\alpha_2) = N(1/z;(\alpha_0,\alpha_1,\alpha_2))^T,
\end{equation}
recall \eqref{eq:defMa} and \eqref{eq:defNa}. Then
\begin{equation}\label{eq:section_aztec_diamond:3x2weight}
\phi(z) = \prod_{m=1}^{12N}\phi_m(z) = \Phi(z)^{3N},
\end{equation}
where
\begin{equation}
\Phi(z) = \phi_1(z)\phi_2(z)\phi_3(z)\phi_4(z).
\end{equation}

\begin{theorem}\label{thm:section_aztec_diamond:3x2}
	Consider the $ 3\times 2 $-periodic Aztec diamond defined above. The kernel for the corresponding point process is given by
	\begin{multline}
	\left[ K(12m, 3\xi +j; 12m', 3 \xi'+i) \right]_{i,j=0}^2 \\
	= - \frac{\chi_{m > m'}}{2\pi \i} \oint_{\gamma_\text{int}}
	A(z)^{m-m'}B(z)^{m-m'} z^{\xi'-\xi} \frac{\d z}{z}
	\\
	+  \frac{1}{(2\pi \i)^2}
	\int_{\gamma_\text{int}}\int_{\gamma_\text{ext}} 
	A(w)^{N-m'}B(w)^{-m'} A(z)^{m-N} B(z)^m  \frac{w^{\xi'}}{z^{\xi+1}} \frac{\d z \d w}{z-w}, \\
	\xi,\xi' \in \mathbb{Z}, \quad 0 < m,m'<N,
	\end{multline}
	where
	\begin{multline}
	A(z) =  N_d(z;\alpha_0,\alpha_1\alpha_2) N_d(z;\beta_2c_0,\beta_0c_1,\beta_1c_2) \\
	\times N_d(z;\alpha_2c_0,\alpha_0c_1,\alpha_1c_2)  N_d(z;\beta_1c_2c_0,\beta_2c_0c_1,\beta_0c_1c_0) \\
	\times N_d(z;\alpha_1c_2c_0,\alpha_2c_0c_1,\alpha_0c_1c_0) N_d(z;\beta_0,\beta_1,\beta_2),
	\end{multline}
	\begin{multline}
	B(z) =  M_d(z;\alpha_0,\alpha_1,\alpha_2)  M_d(z;\beta_1c_2c_0,\beta_2c_0c_1,\beta_0c_1c_0) \\
	\times M_d(z;\alpha_1c_2c_0,\alpha_2c_0c_1,\alpha_0c_1c_0) M_d(z;\beta_2c_0,\beta_0c_1,\beta_1c_2) \\
	\times M_d(z;\alpha_2c_0,\alpha_0c_1,\alpha_1c_2) M_d(z;\beta_0,\beta_1,\beta_2),
	\end{multline}
	and
	\begin{equation}
	c_0=\frac{\alpha_0+\beta_0}{\alpha_2+\beta_2}, \quad c_1=\frac{\alpha_1+\beta_1}{\alpha_0+\beta_0}, \quad c_2=\frac{\alpha_2+\beta_2}{\alpha_1+\beta_1},
	\end{equation}
	and $ \gamma_\text{int} $ is a contour around one and zero with $ - 1 $ in the exterior, and $ \gamma_\text{ext} $ is a contour around $ \gamma_\text{int} $, with $ -1 $ in the exterior.
\end{theorem}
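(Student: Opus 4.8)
The plan is to follow the template of the proof of Theorem~\ref{thm:two_periodic_aztec_diamond}: regularize the weight so that it meets the hypotheses of Theorem~\ref{thm:main}, build the factorizations \eqref{eq:WHfact} explicitly with the switching machinery of Section~\ref{sec:method_factorization}, apply Theorem~\ref{thm:main} to obtain a kernel $K_{top}^{(a)}$, and finally remove the regularization. Since $\alpha_0\alpha_1\alpha_2=\beta_0\beta_1\beta_2=1$, each geometric factor $N_d(z;\alpha_0,\alpha_1,\alpha_2)$ and $N_d(z;\beta_0,\beta_1,\beta_2)$ carries a pole at $z=1$, so the weight $\Phi$ in \eqref{eq:section_aztec_diamond:3x2weight} is singular on the unit circle and the analyticity requirement of Theorem~\ref{thm:main} fails. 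First I would therefore introduce a parameter $a\in(0,1)$, replacing $\alpha_i,\beta_i$ by $a$-scaled versions exactly as in the $p=2$ proof, so that all zeros and poles of $\det\phi_a$ move off the unit circle while the winding number of $\det\phi_a$ stays equal to $pM=-6N$. Theorem~\ref{thm:p-periodic_admissible} then guarantees the existence of both factorizations $\phi_a=\phi_{a,+}\phi_{a,-}=\widetilde\phi_{a,-}\widetilde\phi_{a,+}$.

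The core of the argument is the explicit construction of these factorizations. Here every factor of $\Phi_a=\phi_{a,1}\phi_{a,2}\phi_{a,3}\phi_{a,4}$ is a \emph{down} step: $\phi_{a,1},\phi_{a,3}$ are Bernoulli steps down of type $M_d$ and $\phi_{a,2},\phi_{a,4}$ are geometric steps down of type $N_d$. The geometric factors are analytic outside the disk, so the task is to commute all $N_d$ factors past all $M_d$ factors across the $3N$ copies of $\Phi_a$, sending the geometric factors to one side and the Bernoulli factors (after the shift $S$ that fixes their behaviour at $0$ and $\infty$, cf.\ \eqref{eq:prodphiminABbfinal}) to the other. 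I would do this with the transposed, $z\mapsto 1/z$ form of the switching rules of Lemmas~\ref{lem:eta} and~\ref{lem:theta}, as licensed by Proposition~\ref{lem:section_matrix_case:interchange}; the interchange of $M_d$ past $N_d$ is governed by the $\theta$-map, whose ratios $\tfrac{\alpha_i+\beta_i}{\alpha_{i+1}+\beta_{i+1}}$ are precisely the constants $c_0,c_1,c_2$ in the statement. The crucial simplification is the periodicity \eqref{eq:finite_steps}: because of the combined $3\times 2$ spatial period the transformed parameters cycle through a finite set, so by \eqref{eq:closed_formula} the accumulated products close up into the two six-fold products $B(z)$ (all $M_d$ factors) and $A(z)$ (all $N_d$ factors). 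As in the $p=2$ proof, the switching rule is continuous in $a$ and degenerates to the identity at $a=1$, so the factors simplify explicitly in the limit.

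With the factorizations in hand I would insert them into the $K_{top}$ formula of Theorem~\ref{thm:main}, using $\phi_{0,12m}=\Phi^{3m}$ and $\phi_{12m',12N}=\Phi^{3(N-m')}$; the periodic telescoping then collapses the partial products $\widetilde\phi_{a,-}^{-1}(z)\Phi_a(z)^{3m}$ and $\Phi_a(w)^{3(N-m')}\widetilde\phi_{a,+}^{-1}(w)$ into the powers $A(z)^{m-N}B(z)^m$ and $A(w)^{N-m'}B(w)^{-m'}$. The last, and I expect hardest, step is the limit $a\to1$: for $a<1$ the $w$-integrand is singular on both sides of the intermediate contour $\gamma_a$ (a pole and a zero approaching $z=1$ from $z=a^{\mp}$), exactly the pinching met in Theorem~\ref{thm:two_periodic_aztec_diamond}. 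The point to verify is that here the explicit Bernoulli/geometric split already does the separation that the eigenvalue decomposition did for $p=2$: the poles sit in the geometric factor $A$ and the compensating zeros in the Bernoulli factor $B$, so that the combination appearing in the integrand is analytic across the pinch and the contour $\gamma_a$ can be deformed to a fixed $\gamma_1$ enclosing $1$. Granting this, the integrand is analytic in $a$ near $a=1$ and the limit is taken directly, yielding the stated $K$. The main obstacle is thus twofold: carrying out the switching bookkeeping to confirm that the parameters are exactly the $c_i$-twisted tuples listed in $A(z)$ and $B(z)$, and justifying the cancellation that makes the $a\to1$ limit clean without diagonalizing $\Phi_a$.
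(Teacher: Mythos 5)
Your overall strategy coincides with the paper's: regularize with a parameter $a\in(0,1)$ so that Theorem \ref{thm:p-periodic_admissible} applies, construct the factorization explicitly via the switching rule for whirls and curls (the identity $M_d(z;a_0,a_1,a_2)\,N_d(z;b_0,b_1,b_2)=N_d(z;b_2x_0,b_0x_1,b_1x_2)\,M_d(z;a_2x_0,a_0x_1,a_1x_2)$, whose ratios $x_i$ produce exactly the constants $c_i$), telescope to $\Phi(z)^{3N}=A(z)^NB(z)^N$ so that $\widetilde\phi_{1,+}(z)=z^{2N}CB(z)^N$ and $\widetilde\phi_{1,-}(z)=z^{-2N}A(z)^NC^{-1}$, and then apply Theorem \ref{thm:main} and let $a\to1$. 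Up to the last step, this is essentially the paper's proof.

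The gap is in that last step. You assert that the $w$-integrand is pinched at $z=1$ by a pole and a determinant zero approaching from opposite sides, ``exactly the pinching met in Theorem \ref{thm:two_periodic_aztec_diamond}'', and you propose to resolve it by verifying that the poles carried by the curl factor $A$ are cancelled against zeros carried by the whirl factor $B$, making the integrand analytic across the pinch. Both the premise and the proposed verification are wrong for this model. Because $p=3$ is odd, $\det M(w;{\bf a})=1+a_0a_1a_2\,w$, so the regularized whirl factors ($m$ odd) have $\det\phi_{a,m}(z)=1+a^{-3}/z$: their inverses are singular at $z=-a^{-3}$, which tends to $-1$, not to $+1$. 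The curl factors ($m$ even) have their pole at $z=a^3\to+1$. The two families of singularities thus converge to \emph{different} points of the unit circle; since $\gamma_\text{int}$ and $\gamma_\text{ext}$ both enclose $\{0,1\}$ and keep $-1$ in the exterior, nothing is pinched, every singularity stays at a fixed distance from the contours as $a\to1$, and the limit follows directly from continuity of the switching procedure in $a$. This is precisely why the paper remarks that, in contrast to the two-periodic case, it is not necessary to go to the eigenvalues. Moreover, the cancellation you would set out to verify is false: $A_a(w)^{N-m'}B_a(w)^{-m'}$ genuinely blows up at $w=-a^{-3}$, since $\det B_a$ vanishes there while $A_a(w)$ remains invertible there; the formula is well defined only because that point is kept exterior to both contours, not because of any analyticity across it. So, carried out as written, your final step would stall on an unprovable claim; the correct completion is simpler than the one you anticipate.
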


Observe that, since $ A $ and $ B $ are explicit matrices, an asymptotic analysis of the correlation kernel  by a steepest descent analysis should be within reach.

To prove this theorem, we need, as in the uniform Aztec diamond and the two-periodic Aztec diamond, to introduce a parameter $ a \in (0,1) $, which we later take to one. In contrast to the proof of Theorem \ref{thm:two_periodic_aztec_diamond}, it will not be necessary to go to the eigenvalues,.

\begin{proof}
	As in the other two examples \eqref{eq:section_aztec_diamond:3x2weight} does not fit into the framework of Section \ref{sec:infinite}, since it is singular and has poles on the unit circle. As before, we therefore introduce an extra parameter $ 0<a<1 $ in the model and take the limit $ a\to 1 $. Let
	\begin{equation}
	\phi_{a,4k+1}(z) =  M_d(z;\alpha_0a^{-1},\alpha_1a^{-1},\alpha_2a^{-1}),
	\end{equation}
	\begin{equation}
	\phi_{a,4k+2}(z) =  N_d(z;\alpha_0a,\alpha_1a,\alpha_2a),
	\end{equation}
	\begin{equation}
	\phi_{a,4k+3}(z) =  M_d(z;\beta_0a^{-1},\beta_1a^{-1},\beta_2a^{-1}),
	\end{equation}
	and
	\begin{equation}
	\phi_{a,4k+4}(z) =  N_d(z;\beta_0a,\beta_1a,\beta_2a),
	\end{equation}
	for $ k=0,\cdots,3N-1 $. Consider the model defined by \eqref{eq:productofdeterminants} and \eqref{eq:endpoints} with transition matrices $ T_m=T_{\phi_{a,m}} $ and $ M = -2N $. The parameter $ a $ is introduced so that the determinant of
	\begin{equation}
	\phi_a(z) = \prod_{m=1}^{12N}\phi_{m,a}(z),
	\end{equation}
	does not have zeros and poles on the unit circle and has winding number $-6N$. By Theorem \ref{thm:p-periodic_admissible} $ \phi_a $ has a factorization  \eqref{eq:WHfact}. As before it is the factorization $ \phi_a = \widetilde \phi_{a,-}\widetilde \phi_{a,+} $ which gives us the right kernel in Theorem \ref{thm:main}. This factorization can be constructed by switching the factors in $ \phi_a $ pairwise, as explained in Section \ref{sec:method_factorization}. When $ a \in (0,1) $ the factorization seems too complicated to use for an asymptotic analysis. However, as in the two-periodic case, the factorization simplifies when $ a=1 $. When $ a = 1 $ the switching is not as straightforward as it was in the two-periodic case, but at least we are in the situation with \eqref{eq:finite_steps} and hence we obtain an explicit factorization. We prove this before applying Theorem \ref{thm:main}. 
	
	Explicitly, in this situation, Proposition \ref{lem:section_matrix_case:interchange} becomes
	\begin{multline}
	M_d(z;a_0,a_1,a_2)  N_d(z;b_0,b_1,b_2) \\
	=  N_d(z;b_2x_0,b_0 x_1,b_1 x_2) M_d(z;a_2x_0,a_0x_1,a_1 x_2),
	\end{multline}
	where
	\begin{equation}
	x_0=\frac{a_0+b_0}{a_2+b_2}, \quad x_1=\frac{a_1+b_1}{a_0+b_0} \quad \text{and} \quad x_2=\frac{a_2+b_2}{a_1+b_1}.
	\end{equation}
	Apply this equality to $ \phi $ pairwise six times. Each time we obtain a factor of $ N_d $ to the left, and a factor $ M_d $ to the right, namely
	\begin{multline}
	\Phi(z)^{3N} = \\
	\left( M_d(z;\alpha_0,\alpha_1,\alpha_2) N_d(z;\alpha_0,\alpha_1,\alpha_2) M_d(z;\beta_0,\beta_1,\beta_2) N_d(z;\beta_0,\beta_1,\beta_2)\right)^{3N} \\
	=  N_d(z;\alpha_0,\alpha_1,\alpha_2) N_d(z;\beta_2c_0,\beta_0c_1,\beta_1c_2) N_d(z;\alpha_2c_0,\alpha_0c_1,\alpha_1c_2) \\
	\times  N_d(z;\beta_1c_2c_0,\beta_2c_0c_1,\beta_0c_1c_0) N_d(z;\alpha_1c_2c_0,\alpha_2c_0c_1,\alpha_0c_1c_0) N_d(z;\beta_0,\beta_1,\beta_2) \\
	\times\left( M_d(z;\alpha_0,\alpha_1,\alpha_2) N_d(z;\alpha_0,\alpha_1,\alpha_2) M_d(z;\beta_0,\beta_1,\beta_2) N_d(z;\beta_0,\beta_1,\beta_2)\right)^{3(N-1)}\\
	\times M_d(z;\alpha_0,\alpha_1,\alpha_2)  M_d(z;\beta_1c_2c_0,\beta_2c_0c_1,\beta_0c_1c_0) M_d(z;\alpha_1c_2c_0,\alpha_2c_0c_1,\alpha_0c_1c_0) \\
	\times  M_d(z;\beta_2c_0,\beta_0c_1,\beta_1c_2) M_d(z;\alpha_2c_0,\alpha_0c_1,\alpha_1c_2) M_d(z;\beta_0,\beta_1,\beta_2) \\
	= A(z)\Phi(z)^{3(N-1)}B(z).
	\end{multline}
	Repeat this for a total of $ N $ times to obtain 
	\begin{equation}
	\Phi(z)^{3N} = A(z)^NB(z)^N.
	\end{equation}
	This tells us that
	\begin{equation}
	\widetilde{\phi}_{1,+}(z) = z^{2N}CB(z)^N,
	\end{equation}
	and
	\begin{equation}
	\widetilde{\phi}_{1,-}(z) = z^{-2N}A(z)^NC^{-1},
	\end{equation}
	where $ z^{\pm 2N} $ and $ C $ are such that $\widetilde \phi_+$ and $\widetilde \phi_-$ have the correct behavior  at zero respectively infinity.
	
	Now, by Theorem \ref{thm:main} 
	\begin{multline}\label{eq:section_3x2_aztec_diamond:kernel}
	\left[ K_{top}(12m, 3\xi +j; 12m', 3 \xi'+i) \right]_{i,j=0}^2 \\
	= - \frac{\chi_{m > m'}}{2\pi \i} \oint_{\gamma_\text{int}}
	\left(\phi_{a,1}(z)\phi_{a,2}(z)\phi_{a,3}(z)\phi_{a,4}(z)\right)^{3(m-m')} z^{\xi'-\xi} \frac{\d z}{z}
	\\
	+  \frac{1}{(2\pi \i)^2}
	\int_{\gamma_\text{int}}\int_{\gamma_\text{ext}} 
	\left(\phi_{a,1}(w)\phi_{a,2}(w)\phi_{a,3}(w)\phi_{a,4}(w)\right)^{3(N-m')} \widetilde \phi_{a,+}(w)^{-1} \\
	\times\widetilde \phi_{a,-}(z)^{-1} \left(\phi_{a,1}(z)\phi_{a,2}(z)\phi_{a,3}(z)\phi_{a,4}(z)\right)^{3m}  \frac{w^{\xi'+2N}}{z^{\xi+2N+1}} \frac{\d z \d w}{z-w}.
	\end{multline}
	
	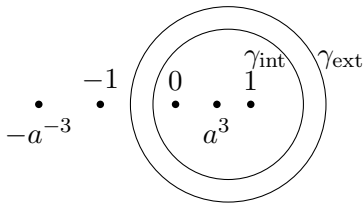
\begin{figure}[t]
		\begin{center}
			\begin{tikzpicture}[scale=1]
			\draw (.7,0) circle (1.3);
			\draw (.7,0) circle (1);
			\draw (.55,0) node[circle,fill,inner sep=1pt,label=below:$a^3$]{};
			\draw (1,0) node[circle,fill,inner sep=1pt,label=above:$1$]{};
			\draw (-1,0) node[circle,fill,inner sep=1pt,label=above:$-1$]{};
			\draw (-1.818,0) node[circle,fill,inner sep=1pt,label=below:$-a^{-3}$]{};
			\draw (0,0) node[circle,fill,inner sep=1pt,label=above:$0$]{};
			\draw (1.2,.6) node{$ \gamma_\text{int}$};
			\draw (2.2,.6) node{$ \gamma_\text{ext} $};
			\end{tikzpicture}
		\end{center}
		\caption{The contours of integration. The point $ a^3 $ is a pole to $ \phi_{a,m} $ if $ m $ is even and $ -a^{-3} $ is a singular point to $ \phi_{a,m} $ if $ m $ is odd. \label{fig:section_3x2_aztec_diamond:contours}}
	\end{figure}
	
	Since the procedure which is used to obtain $ \phi_{a,\pm} $ is continuous in the parameter $ a \in (0,1] $,  we see that $ \widetilde\phi_{a,-} \to \widetilde\phi_{1,-} $ and $ \widetilde \phi_{a,+} \to \widetilde \phi_{1,+} $ as $ a \to 1 $. Also $ \phi_{a,m} \to \phi_m $  as $ a \to 1 $. Moreover $ \phi_{a,m} $ and $ \widetilde \phi_{a,\pm} $ have poles and singularities away from $ \gamma_\text{int} $ and $ \gamma_\text{ext} $ (Figure \ref{fig:section_3x2_aztec_diamond:contours}). There is therefore no problem to take $ a \to 1 $ in \eqref{eq:section_3x2_aztec_diamond:kernel}. The result follows by taking the limit $ a \to 1 $ and using that $ \Phi(z)^{3k} = A(z)^kB(z)^k $ for all $ k $.
\end{proof}

\section{Example: Lozenge tilings of an infinite hexagon}\label{section:example_lozenge_tiling}

In the final section of this paper we turn to lozenge tilings of a hexagon.  In contrast to the Aztec diamond, lozenge tilings of the finite hexagon can not be represented as an ensemble with infinitely many paths. Instead,  we will take the limit as the vertical sides of the hexagon become infinitely large. This should be compared with the well-known fact that the lozenge tilings for the finite hexagon typically do not fall in the Schur class (which makes these models much harder to solve asymptotically \cite{BKMM,G}).

Consider a hexagon with corners at $(0,-\tfrac 12 )$, $(N-M,-\tfrac12)$, $(N,M-\tfrac 12)$, $(N,M+n-\tfrac 12)$ and $(M,M+n-\tfrac 12)$ and $(0,n-\tfrac 12)$. We tile the hexagon with the following type of lozenges 
\begin{center}
	\tikz[scale=.3]{\draw (0,0) \lozr;} \quad \tikz[scale=.3]{\draw (0,0) \lozu;}\quad
	and \ \tikz[scale=.3]{\draw (0,0) \lozd;},
\end{center}
see Figure \ref{fig:lozenge_tiling_hexagon}. We assign a weight to each tiling $\mathcal T$ as follows. Each lozenge will have an individual weight
\begin{center}
	$w\left(\tikz[scale=.3,baseline=(current bounding box.center)]{\draw (0,-.5) \lozr; \filldraw (0,-.5) circle(3pt); \draw (0,-.5) node[below] {\tiny{$(m-1,x-\frac12)$}};}
		\right)=a_{m,x},$
	 \quad  	$w\left(\tikz[scale=.3,baseline=(current bounding box.center)]{\draw (0,-.5) \lozu;  \filldraw (0,-.5) circle(3pt); \draw (0,-.5) node[below] {\tiny{$(m-1,x-\frac12)$}};}
	 \right)=b_{m,x}$ \quad
	and \ 	$w\left(\tikz[scale=.3,baseline=(current bounding box.center)]{\draw (0,-.5) \lozd;  \filldraw (0,-.5) circle(3pt); \draw (0,-.5) node[below] {\tiny{$(m-1,x-\frac12)$}};}
	\right)=c_{m,x},$
\end{center}
for some parameters $ a_{m,x},b_{m,x},c_{m,x} \in (0,\infty) $, if the bottom-left corner of the lozenge is at position $ (m,x-\frac{1}{2}) $. The  weight of a tiling $ w(\mathcal T) $ will then be defined as the product of the weights of all lozenges in the tiling. With the weights at hand, we define a probability measure in the usual way,
\begin{equation}\label{eq:section_lozenge_tiling:measure}
\mathbb P(\mathcal T) = \frac{w(\mathcal T)}{\sum_{\tilde{\mathcal T}}w(\tilde{\mathcal T})},
\end{equation}
where the sum is taken over all possible tilings.

	\begin{figure}[t]
	\begin{center}
		
		\begin{subfigure}[t]{0.45\textwidth}
			\includegraphics[scale=0.23,trim={15cm 0 13cm 0},clip]{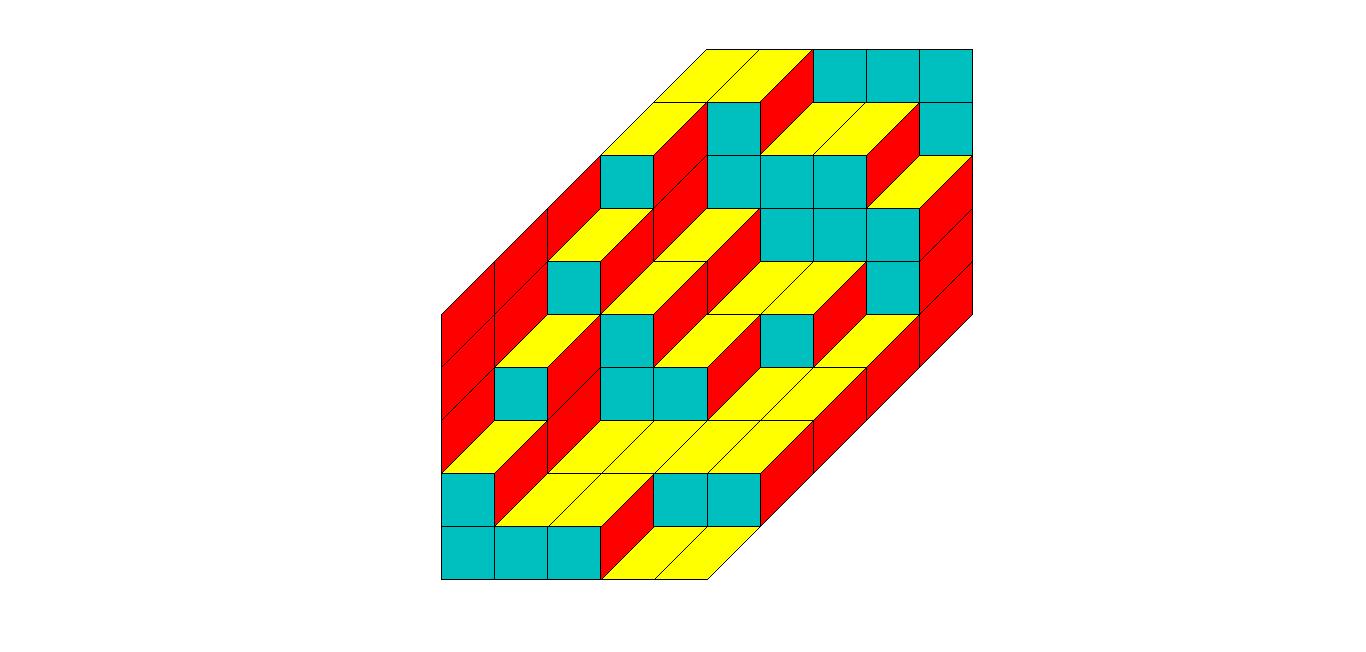}
		\end{subfigure}
		\hspace{0.5cm}
		\begin{subfigure}[t]{0.45\textwidth}
			\includegraphics[scale=0.23,trim={15cm 0 13cm 0},clip]{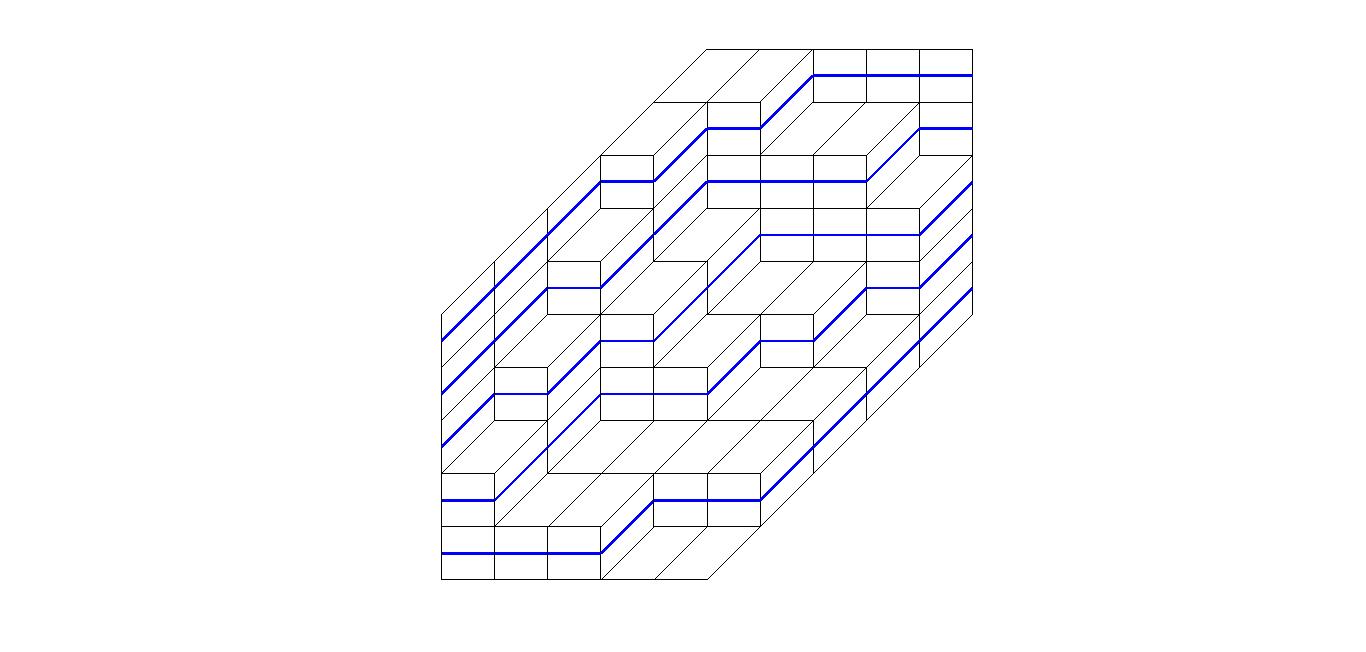}
		\end{subfigure}
		\caption{An example of a lozenge tiling of a $ 5\times 5 \times 5 $ sided hexagon. The first picture is thought of as boxes in a corner, while the other picture shows how it connect with non-intersecting paths. \label{fig:lozenge_tiling_hexagon}}
	\end{center}
\end{figure}

The weights that we put on the lozenges will be periodic.  That is, we assume that, for some $ p\geq 1 $, we have  $ a_{m,x+p}=a_{m,x} $, $ b_{m,x+p}=b_{m,x} $ and $ c_{m,x+p}=c_{m,x} $ for all $ m,x $. Assume then for simplicity that $ M $ and $ n $ are divisible by $ p $ and replace $ M $ by $ pM $ and $ n $ by $ pn $. By changing the weights on the tiles by $ a_{m,x} \mapsto \frac{a_{m,x}}{c_{m,x+1}} $ and $ b_{m,x} \mapsto \frac{b_{m,x}}{c_{m,x}} $, we may assume without loss of generality that $ c_{m,x} = 1 $ for all $ m,x $.

Each tiling give rise to a collection of non-intersecting paths as follows. Draw lines on two of the lozenges according to 
\begin{center}
	\tikz[scale=.3]{\draw (0,0) \lozr; \draw[very thick,blue] (0,.5)--(1,1.5);} \quad \tikz[scale=.3]{\draw (0,0) \lozu; \draw[very thick,blue] (0,.5)--(1,.5);}\quad
	and \ \tikz[scale=.3]{\draw (0,0) \lozd;}.
\end{center}
In this way a tiling corresponds to $ n $ non-intersecting paths on the directed graph in Figure \ref{fig:paths_lozenge_tiling_hexagon}, going from $ (0,0), (0,1),\cdots,(0,pn-1) $ to $ (N,pM), (N,pM+1),\cdots,(N,pM+pn-1) $. By this construction we obtain a natural weighting of the graph, inherited from the tiling. Set $ a_{m,x} $ on the edge going from $ (m-1,x) $ to $ (m,x+1) $ and set $ b_{m,x} $ on the edge going from $ (m-1,x) $ to $ (m,x) $. The Lindstr\"om-Gessel-Viennot Theorem then gives us, as discussed in Section \ref{sec:finite}, a probability measure on the space of all non-intersecting paths going from $ (0,0), (0,1),\cdots,(0,pn-1) $ to $ (N,pM), (N,pM+1),\cdots,(N,pM+pn-1) $. Moreover, if viewed as a measure on the tilings this  coincides with the measure \eqref{eq:section_lozenge_tiling:measure}. Hence \eqref{eq:section_lozenge_tiling:measure} is given by \eqref{eq:productofdeterminants} and \eqref{eq:endpoints} with transition matrix $ T_m $ with $ T_m(x,x) = b_{m,x} $, $ T_m(x,x+1) = a_{m,x} $ and $ T_m(x,y) = 0 $ otherwise for $ m=1,\cdots,N $. Each step corresponds to a Bernoulli step up. 

In our examples we will take $ n $, the number of paths, to infinity. As discussed in Section \ref{sec:infinite} the top paths and the bottom paths become two separated processes in the limit.

\begin{figure}[t]
\begin{center}
	\begin{tikzpicture}[scale=.5]
	\tikzset{->-/.style={decoration={
				markings,
				mark=at position .5 with {\arrow{stealth}}},postaction={decorate}}}
	\foreach \x in {0,1,...,5}
	{\draw[->-] (\x,7)--(\x+1,7);
		\foreach \y in {0,1,...,6}
		{\draw[->-] (\x,\y)--(\x+1,\y+1);
			\draw[->-] (\x,\y)--(\x+1,\y);
	}}
	
	\foreach \y in {1,2,3,4}
	{\draw (1,\y) node[circle,fill,inner sep=2pt]{};
		\draw (5,\y+2) node[circle,fill,inner sep=2pt]{};}
	\end{tikzpicture}
	\begin{tikzpicture}[scale=.5]
	\foreach \y in {0,1,...,6}
	{\draw (0,\y)--(6,\y);
		\foreach \x in {0,1,...,5}
		{\draw (\x,\y)--(\x+1,\y+1);
	}}
	{\draw (0,7)--(6,7);}
	\foreach \y in {1,2,3,4}
	{\draw (1,\y) node[circle,fill,inner sep=2pt]{};
		\draw (5,\y+2) node[circle,fill,inner sep=2pt]{};}
	\foreach \x/\y in {1/1, 1/2, 2/1, 2/4, 3/6, 4/4, 4/5, 4/6}
	{\draw[line width=1mm,blue] (\x,\y)--(\x+1,\y);
	}
	\foreach \x/\y in {1/3, 1/4, 2/2,2/5, 3/1, 3/3, 3/4, 4/2}
	{\draw[line width=1mm,blue] (\x,\y)--(\x+1,\y+1);
	}
	\end{tikzpicture}
	\caption{The directed graph on which the non-intersecting paths corresponding to a lozenge tiling of a hexagon are defined. \label{fig:paths_lozenge_tiling_hexagon}}
	\end{center}
\end{figure} 

\subsection{Lozenge tilings of the infinite hexagon $ p = 1 $}

If $ p = 1 $ the parameters do not depend on $ x $, $ b_{m,x} = b_m $ and $ a_{m,x} = a_m $. Each transition matrix $ T_m $ is then a scalar Toeplitz matrix with symbol
\begin{equation}
\phi_m(z) = a_m+b_mz.
\end{equation}

As discussed in Section \ref{sec:factorization_p1} the $ p=1 $ case is rather straightforward and will lead to the Schur measure that was introduced in \cite{O}.

	Take $N$ even and $M=N/2$. Take $ a_m = 1 $ and choose parameters $1<b_1,\ldots,b_{N/2}$ and $0<b_{N/2+1},\ldots,b_{N}<1.$
	Then the conditions in Theorem \ref{thm:main} are satisfied with 
	\begin{equation}
	\phi_+(z) = \prod_{k=\frac{N}{2}+1}^N(1+b_kz), \quad \phi_-(z) = \prod_{k=1}^{N/2}(1+b_kz),
	\end{equation}
	and we can take the limit $n \to \infty$. The point process on the line $m=N/2$ are equivalent to the Schur measure after applying the particle/hole duality (see the appendix of \cite{BOO} for more details on this principle). Indeed, the kernel for the bottom of the hexagon takes the form 
	\begin{equation}
	K(x,x')= -\frac{1}{(2\pi \i)^2}\iint_{|z|<|w|} \frac{\prod_{k=1}^\frac{N}{2}(1+b_k z) }{\prod_{k=1}^\frac{N}{2}(1+b_k w) } \frac{\prod_{k=\frac{N}{2}+1}^{N} (1+b_k w) }{\prod_{k=\frac{N}{2}+1}^{N}(1+b_{k} z) } \frac{w^{x'}}{z^{x+1}}\frac{\d z \d w}{z-w}.
	\end{equation}
	After deforming the contour for $z$ to be outside the contour for $w$, and changing $w$ and $z$ to $-w$ and $-z$, we find 
	\begin{equation}
	(-1)^{x'-x}K(x,y)=I-K_{Schur}\left(x+\frac{1}{2}-\frac{N}{2},x'+\frac{1}{2}-\frac{N}{2}\right)
	\end{equation}
	where $K_{Schur}$ is the kernel for the Schur measure as derived in \cite{O}.

Another (and perhaps more standard) way to introduce the Schur measure is by considering non-intersecting paths for which the first half transitions  are geometric jumps up and the second half are geometric jump down \cite{J02}. This in fact, leads to the point process that can be obtained from the above point process by putting a particle at each hole and removing the original particles. 
\subsection{Lozenge tilings of the infinite hexagon $ p = 2 $}

\begin{figure}[t]
	\begin{center}
		\includegraphics[scale=.50,trim={5cm 3.8cm 4cm 5cm},clip]{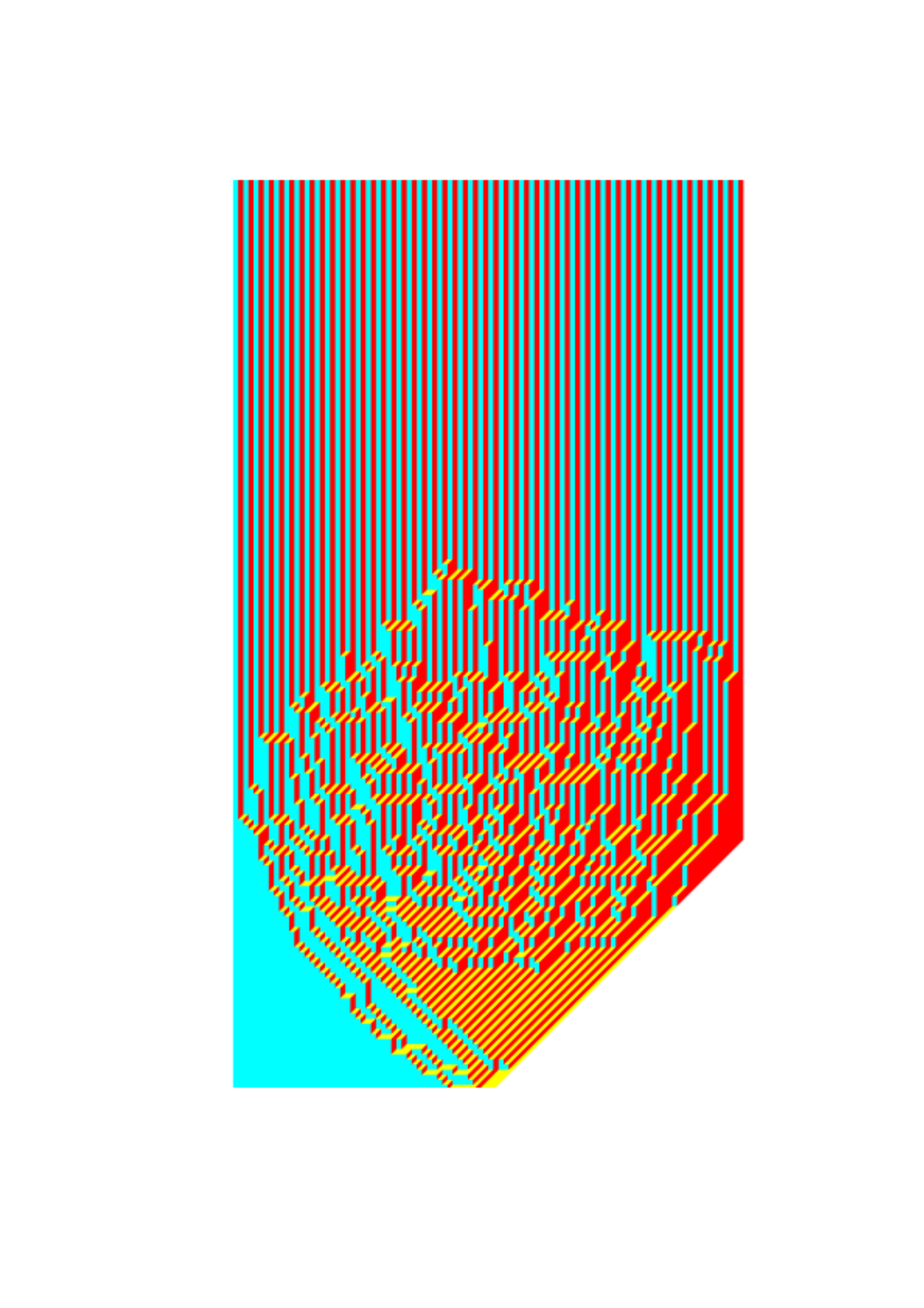}
	\end{center}
	\caption{A $ 2\times 2 $-periodic hexagon with parameters $ b=0.1 $, $c=4$, $\beta=10$, $ \gamma=0.25$ and $ a=d=\alpha=\delta=1 $. }
\end{figure}

If $ p=2 $, the values of $a_{m,x}$ and $b_{m,x}$ depend on whether $x$ is odd or even. Each transition matrix is  thus a block Toeplitz matrix, with a $ 2\times 2 $ matrix-valued symbol.  Here we will consider the case which is two-periodic also in the other direction, $ b_{m,x} = b_{m+2,x} $ and $ a_{m,x} = a_{m+2,x} $ for all $m$. Let $ a,b,c,d,\alpha,\beta,\gamma,\delta $ be positive numbers and consider the model defined by \eqref{eq:productofdeterminants}, \eqref{eq:endpoints} with $ M = N $ and with transition matrices $ T_m = T_{\phi_m} $ where the symbols $ \phi_m $ are given by
\begin{equation}
\phi_m(z) = 
\begin{pmatrix}
a & b \\
c z & d
\end{pmatrix},
\end{equation}
if $ m $ is odd and
\begin{equation}
\phi_m(z) = 
\begin{pmatrix}
\alpha & \beta \\
\gamma z & \delta
\end{pmatrix},
\end{equation}
if $ m $ is even, $ m=1,2,\cdots,4N $.  Then
\begin{equation}\label{eq:2x2-periodic_weight}
\phi(z) = \left(
\begin{pmatrix}
a & b \\
c z & b
\end{pmatrix}
\begin{pmatrix}
\alpha & \beta \\
\gamma z & \delta
\end{pmatrix}
\right)^{2N}.
\end{equation}
 A restriction we impose on the parameters is that
\begin{equation}
\frac{bc}{ad} \neq \frac{\beta\gamma}{\alpha\delta},
\end{equation}
and for simplicity we assume that $ \frac{\beta\gamma}{\alpha\delta} < 1 < \frac{bc}{ad} $. 

As $n \to \infty$ we obtain new determinantal point processes for which the correlation structure is described in the following theorem. This leads to a family of models that, to the best of our knowledge, have not been investigated in the literature before.
\begin{theorem}
	Consider the $ 2\times 2 $-periodic lozenge tilings of a hexagon defined above. As $ n \to \infty $ the bottom part of the hexagon converges to a determinantal point process with correlation kernel
	\begin{multline}
	\left[ K(4m, 2x+j; 4m', 2x'+i) \right]_{i,j=0}^{1} \\
	= - \frac{\chi_{m > m'}}{2\pi \i} \oint_{\gamma}A(z)^{m-m'}B(z)^{m-m'}z^{x'-x} \frac{\d z}{z}
	\\
	-  \frac{1}{(2\pi \i)^2}
	\int_{\gamma_{0,\frac{ad}{bc}}}\int_{\gamma_0} 
	A(w)^{N-m'}B(w)^{-m'}A(z)^{m-N}B(z)^m  \frac{w^{x'}}{z^{x+1}} \frac{\d z \d w}{z-w}, \\
	\qquad x,x' \in \mathbb Z, \, 0 < m, m' < N.
	\end{multline}
	where
	\begin{equation}
	A(z) = 
	\begin{pmatrix}
	x^\frac{1}{2} & 0 \\
	0 & x^{-\frac{1}{2}}
	\end{pmatrix}
	\begin{pmatrix}
	\alpha & \gamma \\
	\beta z &\delta
	\end{pmatrix}
	\begin{pmatrix}
	y^\frac{1}{2} & 0 \\
	0 & y^{-\frac{1}{2}}
	\end{pmatrix}
	\begin{pmatrix}
	\alpha & \beta \\
	\gamma z &\delta
	\end{pmatrix},
	\end{equation}
	\begin{equation}
	B(z) = 
	\begin{pmatrix}
	a & b \\
	c z & b 
	\end{pmatrix}
	\begin{pmatrix}
	y^{-\frac{1}{2}} & 0 \\
	0 & y^\frac{1}{2}
	\end{pmatrix}
	\begin{pmatrix}
	a & c \\
	b z & b 
	\end{pmatrix}
	\begin{pmatrix}
	x^{-\frac{1}{2}} & 0 \\
	0 & x^\frac{1}{2}
	\end{pmatrix},
	\end{equation}
	\begin{equation}
	x=\frac{a\beta + b \delta}{c \alpha+d\gamma}, \qquad y=\frac{d\gamma + c \delta}{b \alpha+ a\beta},
	\end{equation}
	and $ \gamma_0 $ is a curve around zero and $ \gamma_{0,\frac{ad}{bc}} $ is a curve around $ \gamma_0 $ and $ \frac{ad}{bc} $.
\end{theorem}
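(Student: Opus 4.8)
The plan is to read the kernel off the $K_{bottom}$ formula of Theorem~\ref{thm:main}, so that the whole task reduces to producing the factorization $\phi=\phi_+\phi_-$ of the weight \eqref{eq:2x2-periodic_weight} in closed form. A useful point to record at the outset is that, unlike in the Aztec examples, no regularizing parameter is needed here: each symbol $\phi_m$ is a Bernoulli step up, hence an entire (polynomial) matrix with no poles, and its determinant $ad-bcz$ (for $m$ odd) or $\alpha\delta-\beta\gamma z$ (for $m$ even) vanishes only at $ad/(bc)$ respectively $\alpha\delta/(\beta\gamma)$. The standing assumption $\tfrac{\beta\gamma}{\alpha\delta}<1<\tfrac{bc}{ad}$ places these zeros strictly inside and strictly outside the unit circle, so that $\phi$ and $\phi^{-1}$ are analytic on an annulus about $|z|=1$ and the hypotheses of Theorem~\ref{thm:main} hold verbatim. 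First I would compute the winding number of $\det\phi=\big((ad-bcz)(\alpha\delta-\beta\gamma z)\big)^{2N}$, which equals $2N=pM$ since each copy of $\det\Phi$ contributes a single zero inside the disk; by Theorem~\ref{thm:p-periodic_admissible} the factorization therefore exists, and it remains to make it explicit.

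To construct it I would run the constructive scheme of Section~\ref{sec:method_factorization} on $\phi=\Phi^{2N}$ with $\Phi=\phi_1\phi_2$. The even symbol has its determinantal zero outside the disk, so its inverse is singular outside and it belongs in the left factor $\phi_+$; the odd symbol is singular (through its inverse) inside and belongs in the right factor $\phi_-$. I would repeatedly apply the Bernoulli switching rule \eqref{eq:2p_switching_rule_bernoulli} in the brick-wall pattern, moving all even-type factors to the left and all odd-type factors to the right. Each switch of a pair $\phi_1\phi_2$ (and of the shifted pairing $\phi_2\phi_1$) emits a diagonal scaling; these scalings are exactly the scalars $x=\tfrac{a\beta+b\delta}{c\alpha+d\gamma}$ and $y=\tfrac{d\gamma+c\delta}{b\alpha+a\beta}$ and the conjugations $\diag(x^{\pm1/2})$, $\diag(y^{\pm1/2})$ appearing in $A(z)$ and $B(z)$.

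The crux, and the step I expect to be the main obstacle, is to show that the switching stabilizes in the sense of \eqref{eq:finite_steps} with period $q=2$ and that the stabilized factors are precisely the $A(z)$ and $B(z)$ of the statement. Concretely I would establish the closed identity $\Phi(z)^{2N}=A(z)^N B(z)^N$ — equivalently the one-step relation $\Phi^{2N}=A\,\Phi^{2(N-1)}B$, iterated exactly as in \eqref{eq:closed_formula} — and verify that $A,A^{-1}$ are analytic inside the disk, while (after converting the residual up-type odd factors into down-type via Lemma~\ref{lem:mixetatheta}, which is where the overall $z^{M}=z^N$ and the shift operators enter) $\phi_-$ and $\phi_-^{-1}$ are analytic outside. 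Setting $\phi_+=A^N C$ and choosing the constant $C$ together with the accumulated shift so that $\phi_-\sim I_2 z^{M}$ as $z\to\infty$ completes the factorization \eqref{eq:WHfact}. The bulk of the work is the parameter bookkeeping through the brick wall: tracking how $x$ and $y$ propagate and confirming that the emitted factors coincide with the explicit $A,B$, including that $\det A=(\alpha\delta-\beta\gamma z)^2$ and $\det B=(ad-bcz)^2$, so that $B^{-1}$ has poles only at $ad/(bc)$.

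Finally I would substitute into $K_{bottom}$. Writing $\phi_{4m',4N}(w)=\Phi(w)^{2(N-m')}=A(w)^{N-m'}B(w)^{N-m'}$ and $\phi_{0,4m}(z)=\Phi(z)^{2m}=A(z)^{m}B(z)^{m}$, and using $\phi_+^{-1}\propto A^{-N}$, $\phi_-^{-1}\propto B^{-N}$ (up to the shift and the constant $C$), the middle products telescope to $A(w)^{N-m'}B(w)^{-m'}$ and $A(z)^{m-N}B(z)^{m}$, while the $z^{\pm N}$ coming from the normalization cancel against a shift of the exponents, leaving the clean weights $w^{x'}/z^{x+1}$. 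The single-integral term is immediate from $\phi_{4m',4m}=\Phi^{2(m-m')}=A^{m-m'}B^{m-m'}$. The contours follow from the analyticity regions and from matching the circles $|z|<|w|$ of Theorem~\ref{thm:main}: $z$ runs on a small loop $\gamma_0$ around $0$ only, and $w$ runs on $\gamma_{0,ad/bc}$ enclosing both $\gamma_0$ and the pole of $B^{-m'}$ at $ad/(bc)$.
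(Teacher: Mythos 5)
Your proposal is correct and matches the paper's proof essentially step for step: no regularizing parameter is needed, the winding-number computation (winding number $2N = pM$) makes Theorem \ref{thm:p-periodic_admissible} apply directly, the explicit factorization $\phi_+ = A^N C$, $\phi_- = C^{-1}B^N$ comes from iterating the switching rule \eqref{eq:2p_switching_rule_bernoulli} exactly through the one-step relation $\Phi^{2N} = A\,\Phi^{2(N-1)}B$, and the kernel is then read off from $K_{bottom}$ in Theorem \ref{thm:main} using $\Phi^{2k}=A^kB^k$, with the contours fixed by the location of the zeros of $\det A$ and $\det B$. The only superfluous element is your appeal to Lemma \ref{lem:mixetatheta} and the shift operators: since $B(z)^N$ already behaves like $z^N$ times an invertible constant matrix as $z\to\infty$ (and $B^{-N}$ has its poles only at $ad/(bc)$, inside the disk), the paper obtains the required normalization $\phi_-\sim z^N I_2$ with a constant matrix $C$ alone, and no conversion of up-steps into down-steps is needed.
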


\begin{remark}
 The matrix-valued functions $ A $ and $ B $ commute, which  simplifies  an asymptotic analysis in the limit $ N\to \infty$, as it is possible to simultaneously diagonalize the factors in the integrand.
\end{remark}
\begin{remark}
	We could equally well consider the top part of the hexagon and obtain a limiting  process there. 
\end{remark}
\begin{remark}
	It is worth noting here that if $ M \neq N $, then the winding number of $ \det \phi $ is not equal to $ 2M $ and Theorem  \ref{thm:p-periodic_admissible} does not apply.
\end{remark}
The proof of this theorem is rather straightforward compared with the proofs in Section \ref{section:example_aztec_diamond}. It is not necessary to introduce an extra parameter, and we do not need to go to the eigenvalues.  

\begin{proof}
	Since the winding number of $ \det \phi $ is $ 2N $ and it does not have zeros or poles on the unit circle, Theorem \ref{thm:p-periodic_admissible} directly applies. To get an explicit formula in Theorem \ref{thm:main} we need to obtain the factorization $ \phi = \phi_+\phi_- $ (since we consider the bottom part) explicitly.
	
	The reason we can obtain an explicit formula is that $ \phi $ is of the form such that \eqref{eq:finite_steps} is true. To see this, note first that \eqref{eq:2p_switching_rule_bernoulli} can be written as	 
	\begin{equation}
	\begin{pmatrix}
	a & b \\
	c z & d 	 
	\end{pmatrix}
	\begin{pmatrix}
	\alpha & \beta \\
	\gamma z & \delta 	 
	\end{pmatrix}
	=
	\begin{pmatrix}
	x^\frac{1}{2} & 0 \\
	0 & x^{-\frac{1}{2}}
	\end{pmatrix}
	\begin{pmatrix}
	\alpha & \gamma \\
	\beta z & \delta
	\end{pmatrix}
	\begin{pmatrix}
	a & c \\
	b z & d	 
	\end{pmatrix}
	\begin{pmatrix}
	x^{-\frac{1}{2}} & 0 \\
	0 & x^\frac{1}{2}
	\end{pmatrix}.
	\end{equation}
	Apply this equality pairwise two times, with $ b $ and $ c $ interchanged and $ \gamma $ and $ \beta $ interchanged the second time, to obtain
	\begin{multline}
	\left(\begin{pmatrix}
	a & b \\
	c z & d 	 
	\end{pmatrix}
	\begin{pmatrix}
	\alpha & \beta \\
	\gamma z & \delta 	 
	\end{pmatrix}\right)^{2N}
	= \\
	\begin{pmatrix}
	x^\frac{1}{2} & 0 \\
	0 & x^{-\frac{1}{2}}
	\end{pmatrix}
	\begin{pmatrix}
	\alpha & \gamma \\
	\beta z & \delta
	\end{pmatrix}
	\begin{pmatrix}
	y^\frac{1}{2} & 0 \\
	0 & y^{-\frac{1}{2}}
	\end{pmatrix}
	\begin{pmatrix}
	\alpha & \beta \\
	\gamma z & \delta
	\end{pmatrix} \\
	\times\left(\begin{pmatrix}
	a & b \\
	c z & d 	 
	\end{pmatrix}
	\begin{pmatrix}
	\alpha & \beta \\
	\gamma z & \delta 	 
	\end{pmatrix}\right)^{2(N-1)} \\
	\times
	\begin{pmatrix}
	a & b \\
	c z & d	 
	\end{pmatrix}
	\begin{pmatrix}
	y^{-\frac{1}{2}} & 0 \\
	0 & y^\frac{1}{2}
	\end{pmatrix}
	\begin{pmatrix}
	a & c \\
	b z & d	 
	\end{pmatrix}
	\begin{pmatrix}
	x^{-\frac{1}{2}} & 0 \\
	0 & x^\frac{1}{2}
	\end{pmatrix}.
	\end{multline}
	Repeat this for a total of $ N $ times to obtain the factorization $ \phi(z) = \phi_+(z)\phi_-(z) $ with 
	\begin{equation}
	\phi_+(z) = A(z)^NC,
	\end{equation}
	and 
	\begin{equation}
	\phi_-(z) = C^{-1}B(z)^N,
	\end{equation}
	where $ C $ is a normalizing factor.
	
	Using the above factorization and that
	\begin{equation}
	\left(\begin{pmatrix}
	a & b \\
	c z & d 	 
	\end{pmatrix}
	\begin{pmatrix}
	\alpha & \beta \\
	\gamma z & \delta 	 
	\end{pmatrix}\right)^{2k}
	= A(z)^kB(z)^k,
	\end{equation}
	in Theorem \ref{thm:main} give the correlation kernel in the statement.
\end{proof}

\end{document}